\pgfplotsset{compat=newest}
\newcounter{rowcntr}[table]
\renewcommand{\therowcntr}{\alph{rowcntr})}
\newcolumntype{N}{>{\refstepcounter{rowcntr}\therowcntr}c}
\newtheorem{theorem}{Theorem}
\newtheorem{lemma}[theorem]{Lemma}
\newtheorem{corollary}[theorem]{Corollary}
\newtheorem{proposition}[theorem]{Proposition}
\newtheorem{remark}[theorem]{Remark}
\newcommand\sign{\operatorname{sign}}
\newcommand\etaDPG{\eta_\mathrm{DPG}}
\newcommand\etaLSQ{\eta_\mathrm{LS}}
\newcommand\osc{\mathrm{osc}}
\def\evmin{\lambda_{\rm min}}
\def\evmax{\lambda_{\rm max}}
\newcommand{\ZZ}{\boldsymbol{Z}}
\newcommand{\ip}[2]{(#1\hspace*{.5mm},#2)}
\newcommand{\dual}[2]{\langle#1\hspace*{.5mm},#2\rangle}
\newcommand{\norm}[3][]{#1\|#2#1\|_{#3}}
\def\div{{\rm div\,}}
\def\Div{{\mathbf{Div}\,}}
\newcommand{\Hdivset}[1]{\boldsymbol{H}(\div;#1)}
\newcommand{\HdivDivset}[1]{\boldsymbol{H}(\div\Div;#1)}
\newcommand{\HdivDivNset}[1]{\boldsymbol{H}_0(\div\Div;#1)}
\newcommand{\set}[2]{\big\{#1\,:\,#2\big\}}
\newcommand{\R}{\ensuremath{\mathbb{R}}}
\newcommand{\N}{\ensuremath{\mathbb{N}}}
\newcommand{\LL}{\ensuremath{\boldsymbol{L}}}
\newcommand{\vv}{\ensuremath{\boldsymbol{v}}}
\newcommand{\ww}{\ensuremath{\boldsymbol{w}}}
\newcommand{\TT}{\ensuremath{\mathcal{T}}}
\newcommand{\cS}{\ensuremath{\mathcal{S}}}
\newcommand{\PP}{\ensuremath{\mathcal{P}}}
\newcommand{\EE}{\ensuremath{\mathcal{E}}}
\newcommand{\normal}{\ensuremath{{\boldsymbol{n}}}}
\newcommand{\tangential}{\ensuremath{{\boldsymbol{t}}}}
\newcommand{\VV}{\ensuremath{\mathcal{V}}}
\newcommand{\GG}{\ensuremath{\boldsymbol{G}}}
\newcommand{\CC}{\ensuremath{\mathcal{C}}}
\newcommand\dDiv{\operatorname{div\mathbf{Div}}}
\newcommand\DD{\mathcal{D}}
\newcommand{\uu}{\boldsymbol{u}}
\newcounter{constantsnumber}
\def\setc#1{
  \ifthenelse{\equal{#1}{poinc}}{C_{\rm edge}}{ % estimating edge contributions
   \refstepcounter{constantsnumber}
   \label{const#1}C_{\theconstantsnumber}}}
\def\c#1{
  \ifthenelse{\equal{#1}{poinc}}{C_{\rm edge}}{ % estimating edge contributions
    C_{\ref{const#1}}}}
\newcommand{\MM}{\boldsymbol{M}}
\newcommand{\QQ}{\boldsymbol{Q}}
\newcommand{\tr}{\operatorname{tr}}
\newcommand{\trdDiv}{\tr^\mathrm{dDiv}}
\newcommand{\trHtwo}{\tr^2}
\begin{document}

% Title and Authors
\title{Ultraweak formulation of linear PDEs in \\nondivergence form and DPG approximation}
\date{\today}

\author{Thomas F\"{u}hrer}
\address{Facultad de Matem\'{a}ticas, Pontificia Universidad Cat\'{o}lica de Chile, Santiago, Chile}
\email{tofuhrer@mat.uc.cl}

\thanks{{\bf Acknowledgment.} 
This work was supported by FONDECYT project 11170050.}

\keywords{DPG method, ultraweak formulation, Cordes coefficients}
\subjclass[2010]{65N30, % FEM, Galerkin
                 65N12} % Stability and convergence of numerical methods
%%%%%%%%%%%%%%%%%%%%%%%%%%%%%%%%%%%%%%%%%%%%%%%%%%%%%%%%%%%%%%%%%%%%%
% ABSTRACT
\begin{abstract}
We develop and analyze an ultraweak formulation of linear PDEs in nondivergence form where the coefficients satisfy the
Cordes condition.
Based on the ultraweak formulation we propose discontinuous Petrov--Galerkin (DPG) methods.
We investigate Fortin operators for the fully discrete schemes and provide a posteriori estimators for the methods under consideration.
Numerical experiments are presented in the case of uniform and adaptive mesh-refinement.
\end{abstract}
%%%%%%%%%%%%%%%%%%%%%%%%%%%%%%%%%%%%%%%%%%%%%%%%%%%%%%%%%%%%%%%%%%%%%
% Make Title
\maketitle
%%%%%%%%%%%%%%%%%%%%%%%%%%%%%%%%%%%%%%%%%%%%%%%%%%%%%%%%%%%%%%%%%%%%%

%===================================================================================================
% INTRODUCTION
%===================================================================================================
\section{Introduction}
Let $\Omega \subset \R^d$ ($d=2,3$) be a bounded convex polytopal domain with boundary $\Gamma:=\partial\Omega$.
We consider the problem of finding the solution $u$ to the PDE
\begin{align}\label{eq:model}
\begin{split}
  \mathcal{L}u &= f \quad\text{in } \Omega, \\
  u|_\Gamma &= 0,
\end{split}
\end{align}
where 
\begin{subequations}
\begin{align}
  \mathcal{L}u(x) := \sum_{j,k=1}^d A_{jk}(x)\frac{\partial^2 u}{\partial x_j\partial x_k}(x)
\end{align}
and $A\colon \Omega \to \R^{d\times d}_\mathrm{sym}$ satisfies
\begin{align}\label{eq:cond:ell}
  0 < \operatorname{ess\inf}_{x\in\Omega}\evmin(x) \leq \operatorname{ess\sup}_{x\in\Omega} \evmax(x) < \infty.
\end{align}
and the Cordes condition: There exists $0<\varepsilon\leq 1$ such that
\begin{align}\label{eq:cond:cord}
  \frac{\norm{A}F^2}{(\tr A)^2} := \frac{\sum_{j,k=1}^d A_{jk}^2}{\big(\sum_{j=1}^d A_{jj}\big)^2} \leq \frac{1}{d-1+\varepsilon}
  \quad\text{a.e. in }\Omega.
\end{align}
\end{subequations}
It is known that the problem admits a unique strong solution $u\in H^2(\Omega)\cap H_0^1(\Omega)$, see
e.g.,~\cite{SmearsSueli13} and references therein.
Let us also note that for $d=2$ condition~\eqref{eq:cond:ell} implies~\eqref{eq:cond:cord}.

In recent years various numerical methods for problem~\eqref{eq:model} have been proposed.
One of the first works dealing with finite element schemes is by Smears \& S\"{u}li who defined and analyzed a
discontinuous Galerkin finite element method (DG-FEM)~\cite{SmearsSueli13}.
Gallistl defined and analyzed a non-symmetric mixed FEM and a least-squares finite element method (LS-FEM)
in~\cite{Gallistl17NONDIV}.
The mixed formulation is based on the theory of stable splittings of polyharmonic equations developed
in~\cite{GallistlPOLY}, where the problem to solve decouples into a Stokes-type problem and
subproblems allowing $H^1$ conforming discretizations.
Extensions to nonlinear problems of the latter works are found in~\cite{SmearsSueli14,SmearsSueli16,GallistlSueli19}.
Other contributions include~\cite{FengHenningsNeilan17,FengNeilanSchnake18,LakkisPryer11,weakGalerkin}.

As pointed out in the articles cited above, efficient numerical schemes to approximate solutions of~\eqref{eq:model}
are necessary since problems of type~\eqref{eq:model} appear when linearising fully nonlinear problems like the
Hamilton--Jacobi--Bellman equations. We refer the interested reader to~\cite{SmearsSueli14}.

In the present work we relax~\eqref{eq:model}, i.e., $A:D^2u = f$, by introducing an auxiliary variable $\MM = D^2u$.
The problem is thus recast to the system
\begin{align*}
  A:\MM &= f,\\
  \MM-D^2 u &= 0.
\end{align*}
The second equation of this system will be considered in a very weak (\emph{ultraweak}) sense by testing with
discontinuous functions and then shifting the derivatives applied to $u$ to the test functions.
This approach needs an additional trace variable that carries the continuity information of the solution.
Let us point out some properties of our approach: 
The symmetric matrix $\MM$ has coefficients in $L^2(\Omega)$ and we can therefore simply approximate them with
discontinuous functions, e.g., piecewise polynomials. 
Since in the ultraweak setting no derivatives are applied to $u$ we can approximate it also with discontinuous functions.
For the approximation of the trace variable we use traces of $H^2(\Omega)$ functions.
For the numerical examples ($d=2$) we use traces of the reduced Hsiegh--Clough--Tocher (rHCT) elements.
We stress that, following~\cite{KLove1}, for the analysis and implementation we only need to know the traces of such
functions, which in the case of rHCT elements are polynomials, but there is no need to know their 
presentation in the interior.
This is a particular advantage compared to other methods which use $H^2(\Omega)$ conforming discretization spaces, e.g.,
the LS-FEM from~\cite{Gallistl17NONDIV}. 

Applying the DPG methodology of Demkowicz \& Gopalakrishnan~\cite{partI,partII,DPGoverview} to the ultraweak formulation
gives us automatically: Algebraic systems that are symmetric and positive definite and local error indicators to steer adaptive mesh-refinement.

\textbf{Outline.}
Section~\ref{sec:varforms} introduces the notation, functional analytical setting, and the definition of the DPG methods
together with the main results.
The proofs are postponed to Section~\ref{sec:proofs}.
In Section~\ref{sec:aposteriori} we analyze error estimators. Fortin operators for the fully discrete schemes of the proposed methods are
considered in Section~\ref{sec:fullydiscrete}.
Finally, numerical experiments are given in Section~\ref{sec:examples}.

\section{Variational formulations and DPG methods}\label{sec:varforms}

\subsection{Notation}
For any subdomain $\omega \subseteq \Omega$ we use the common notation $L^2(\omega)$ for square integrable functions and $H^k(\omega)$
for Sobolev spaces of order $k$. 
Particularly, $H_0^1(\omega)$ denotes the space with vanishing traces and $H_0^2(\omega)$
denotes the space of $H^2(\omega)$ functions with vanishing traces and vanishing traces of the gradient.
The $L^2(\omega)$ inner product is denoted by $\ip\cdot\cdot_\omega$ and if $\omega=\Omega$ we skip the index, i.e.,
$\ip\cdot\cdot_\Omega := \ip\cdot\cdot$.
The norm induced by the $L^2(\omega)$ inner product is denoted with $\norm\cdot{\omega}$ and if $\omega=\Omega$ we skip the
index as before, i.e., $\norm\cdot\Omega = \norm\cdot{}$.
We will also work with
\begin{align*}
  \LL_\mathrm{sym}^2(\omega) = \set{\QQ\in L^2(\omega)^{d\times d}}{\QQ = \QQ^\top},
\end{align*}
i.e. symmetric matrices with $L^2(\omega)$ coefficients.
The inner product and norm are denoted with the same symbols as in the scalar case, for instance,
\begin{align*}
  \ip{\MM}{\QQ}_\omega = \int_\omega \MM : \QQ \,dx,
\end{align*}
where the colon operator stands for the Frobenius product.

For a function $v\in H^2(\omega)$ the Hessian is denoted by $D^2 v \in \LL_\mathrm{sym}^2(\omega)$. 
The norm of a Sobolev function is given by $\norm{v}{H^k(\omega)}^2 = \norm{v}\omega^2 + \norm{D^k v}{\omega}^2$.

We will also use the
(formally) $L^2$ adjoint operator of $D^2$ denoted by $\dDiv$, i.e., the double iterated divergence, where $\div$ denotes the standard divergence
operator and $\Div$ denotes the row-wise divergence operator.
We define the space $\HdivDivset\omega$ as the completion of $\DD(\overline\omega)^{d\times d}\cap \LL_\mathrm{sym}^2(\omega)$ with respect to the norm
\begin{align*}
  \norm{\cdot}{\HdivDivset\omega} = \sqrt{\norm{\cdot}{\omega}^2 + \norm{\div\Div(\cdot)}\omega^2}.
\end{align*}

Let $\TT$ denote a partition of the domain $\Omega$ into non-intersecting open Lipschitz subdomains $T$ with positive measure, i.e.,
$\bigcup_{T\in\TT} \overline T = \overline\Omega$ and $|T|>0$.
We define the product spaces
\begin{align*}
  H^k(\TT) &:= \prod_{T\in\TT} H^k(T), \\
  \HdivDivset\TT &:= \prod_{T\in\TT} \HdivDivset{T}.
\end{align*}
Clearly, these spaces can be identified as subspaces of $L^2(\Omega)$ and $\LL_\mathrm{sym}^2(\Omega)$, respectively.
The norms are given by
\begin{align*}
  \norm{\cdot}{H^k(\TT)} &= \sqrt{\sum_{T\in\TT} \norm{\cdot}{H^k(T)}^2}, \\
  \norm{\cdot}{\HdivDivset\TT} &= \sqrt{\sum_{T\in\TT} \norm{\cdot}{\HdivDivset{T}}^2}.
\end{align*}

For the definition of the bilinear form associated to the ultraweak formulation we will use the short notation
\begin{align*}
  \ip{u}{\div\Div\QQ}_\TT = \sum_{T\in\TT} \ip{u}{\div\Div\QQ}_T \quad\forall u\in L^2(\Omega),\, \QQ\in\HdivDivset\TT.
\end{align*}

Throughout, if not stated otherwise, $C$ (probably with an additional index) denotes a generic constant that depends on
$\Omega$ and the coefficient matrix $A$. We write $a\lesssim b$ if $a\leq C\,b$ and $a\simeq b$ if $a\lesssim b$ and
$b\lesssim a$.

\subsection{Strong form}
Let us note that by testing the PDE in~\eqref{eq:model} with $\gamma \Delta v$ where $v\in X:= H^2(\Omega)\cap H_0^1(\Omega)$ 
and $\gamma(x) := \tr A(x) / \norm{A(x)}F^2$ we obtain the
variational formulation 
\begin{align}\label{eq:varform:strong}
  u \in X : \quad
  \ip{A:D^2u}{\gamma \Delta v} = \ip{f}{\gamma \Delta v} \quad\forall v\in X.
\end{align}
It is shown in~\cite{SmearsSueli13} that this problem admits a unique solution.
More precisely, the bilinear form defined by the left-hand side is bounded and coercive,
\begin{align*}
  |\ip{A:D^2u}{\gamma\Delta v}|&\lesssim \norm{u}{H^2(\Omega)}\norm{v}{H^2(\Omega)}, \\
  \norm{u}{H^2(\Omega)}^2 &\lesssim \ip{A:D^2 u}{\gamma\Delta u}
\end{align*}
for all $u,v\in X$, see~\cite[Proof of Theorem~3]{SmearsSueli13}.
\begin{proposition}
  Let $f\in L^2(\Omega)$. Problem~\eqref{eq:varform:strong} admits a unique solution $u \in X$ and satisfies
  \begin{align*}
    \norm{u}{H^2(\Omega)} \leq C \norm{f}{}.
  \end{align*}
\end{proposition}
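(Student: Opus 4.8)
The plan is a direct application of the Lax--Milgram lemma on the Hilbert space $X=H^2(\Omega)\cap H_0^1(\Omega)$ equipped with the norm $\norm\cdot{H^2(\Omega)}$; note that $X$ is a closed subspace of $H^2(\Omega)$ and hence itself a Hilbert space. The bilinear form $b(u,v):=\ip{A:D^2u}{\gamma\Delta v}$ is bounded and coercive on $X\times X$ by the two inequalities quoted from~\cite[Proof of Theorem~3]{SmearsSueli13} just above the statement, so the only point left to verify is that the right-hand side $F(v):=\ip{f}{\gamma\Delta v}$ defines a bounded linear functional on $X$.

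For the latter I would first record that the weight $\gamma=\tr A/\norm{A}F^2$ belongs to $L^\infty(\Omega)$: one has $\tr A\le\sqrt d\,\norm{A}F$ pointwise, while $\norm{A}F^2\ge\evmax^2\ge(\operatorname{ess\inf}_{x\in\Omega}\evmin(x))^2>0$ by~\eqref{eq:cond:ell}, hence $0<\gamma\le\sqrt d/\operatorname{ess\inf}_{x\in\Omega}\evmin(x)$ a.e.\ in $\Omega$. Combined with $\norm{\Delta v}{}\le\sqrt d\,\norm{D^2v}{}$ this gives $|F(v)|\le\norm\gamma{L^\infty(\Omega)}\norm f{}\norm{\Delta v}{}\lesssim\norm f{}\norm v{H^2(\Omega)}$. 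The Lax--Milgram lemma then provides a unique $u\in X$ solving~\eqref{eq:varform:strong}. Testing this identity with $v=u$ and using coercivity together with the bound on $F$ yields $\norm u{H^2(\Omega)}^2\lesssim b(u,u)=F(u)\lesssim\norm f{}\,\norm u{H^2(\Omega)}$, and dividing by $\norm u{H^2(\Omega)}$ gives the a priori estimate with $C$ depending only on $\Omega$ and $A$.

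I do not expect a genuine obstacle here: all the analytic substance --- boundedness and, in particular, coercivity of $b$, the latter resting on the Cordes condition~\eqref{eq:cond:cord} together with the Miranda--Talenti estimate $\norm{D^2v}{}\le\norm{\Delta v}{}$ valid on convex domains --- is already contained in~\cite{SmearsSueli13} and has been imported into the excerpt. The remaining steps (completeness of $X$, the uniform bound on $\gamma$, and the one-line energy argument for the a priori bound) are routine.
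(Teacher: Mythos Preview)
Your proposal is correct and follows exactly the route the paper intends: the paper does not spell out a separate proof but simply records boundedness and coercivity of the bilinear form (citing \cite{SmearsSueli13}) and states the proposition as an immediate consequence, which is precisely the Lax--Milgram argument you write out in detail, including the routine check that $\gamma\in L^\infty(\Omega)$ so that the right-hand side is a bounded functional.
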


Moreover, observe that $\Delta X = L^2(\Omega)$ since $\Omega$ is convex, hence, 
solutions of~\eqref{eq:varform:strong} are the (strong) solution of~\eqref{eq:model} ($\gamma$ is a positive, essentially
bounded weight function).

We recall that
\begin{align*}
  \norm{\Delta v}{} \simeq \norm{D^2 v}{} \simeq \norm{v}{H^2(\Omega)} \quad\forall v\in X.
\end{align*}
Proofs for the estimate $\norm{D^2 v}{} \lesssim \norm{\Delta v}{}$ are found in~\cite{grisvard}.

The properties of the bilinear form in~\eqref{eq:varform:strong} also imply the following result which we use
in our analysis below:
\begin{lemma}\label{lem:blfa}
  The bilinear form $a(u,w):=\ip{A:D^2u}{w}$ is bounded and satisfies the $\inf$--$\sup$ conditions on $X\times
  L^2(\Omega)$.
\end{lemma}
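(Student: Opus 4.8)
The plan is to recognize that the operator $u\mapsto A:D^2u$ (which coincides with the PDE operator $\mathcal{L}$) is a bounded isomorphism from $X$ onto $L^2(\Omega)$, and then to read off the two inf--sup conditions from the representation $a(u,w)=\ip{A:D^2u}{w}$ together with the elementary identity $\sup_{0\neq w\in L^2(\Omega)}\ip{g}{w}/\norm{w}{}=\norm{g}{}$ valid for $g\in L^2(\Omega)$. Boundedness of $a$ is immediate: by Cauchy--Schwarz and the essential boundedness of $A$ from~\eqref{eq:cond:ell}, one has $|a(u,w)|\le\norm{A:D^2u}{}\norm{w}{}\lesssim\norm{D^2u}{}\norm{w}{}\simeq\norm{u}{H^2(\Omega)}\norm{w}{}$ for all $u\in X$, $w\in L^2(\Omega)$.

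For the first inf--sup condition I would fix $0\neq u\in X$ and test with $w:=\gamma\Delta u\in L^2(\Omega)$, where $\gamma=\tr A/\norm{A}F^2$ is the weight from~\eqref{eq:varform:strong}. Since $\gamma$ is essentially bounded, $\norm{\gamma\Delta u}{}\lesssim\norm{\Delta u}{}\lesssim\norm{u}{H^2(\Omega)}$, while the coercivity estimate recalled before Lemma~\ref{lem:blfa}, namely $\norm{u}{H^2(\Omega)}^2\lesssim\ip{A:D^2u}{\gamma\Delta u}$, gives $a(u,\gamma\Delta u)=\ip{A:D^2u}{\gamma\Delta u}\gtrsim\norm{u}{H^2(\Omega)}^2>0$. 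Dividing yields $\sup_{0\neq w\in L^2(\Omega)}a(u,w)/\norm{w}{}\ge a(u,\gamma\Delta u)/\norm{\gamma\Delta u}{}\gtrsim\norm{u}{H^2(\Omega)}$; equivalently $\norm{A:D^2u}{}\gtrsim\norm{u}{H^2(\Omega)}$, so $u\mapsto A:D^2u$ is injective with closed range.

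For the second inf--sup condition I would use surjectivity of $u\mapsto A:D^2u$ onto $L^2(\Omega)$. Indeed, for $f\in L^2(\Omega)$ the Proposition above provides the solution $u_f\in X$ of~\eqref{eq:varform:strong} with $\norm{u_f}{H^2(\Omega)}\lesssim\norm{f}{}$, and since $\Delta X=L^2(\Omega)$ (convexity of $\Omega$) and $\gamma$ is positive, essentially bounded and bounded away from zero, the set $\{\gamma\Delta v:v\in X\}$ equals $L^2(\Omega)$, so the variational identity forces $A:D^2u_f=f$ (see also~\cite{SmearsSueli13}). Given $0\neq w\in L^2(\Omega)$, put $u:=u_w$; then $a(u,w)=\ip{A:D^2u_w}{w}=\norm{w}{}^2$, whence $\sup_{0\neq u'\in X}a(u',w)/\norm{u'}{H^2(\Omega)}\ge\norm{w}{}^2/\norm{u_w}{H^2(\Omega)}\gtrsim\norm{w}{}>0$. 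Combined with the first estimate, this shows that $a$ satisfies the (two-sided) inf--sup condition on $X\times L^2(\Omega)$.

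I do not expect a genuine obstacle here: the statement is essentially a repackaging of the well-posedness of~\eqref{eq:varform:strong} established in~\cite{SmearsSueli13}. The only point requiring a little care is that the available coercivity estimate carries the weight $\gamma$ and lives on $X\times X$, so one must pass to the unweighted $L^2(\Omega)$ pairing; this is exactly why the test function $w=\gamma\Delta u$ is used in the first inf--sup condition, and why the two-sided essential bounds on $\gamma$ — which follow from the ellipticity condition~\eqref{eq:cond:ell} — are invoked when identifying $\{\gamma\Delta v:v\in X\}$ with $L^2(\Omega)$.
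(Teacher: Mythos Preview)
Your proposal is correct and follows essentially the same approach as the paper: the same test function $w=\gamma\Delta u$ for the first $\inf$--$\sup$ condition (using coercivity of the weighted bilinear form), and the same choice of $u$ as the strong solution with datum $f=w$ for the second. Your version is slightly more detailed---you give a quantitative lower bound in the second condition and spell out why $A:D^2u_f=f$ via $\{\gamma\Delta v:v\in X\}=L^2(\Omega)$---whereas the paper simply invokes the strong solution of~\eqref{eq:model} and records $\sup_{\widetilde u}\ip{A:D^2\widetilde u}{w}\ge\norm{w}{}^2$, but the substance is identical.
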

\begin{proof}
  Boundedness is straightforward to prove. For the $\inf$--$\sup$ conditions let $u\in X$ be given and choose $w =
  \gamma\Delta u \in L^2(\Omega)$. Then, using $\ip{A:D^2u}{\gamma \Delta u} \gtrsim \norm{D^2 u}{}^2
  \simeq \norm{\Delta u}{}^2$ we get that
  \begin{align*}
    \sup_{0\neq \widetilde w\in L^2(\Omega)} \frac{\ip{A:D^2u}{\widetilde w}}{\norm{\widetilde w}{}}
    \geq \frac{\ip{A:D^2u}{w}}{\norm{w}{}} \gtrsim \frac{\norm{D^2 u}{}^2}{\norm{\Delta u}{}} \simeq \norm{D^2 u}{}.
  \end{align*}
  For the other condition let $w\in L^2(\Omega)$ be given and let $u\in X$ denote the solution of
  problem~\eqref{eq:model} with right-hand side $f = w$. This shows that
  \begin{align*}
    \sup_{0\neq \widetilde u\in X} \ip{A:D^2 \widetilde u}{w} \geq \ip{A:D^2 u}{w} = \norm{w}{}^2
  \end{align*}
  which concludes the proof.
\end{proof}

% A direct discretization comes with the following disadvantages:
% \begin{itemize}
%   \item $C^1$ continuity of conforming elements
%   \item If we consider, e.g., the Argyris element, then $u_h$ is a quintic polynomial on each element and we have to
%     implement
%     \begin{align*}
%       \ip{A:D^2 u_h}{\gamma \Delta v_h}_T
%     \end{align*}
%     which needs highly accurate quadrature rules since $D^2 u_h$, $\Delta v_h$ are polynomials of order $3$ in general
%     and moreover, the
%     coefficients of $A$ (therefore also $\gamma$) can be discontinuous functions.
% \end{itemize}

% \subsection{Least-squares formulation}
% The least-squares formulation of~\eqref{eq:model} reads:
% \begin{align}\label{eq:lsq}
%   u\in X: \quad
%   \ip{A:D^2u}{A:D^2 v} = \ip{f}{A:D^2 v} \quad\forall v\in X.
% \end{align}
% This formulation was analyzed in details in~\cite{Gallistl17NONDIV}.
% 
% Again, a direct discretization leads to the same difficulties as mentioned above.

\subsection{Traces}
Similar as in the case of normal traces for $\Hdivset\omega$ elements we define the trace associated to the space
$\HdivDivset\omega$ via integration by parts. 
A detailed analysis is found in our work on the Kirchhoff--Love plate problem~\cite{KLove1}.
The operator $\trdDiv_\omega\colon \HdivDivset\omega \to H^2(\omega)'$, where $H^2(\omega)'$ is the dual space of
$H^2(\omega)$, is for any $\MM\in\HdivDivset\omega$ defined through
\begin{align*}
  (\trdDiv_\omega\MM)(v) := \ip{\div\Div\MM}{v}_\omega - \ip{\MM}{D^2 v}_\omega \quad\forall v\in H^2(\omega).
\end{align*}
As discussed in~\cite{KLove1} the functional $\trdDiv_\omega\MM$ is only supported on the boundary $\partial\omega$,
i.e., $\trdDiv_\omega\MM (v) = 0$ for all $v\in H_0^2(\omega)$.
Moreover, if $\MM$ and $v$ are regular enough functions then integration by parts shows that the right-hand side reduces
to boundary integrals. It is therefore natural to introduce the notation
\begin{align*}
  \dual{\trdDiv_\omega\MM}v_{\partial\omega} := (\trdDiv_\omega\MM)(v)
\end{align*}
which we will use throughout this work.

We also make use of the trace operator restricted to testing with functions in $X = H^2(\Omega)\cap H_0^1(\Omega)$.
Formally, integration by parts gives for $v$ with $v|_\Gamma =0$
\begin{align*}
  \ip{\div\Div\MM}{v} - \ip{\MM}{D^2 v} = -\dual{\normal\cdot\MM\normal}{\partial_\normal v}_\Gamma .
\end{align*}
This motivates the definition of the normal-normal trace $\normal\cdot(\cdot)\normal\colon \HdivDivset\Omega \to X'$,
\begin{align*}
  \dual{\normal\cdot\MM\normal}{\partial_\normal v}_\Gamma := -\dual{\trdDiv_\Omega\MM}{v}_\Gamma \quad\forall v\in X.
\end{align*}
Moreover, this also gives rise to the definition of the space
\begin{align*}
  \HdivDivNset\Omega := \set{\MM\in \HdivDivset\Omega}{\normal\cdot\MM\normal = 0}
\end{align*}
which is a closed subspace of $\HdivDivset\Omega$.

In the same manner we define a trace operator for $H^2(\omega)$ functions, i.e., for $u\in H^2(\omega)$,
\begin{align*}
  \dual{\trHtwo_\omega u}{\QQ}_{\partial \omega} := \ip{\div\Div\QQ}{u}_\omega - \ip{\QQ}{D^2 u}_\omega
  \quad\forall \QQ\in\HdivDivset\omega.
\end{align*}
Again we notice that a more detailed discussion on this trace operator is found in~\cite{KLove1}.

We define collective versions of the trace operators introduced above:
\begin{align*}
  \trHtwo_\TT &: H^2(\Omega) \to \HdivDivset\TT', \quad\text{and } \\
  \trdDiv_\TT &: \HdivDivset\Omega \to H^2(\TT)',
\end{align*}
where
\begin{align*}
  (\trHtwo_\TT u)(\QQ) &:= \dual{\trHtwo_\TT u}\QQ_{\cS} := \sum_{T\in\TT} \dual{\trHtwo_T u}{\QQ}_{\partial T}
  \quad\text{and}\\
  (\trdDiv_\TT \MM)(v) &:= \dual{\trdDiv_\TT \MM}v_{\cS} := \sum_{T\in\TT} \dual{\trdDiv_T \MM}{v}_{\partial T}.
\end{align*}

Our ultraweak formulation relies on the traces of functions in $X$,
\begin{align*}
  \widehat U := \trHtwo_\TT (X)
\end{align*}
equipped with the natural trace norm, i.e., the \emph{minimum energy extension} norm given by
\begin{align*}
  \norm{\widehat\uu}{3/2,1/2,\cS} := \inf \set{\norm{u}{H^2(\Omega)}}{\trHtwo_\TT u = \widehat\uu}.
\end{align*}
Following our own work we have
\begin{proposition}[{\cite[Proposition~3.9]{KLove1}}]\label{prop:traces}
  For $\widehat\uu\in \widehat U$ it holds the identity
  \begin{align*}
    \norm{\widehat \uu}{3/2,1/2,\cS} = \sup_{0\neq \QQ\in\HdivDivset\TT} 
    \frac{\dual{\widehat \uu}{\QQ}_{\cS}}{\norm{\QQ}{\HdivDivset\TT}}.
  \end{align*}
\end{proposition}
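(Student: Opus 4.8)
The plan is to prove the two inequalities of the asserted identity separately. Throughout, write $s$ for the supremum on the right-hand side, and fix once and for all a function $u_0\in X$ with $\trHtwo_\TT u_0=\widehat\uu$, which exists by definition of $\widehat U$.

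\emph{The inequality $s\le\norm{\widehat\uu}{3/2,1/2,\cS}$} is the routine direction. For any $u\in X$ with $\trHtwo_\TT u=\widehat\uu$ and any $0\neq\QQ\in\HdivDivset\TT$, I would expand $\dual{\widehat\uu}{\QQ}_\cS=\sum_{T\in\TT}\bigl(\ip{\div\Div\QQ}{u}_T-\ip{\QQ}{D^2 u}_T\bigr)$, bound the $T$-th term by $\norm{\QQ}{\HdivDivset T}\,\norm{u}{H^2(T)}$ using Cauchy--Schwarz in each $L^2(T)$ pairing and then for the resulting pairs of numbers, and sum over $T$ with one more Cauchy--Schwarz to get $\dual{\widehat\uu}{\QQ}_\cS\le\norm{\QQ}{\HdivDivset\TT}\,\norm{u}{H^2(\Omega)}$. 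Dividing by $\norm{\QQ}{\HdivDivset\TT}$, taking the supremum over $\QQ$ and the infimum over $u$ yields the claim.

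\emph{The inequality $\norm{\widehat\uu}{3/2,1/2,\cS}\le s$} carries the real content. By the previous step $\widehat\uu$ is a bounded linear functional on the Hilbert space $\HdivDivset\TT$ (whose inner product is $\ip{\MM}{\QQ}_\TT+\ip{\div\Div\MM}{\div\Div\QQ}_\TT$) of dual norm $s$; let $\QQ^\star\in\HdivDivset\TT$ be its Riesz representative, so $\dual{\widehat\uu}{\QQ}_\cS=\ip{\QQ^\star}{\QQ}_\TT+\ip{\div\Div\QQ^\star}{\div\Div\QQ}_\TT$ for all $\QQ$ and $\norm{\QQ^\star}{\HdivDivset\TT}=s$. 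The candidate extension is $v^\star:=\div\Div\QQ^\star$, taken elementwise, a priori only in $L^2(\Omega)$. Testing the Riesz identity with $\QQ\in\DD(T)^{d\times d}\cap\LL_\mathrm{sym}^2(T)$ for a fixed element $T$ — for which $\dual{\widehat\uu}{\QQ}_\cS=\ip{\div\Div\QQ}{u_0}_T-\ip{\QQ}{D^2 u_0}_T=0$ by integration by parts, $\QQ$ being compactly supported in $T$ — gives $\ip{\div\Div\QQ^\star}{\div\Div\QQ}_T+\ip{\QQ^\star}{\QQ}_T=0$, that is, $D^2(v^\star|_T)=-\QQ^\star|_T$ in $\DD'(T)$. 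So $v^\star|_T\in L^2(T)$ has all second distributional derivatives in $L^2(T)$, whence $v^\star|_T\in H^2(T)$ by a standard regularity result on the bounded Lipschitz element $T$, with $\norm{v^\star}{H^2(T)}^2=\norm{\div\Div\QQ^\star}{T}^2+\norm{\QQ^\star}{T}^2=\norm{\QQ^\star}{\HdivDivset T}^2$; summing, $v^\star\in H^2(\TT)$ with $\norm{v^\star}{H^2(\TT)}=s$. Using $v^\star|_T=\div\Div\QQ^\star|_T$, $D^2 v^\star|_T=-\QQ^\star|_T$ and the definition of the collective trace, one checks $\dual{\trHtwo_\TT v^\star}{\QQ}_\cS=\sum_T\bigl(\ip{\div\Div\QQ}{\div\Div\QQ^\star}_T+\ip{\QQ}{\QQ^\star}_T\bigr)=\dual{\widehat\uu}{\QQ}_\cS$ for all $\QQ\in\HdivDivset\TT$, hence $\trHtwo_\TT v^\star=\widehat\uu$.

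\emph{Conformity and conclusion.} It remains to upgrade $v^\star$ to an element of $X$. From $\trHtwo_\TT(v^\star-u_0)=0$, testing against matrix fields supported on a single element shows $\trHtwo_T(v^\star-u_0)|_T=0$ on every $T$, and by the characterization of the kernel of $\trHtwo_T$ on $H^2(T)$ in~\cite{KLove1} this gives $(v^\star-u_0)|_T\in H_0^2(T)$. A function that is $H_0^2$ on each element vanishes together with its gradient on every $\partial T$, so it has no jumps across interior faces and vanishes to first order on $\Gamma$; hence $v^\star-u_0\in H_0^2(\Omega)\subseteq X$ and $v^\star=u_0+(v^\star-u_0)\in X$, with $\norm{v^\star}{H^2(\Omega)}=\norm{v^\star}{H^2(\TT)}=s$. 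This shows $\norm{\widehat\uu}{3/2,1/2,\cS}\le s$ (the degenerate case $\widehat\uu=0$, where $\QQ^\star=0$ and $s=0$, is included). I expect the first inequality and the algebraic identities to be routine; the genuine obstacle is the passage from the abstract Riesz representative $\QQ^\star$ to a \emph{conforming} minimum-energy extension, which hinges on the elliptic-regularity fact that an $L^2$ function with $L^2$ Hessian on a Lipschitz domain lies in $H^2$, together with the precise kernel and range description of $\trHtwo_T$ from~\cite{KLove1}.
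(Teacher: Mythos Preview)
The paper does not supply a self-contained proof here; it cites \cite[Proposition~3.9]{KLove1} (stated there for traces of $H_0^2(\Omega)$ functions) and remarks that the argument carries over verbatim to $X=H^2(\Omega)\cap H_0^1(\Omega)$. Your proof is correct and is exactly the natural Riesz-representation argument one uses for such trace-duality identities: take the Riesz representative $\QQ^\star$ of $\widehat\uu$ in $\HdivDivset\TT$, read off the candidate extension $v^\star=\div\Div\QQ^\star$, identify $D^2 v^\star=-\QQ^\star$ elementwise by testing against compactly supported fields, and then promote $v^\star$ from $H^2(\TT)$ to $X$ via the kernel characterization $\ker(\trHtwo_T)=H_0^2(T)$ from~\cite{KLove1}. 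The one regularity ingredient you flag --- $v\in L^2(T)$ with $D^2 v\in L^2(T)^{d\times d}$ on a bounded Lipschitz $T$ forces $v\in H^2(T)$ --- is indeed a standard interpolation fact. So your approach is correct and matches that of the reference the paper defers to.
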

We note that~\cite[Proposition~3.9]{KLove1} has been stated and proved for traces of $H_0^2(\Omega)$ functions but
equally applies to the present case with traces of $H^2(\Omega)\cap H_0^1(\Omega)$ functions.
See also the recent work~\cite[Section~3.3]{ReissnerMindlinDPG}.

\subsection{Ultraweak formulation}
First, we rewrite problem~\eqref{eq:model} as
\begin{subequations}\label{eq:reformulation}
\begin{align}
  A:\MM &= f, \\
  \MM-D^2 u &= 0, \\
  u|_\Gamma &= 0.
\end{align}
\end{subequations}

Then, we test the first equation with some $v\in L^2(\Omega)$ and the second with $\QQ\in \HdivDivset\TT$. This (formally)
leads to
\begin{align*}
  \sum_{T\in\TT} \left( \ip{A:\MM}v_T + \ip{\MM}{\QQ}_T - \ip{u}{\dDiv\QQ}_T + \dual{\widehat\uu}\QQ_{\partial T} \right) =
  \sum_{T\in\TT}\ip{f}v_T,
\end{align*}
where $\widehat\uu = \trHtwo_\TT u$.

For the functional analytic setting of this formulation we will work in the spaces
\begin{align*}
  U &:= L^2(\Omega) \times \LL_\mathrm{sym}^2(\Omega) \times \widehat U, \\
  V &:= L^2(\Omega) \times \HdivDivset\TT,
\end{align*}
equipped with the norms
\begin{align*}
  \norm{\uu}U^2 &:= \norm{u}{}^2 + \norm{\MM}{}^2 + \norm{\widehat\uu}{3/2,1/2,\cS}^2, \\
  \norm{\vv}V^2 &:= \norm{v}{}^2 + \norm{\QQ}{\HdivDivset\TT}^2
\end{align*}
for $\uu=(u,\MM,\widehat\uu)\in U$, $\vv = (v,\QQ)\in V$.
The bilinear form and the right-hand side functional corresponding to the ultraweak formulation then read
\begin{align}
  b(\uu,\vv) &:= \ip{u}{-\dDiv\QQ}_\TT + \ip{\MM}{Av+\QQ} + \dual{\widehat\uu}{\QQ}_\cS, \\
  F(\vv) &:= \ip{f}v,
\end{align}
for all $\uu = (u,\MM,\widehat\uu) \in U$, $\vv=(v,\QQ)\in V$.

\begin{theorem}\label{thm:dpg}
  Let $f\in L^2(\Omega)$. The problem 
  \begin{align}\label{eq:ultraweak}
    \uu \in U : \quad
    b(\uu,\vv) = F(\vv) \quad\forall \vv\in V
  \end{align}
  admits a unique solution $\uu^\star\in U$ and it satisfies 
  \begin{align*}
    \norm{\uu^\star}U \leq C \norm{F}{V'} = C\norm{f}{}.
  \end{align*}
\end{theorem}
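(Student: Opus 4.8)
I regard $b$ as inducing a bounded linear operator $B\colon U\to V'$, $\dual{B\uu}{\vv}:=b(\uu,\vv)$, and aim to show $B$ is an isomorphism with $\norm{B^{-1}}{}\le C$. By the standard well-posedness theory for linear variational problems in Hilbert spaces it suffices to establish three facts: (i) $b$ is bounded on $U\times V$; (ii) the inf--sup condition $\gamma:=\inf_{0\neq\uu\in U}\sup_{0\neq\vv\in V} b(\uu,\vv)/(\norm{\uu}U\norm{\vv}V)>0$; (iii) if $\vv\in V$ satisfies $b(\uu,\vv)=0$ for all $\uu\in U$, then $\vv=0$. The claimed estimate then reads $\norm{\uu^\star}U\le\gamma^{-1}\norm{F}{V'}$, and $\norm{F}{V'}=\norm{f}{}$ is immediate: ``$\le$'' follows from $\norm\vv V\ge\norm v{}$, and equality by testing with $\vv=(f,0)$. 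Boundedness in (i) is routine -- the volume terms by Cauchy--Schwarz, and the skeleton term by Proposition~\ref{prop:traces}, which yields $|\dual{\widehat\uu}{\QQ}_\cS|\le\norm{\widehat\uu}{3/2,1/2,\cS}\norm{\QQ}{\HdivDivset\TT}$.

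For (iii) I would argue directly. Let $\vv=(v,\QQ)$ satisfy $b(\uu,\vv)=0$ for all $\uu$. Testing with $u\in L^2(\Omega)$ only gives $\ip{u}{\dDiv\QQ}_\TT=0$, hence $\div\Div\QQ=0$ a.e.\ on each $T\in\TT$. Testing with $\MM\in\LL_\mathrm{sym}^2(\Omega)$ only gives $Av+\QQ=0$ (note $Av+\QQ$ is symmetric since $A$ is), i.e.\ $\QQ=-Av$. Finally testing with $\widehat\uu=\trHtwo_\TT w$, $w\in X$, and using $\div\Div\QQ=0$ elementwise, gives $\ip{\QQ}{D^2 w}=0$, i.e.\ $\ip{v}{A:D^2 w}=0$ for all $w\in X$. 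By Lemma~\ref{lem:blfa} the map $w\mapsto A:D^2 w$ is onto $L^2(\Omega)$, so $v=0$, and therefore $\QQ=-Av=0$.

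The core is the inf--sup estimate (ii), which I would obtain by transferring stability from the \emph{unbroken} formulation, i.e.\ the one tested with conforming $\QQ\in\HdivDivset\Omega$. That formulation is equivalent to the strong (mixed) problem and is well posed by Lemma~\ref{lem:blfa} together with $\norm{D^2 v}{}\simeq\norm{v}{H^2(\Omega)}$ for $v\in X$ (using convexity of $\Omega$). Concretely, given $\uu=(u,\MM,\widehat\uu)\in U$, an optimal test function $\vv(\uu)$ is built in stages: first use a conforming $\QQ\in\HdivDivset\Omega$ together with the well-posedness of the strong problem to control $\norm{\widehat\uu}{3/2,1/2,\cS}$ -- here Proposition~\ref{prop:traces} is exactly what identifies this norm with the skeleton duality and guarantees a realizing $\QQ$; then add element-local corrections (possible since $\HdivDivset T$ contains functions with arbitrarily prescribed $\div\Div$) to recover $\norm u{}$ and $\norm\MM{}$; finally combine by the usual DPG argument, taking a suitable linear combination and absorbing cross terms. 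This is precisely the setting of the abstract ``breaking'' framework for DPG methods, whose hypotheses -- well-posedness of the unbroken problem and the minimum-energy trace characterization -- are supplied by Lemma~\ref{lem:blfa} and Proposition~\ref{prop:traces}; I would follow the presentation in~\cite{KLove1}.

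The step I expect to require the most care is this transfer: one must check that breaking the test space does not destroy stability, which hinges on a gluing statement identifying the $\QQ\in\HdivDivset\TT$ that annihilate $\widehat U$ through $\dual{\cdot}{\QQ}_\cS$ with the global space $\HdivDivNset\Omega$ (equivalently, that $\trdDiv_\TT\QQ=0$ on traces of $X$ forces conformity of $\QQ$ and a vanishing normal--normal trace), together with Proposition~\ref{prop:traces} for the converse. Getting the correct mesh-independent norm on $\widehat U$ and matching it through the duality pairing is the delicate point; the remaining algebra is bookkeeping. With (i)--(iii) in hand, the asserted bound $\norm{\uu^\star}U\le C\norm{F}{V'}=C\norm{f}{}$ follows.
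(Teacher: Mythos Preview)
Your proposal is correct and follows essentially the same route as the paper: both reduce to the abstract breaking-spaces framework via boundedness, the unbroken inf--sup (which the paper packages as adjoint solvability in Lemma~\ref{lem:adjoint} and then Proposition~\ref{prop:infsup}), the trace duality of Proposition~\ref{prop:traces}, and the gluing characterization you anticipate (stated as Lemma~\ref{lem:traces}/Proposition~\ref{prop:traces2}), after which the paper invokes \cite[Theorem~3.3]{breakSpace} rather than~\cite{KLove1}. Your direct argument for (iii) is a harmless variant of the paper's, which deduces it from uniqueness in Lemma~\ref{lem:adjoint}.
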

A proof is presented in Section~\ref{sec:proofs}. We remark that in Section~\ref{sec:proofs} we show that the bounded
bilinear form $b(\cdot,\cdot)$ satisfies the $\inf$--$\sup$ conditions. 
Thus, the latter result is true for general data $F\in V'$.

\begin{proposition}
  Problems~\eqref{eq:model} and~\eqref{eq:ultraweak} are equivalent in the following sense:
  If $u\in X$ solves~\eqref{eq:model}, then $\uu:=(u,D^2 u,\trHtwo_\TT u)\in U$ solves~\eqref{eq:ultraweak}.
  If $\uu=(u,\MM,\widehat\uu)\in U$ solves~\eqref{eq:ultraweak}, then $u\in X$ solves~\eqref{eq:model}.
\end{proposition}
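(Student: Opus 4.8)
The plan is to prove the two implications separately, using the trace identities and the characterization of the minimum-energy-extension norm from Proposition~\ref{prop:traces}.

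First I would treat the direction ``\eqref{eq:model} implies \eqref{eq:ultraweak}''. Suppose $u\in X$ solves~\eqref{eq:model}, i.e.\ $A:D^2u=f$ in $L^2(\Omega)$ and $u|_\Gamma=0$. Set $\MM:=D^2u\in\LL_\mathrm{sym}^2(\Omega)$ and $\widehat\uu:=\trHtwo_\TT u\in\widehat U$, so $\uu:=(u,\MM,\widehat\uu)\in U$ is well defined. It remains to check $b(\uu,\vv)=F(\vv)$ for all $\vv=(v,\QQ)\in V$. Plugging in, the term $\ip{\MM}{Av}=\ip{D^2u}{Av}=\ip{A:D^2u}{v}=\ip{f}{v}=F(\vv)$ by symmetry of $A$. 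Hence it suffices to show that the remaining terms cancel, namely
\begin{align*}
  \ip{u}{-\dDiv\QQ}_\TT + \ip{\MM}{\QQ} + \dual{\widehat\uu}{\QQ}_\cS = 0
  \quad\forall\QQ\in\HdivDivset\TT.
\end{align*}
But this is exactly the definition of the collective trace operator: for each $T\in\TT$, $\dual{\trHtwo_T u}{\QQ}_{\partial T}=\ip{\div\Div\QQ}{u}_T-\ip{\QQ}{D^2u}_T$, and summing over $T$ while recalling $\MM=D^2u$ gives the claimed identity. So this direction is essentially bookkeeping.

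The reverse direction is where the real content lies. Suppose $\uu=(u,\MM,\widehat\uu)\in U$ solves~\eqref{eq:ultraweak}. First I would test with $\vv=(0,\QQ)$ for $\QQ\in\HdivDivset\TT$ that is supported on a single element $T$ and lies in $\HdivDivset{T}$ with $\trdDiv_T\QQ=0$ (equivalently, arising from $\DD(T)^{d\times d}\cap\LL_\mathrm{sym}^2(T)$); this forces $\ip{u}{-\div\Div\QQ}_T+\ip{\MM}{\QQ}_T=0$, and an integration-by-parts/density argument on each $T$ yields $\MM=D^2u$ elementwise, hence $u\in H^2(\TT)$ with $\MM=D^2u$ on all of $\Omega$. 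Next, testing again with $(0,\QQ)$ for general $\QQ\in\HdivDivset\TT$ and substituting $\MM=D^2u$, the identity collapses to $\dual{\widehat\uu-\trHtwo_\TT u}{\QQ}_\cS=0$ for all such $\QQ$; by Proposition~\ref{prop:traces} (applied to the difference, which lies in $\widehat U$ once we know $u\in H^2(\Omega)$) this means $\widehat\uu=\trHtwo_\TT u$ and, crucially, that $u$ has \emph{no} interelement jumps across $\cS$ and vanishing trace on $\Gamma$, so $u\in X$. Finally, testing with $\vv=(v,0)$, $v\in L^2(\Omega)$, gives $\ip{A:\MM}{v}=\ip{f}{v}$, i.e.\ $A:D^2u=f$ in $L^2(\Omega)$; combined with $u|_\Gamma=0$ this is~\eqref{eq:model}.

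The main obstacle is the step that upgrades the elementwise information $u\in H^2(\TT)$, $\MM=D^2u$ to global $H^2$-conformity together with $u\in H_0^1(\Omega)$. The argument is that the collective trace operator $\trHtwo_\TT$ annihilates precisely those piecewise-$H^2$ functions that glue to an element of $X$, so vanishing of $\dual{\trHtwo_\TT u - \widehat\uu}{\cdot}_\cS$ on all of $\HdivDivset\TT$ pins down both the continuity of $u$ and of $\partial_\normal u$ across interior faces and the homogeneous Dirichlet condition on $\Gamma$; this is exactly the content underlying Proposition~\ref{prop:traces} and~\cite{KLove1}, and I would invoke those results rather than reprove the jump characterization from scratch. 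A minor technical point is ensuring the element-local density of smooth symmetric matrices in $\HdivDivset{T}$ (used to conclude $\MM=D^2u$), which follows from the definition of $\HdivDivset{T}$ as a completion given in the Notation section.
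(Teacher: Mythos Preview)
Your proposal is correct and follows essentially the same route as the paper: test with compactly supported $(0,\QQ)$ on each element to get $\MM=D^2u$ in $H^2(\TT)$, then with general $(0,\QQ)$ to identify the elementwise traces with $\widehat\uu$, then with $(v,0)$ to recover $A:D^2u=f$. One minor point: Proposition~\ref{prop:traces} is a norm identity for elements already in $\widehat U$ and does not itself supply the gluing step from $H^2(\TT)$ to $X$; the paper (like your final paragraph) simply invokes the standard argument that matching elementwise $\trHtwo_T$-traces with an element of $\widehat U=\trHtwo_\TT(X)$ forces continuity of $u$ and $\nabla u$ across interfaces and $u|_\Gamma=0$.
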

\begin{proof}
  Let $u\in X$ denote the solution of~\eqref{eq:model}. 
  By construction $\uu=(u,\MM,\widehat\uu) = (u,D^2u,\trHtwo_\TT u)\in U$ satisfies the ultraweak
  formulation~\eqref{eq:ultraweak}.

  Let $\uu = (u,\MM,\widehat\uu)\in U$ solve the ultraweak formulation. Let $T\in \TT$ be given. We
  test~\eqref{eq:ultraweak} with $\vv = (0,\QQ_T)$ where $\QQ_T$ is smooth and has compact support in $T$. 
  It follows that $\dual{\widehat\uu}{\QQ_T}_\cS = \dual{\widehat\uu}{\QQ_T}_{\partial T} = 0$. Therefore, from
  $b(\uu,\vv) = 0$ we infer that
  \begin{align*}
    \ip{\MM}{\QQ_T}_T-\ip{u}{\dDiv\QQ_T}_T = 0.
  \end{align*}
  This means that $D^2 u|_T = \MM|_T$ in the distributional sense
  and since $\MM\in \LL_\mathrm{sym}^2(\Omega)$ we have that $u|_T\in H^2(T)$ for all $T\in\TT$.
  By testing with $\vv=(0,\QQ_T)$ where $\QQ_T|_T\in \HdivDivset{T}$ and $\QQ_T|_{T'}=0$ for $T\neq T'\in\TT$ we obtain
  with the definition of the trace operator that $\trHtwo_T u = \widehat\uu|_{\partial T}$.
  In other words we have shown that $u$ is elementwise an $H^2$ function and its elementwise traces equal to
  $\widehat\uu$, thus, with standard arguments we conclude that $u\in H^2(\Omega)$ and $D^2u = \MM$. Moreover, $u\in X$.
  Finally, testing with $\vv = (v,0)$ where $v\in L^2(\Omega)$ in~\eqref{eq:ultraweak} and using that $D^2 u=\MM$ this shows that $u$ solves the
  strong formulation~\eqref{eq:model}.
\end{proof}

\subsection{DPG method}\label{sec:dpgmethod}
The DPG method~\cite{partI,partII} selects optimal test functions which are computed using the trial-to-test operator
$\Theta\colon U\to V$ defined via the relation
\begin{align*}
  \ip{\Theta \uu}{\vv}_V = b(\uu,\vv) \quad\forall \vv\in V,
\end{align*}
where $\ip\cdot\cdot_V$ denotes the inner product in $V$ that induces the norm $\norm\cdot{V}$.
%We note that $\Theta$ is an isomorphism provided that $\BB$. 
Furthermore, we have that
\begin{align*}
  \sup_{0\neq \vv\in V} \frac{b(\uu,\vv)}{\norm{\vv}{V}} = \sup_{0\neq \vv\in V} \frac{\ip{\Theta\uu}\vv_V}{\norm{\vv}{V}} =
  \norm{\Theta\uu}{V} = b(\uu,\Theta\uu)^{1/2} \quad\forall
  \uu\in U.
\end{align*}
Particularly, if the left-hand side is bounded below by $\norm{\uu}U$ then the bilinear form $b(\cdot,\Theta(\cdot))$ is
coercive on $U$.

We stress that in the proof of Theorem~\ref{thm:dpg} we show that $b(\cdot,\cdot)$ satisfies the $\inf$--$\sup$
conditions. Together with boundedness of $b(\cdot,\cdot)$ on $U\times V$, the observations from above and the Lax--Milgram lemma one concludes:
\begin{theorem}\label{thm:dpg:discrete}
  Let $U_h\subset U$ be some finite dimensional space. The problem
  \begin{align}\label{eq:dpg:discrete}
    \uu_h \in U_h : \quad 
    b(\uu_h,\Theta\ww_h) = F(\Theta\ww_h) \quad\forall \ww_h\in U_h
  \end{align}
  admits a unique solution.

  Let $\uu\in U$ denote the solution of~\eqref{eq:ultraweak} and let $\uu_h\in U_h$ denote the solution
  of~\eqref{eq:dpg:discrete}, then
  \begin{align}\label{eq:quasiopt}
    \norm{\uu-\uu_h}U \leq C_\mathrm{qopt} \min_{\ww_h\in U_h} \norm{\uu-\ww_h}U.
  \end{align}
\end{theorem}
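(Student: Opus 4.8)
The plan is to apply the abstract DPG theory. The statement of Theorem~\ref{thm:dpg:discrete} is the classical result of Demkowicz \& Gopalakrishnan, so the proof amounts to verifying that our setting satisfies the hypotheses of that framework. First I would recall that, as asserted in the text preceding the theorem, the bilinear form $b(\cdot,\cdot)$ is bounded on $U\times V$ and (by the proof of Theorem~\ref{thm:dpg} given in Section~\ref{sec:proofs}) satisfies the two $\inf$--$\sup$ conditions; in particular there is a constant $\gamma>0$ with
\begin{align*}
  \gamma \norm{\uu}U \leq \sup_{0\neq\vv\in V} \frac{b(\uu,\vv)}{\norm{\vv}V} = \norm{\Theta\uu}V \quad\forall\uu\in U,
\end{align*}
where the equality is the computation recorded just above the theorem. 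Combined with $b(\uu,\Theta\uu)=\norm{\Theta\uu}V^2$, this shows that the symmetric bilinear form $(\uu,\ww)\mapsto b(\uu,\Theta\ww)=\ip{\Theta\uu}{\Theta\ww}_V$ is bounded and coercive on $U$, hence also on any subspace $U_h\subset U$. Existence and uniqueness of $\uu_h$ solving~\eqref{eq:dpg:discrete} then follow from the Lax--Milgram lemma applied on $U_h$.

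For the quasi-optimality estimate~\eqref{eq:quasiopt} I would argue in the standard way. Since $\uu$ solves~\eqref{eq:ultraweak}, for any $\ww_h\in U_h$ we have the Galerkin orthogonality $b(\uu-\uu_h,\Theta\ww_h)=0$. Writing $\uu-\uu_h = (\uu-\vv_h)+(\vv_h-\uu_h)$ for an arbitrary $\vv_h\in U_h$ and testing with $\ww_h=\vv_h-\uu_h\in U_h$, coercivity of $b(\cdot,\Theta(\cdot))$ on $U_h$ together with its boundedness gives
\begin{align*}
  \gamma^2\norm{\vv_h-\uu_h}U^2 \leq b(\vv_h-\uu_h,\Theta(\vv_h-\uu_h)) = b(\vv_h-\uu,\Theta(\vv_h-\uu_h))
  \leq \norm{b}\,\norm{\vv_h-\uu}U\,\norm{\vv_h-\uu_h}U,
\end{align*}
where $\norm{b}$ is the norm of $b$ on $U\times V$ and we used $\norm{\Theta\cdot}V\le\norm{b}\norm{\cdot}U$. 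Dividing, applying the triangle inequality $\norm{\uu-\uu_h}U\le\norm{\uu-\vv_h}U+\norm{\vv_h-\uu_h}U$, and taking the infimum over $\vv_h\in U_h$ yields~\eqref{eq:quasiopt} with $C_\mathrm{qopt}=1+\norm{b}^2/\gamma^2$ (finite-dimensionality of $U_h$ guarantees the minimum is attained).

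There is, strictly speaking, no hard obstacle here: the whole theorem is an instance of the abstract DPG paradigm once the well-posedness of~\eqref{eq:ultraweak}, i.e.\ Theorem~\ref{thm:dpg}, is in hand, and that is proved separately in Section~\ref{sec:proofs}. The one point that deserves care is making sure the trial-to-test operator $\Theta$ is well defined and continuous, which it is because $\vv\mapsto b(\uu,\vv)$ is a bounded linear functional on the Hilbert space $V$, so the Riesz representation theorem applies; everything else is bookkeeping with the constants from boundedness and the $\inf$--$\sup$ bound. Since the paper has already announced that these ingredients are established in Section~\ref{sec:proofs}, the cleanest exposition is simply to cite the abstract DPG theory (e.g.~\cite{partI,partII,DPGoverview}) and note that its hypotheses are met.
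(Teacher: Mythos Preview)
Your proposal is correct and follows essentially the same approach as the paper: the paper's ``proof'' is the single sentence before the theorem, stating that boundedness and the $\inf$--$\sup$ conditions (established in the proof of Theorem~\ref{thm:dpg}) make $b(\cdot,\Theta(\cdot))$ bounded and coercive, whence Lax--Milgram yields existence, uniqueness, and C\'ea-type quasi-optimality. Your write-up simply spells this out; the only cosmetic issue is that your displayed bound shows a factor $\norm{b}$ while your stated constant $C_\mathrm{qopt}=1+\norm{b}^2/\gamma^2$ (correctly) reflects $\norm{b}^2$, since one uses both $|b(\cdot,\cdot)|\le\norm{b}\norm{\cdot}U\norm{\cdot}V$ and $\norm{\Theta\cdot}V\le\norm{b}\norm{\cdot}U$.
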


We note that for a practical method we also have to take into account approximations of the optimal test functions.
For many DPG methods this is usually done by choosing a discrete test space that allows the
existence of a Fortin operator.
To that end we make the general assumption that there exists a finite-dimensional subspace $V_h\subset V$, and an
operator $\Pi_F : V \to V_h$ with bounded operator norm, 
\begin{subequations}\label{eq:fortin}
\begin{align}
  \norm{\Pi_F \vv}{V} \leq C_F \norm{\vv}{V} \quad\forall \vv\in V,
\end{align}
and the Fortin property
\begin{align}
  b(\uu_h,\Pi_F\vv) = b(\uu_h,\vv) \quad\forall \uu_h\in U_h, \vv\in V.
\end{align}
\end{subequations}
For the particular choice of spaces that will be used in our numerical examples we verify the existence of such a Fortin
operator in Section~\ref{sec:fullydiscrete}.

The trial-to-test operator is replaced by its discrete version $\Theta_h\colon U_h\to V_h$ given by
\begin{align*}
  \ip{\Theta_h \uu_h}{\vv_h}_V = b(\uu_h,\vv_h) \quad\forall \vv_h\in V.
\end{align*}

The fact that Fortin operators imply well-posedness of mixed finite element schemes is
well-known~\cite{BoffiBrezziFortin}.
For DPG methods, which can be  rewritten as mixed formulations, such a result is explicitly stated
in~\cite[Theorem~2.1]{practicalDPG}.
It follows
\begin{theorem}
  Let $U_h\subset U$, $V_h\subset V$ such that a Fortin operator exists, i.e.,~\eqref{eq:fortin} is satisfied. Then, the
  problem
  \begin{align}\label{eq:dpg:fullydiscrete}
    \uu_h \in U_h : \quad
    b(\uu_h,\Theta_h\ww_h) = F(\Theta_h\ww_h) \quad\forall \ww_h \in U_h
  \end{align}
  admits a unique solution.
  
  Let $\uu\in U$ denote the solution of~\eqref{eq:ultraweak} and let $\uu_h\in U_h$ denote the solution
  of~\eqref{eq:dpg:fullydiscrete}, then
  \begin{align}\label{eq:quasiopt:fullydiscrete}
    \norm{\uu-\uu_h}U \leq C_\mathrm{qopt} C_F \min_{\ww_h\in U_h} \norm{\uu-\ww_h}U.
  \end{align}
\end{theorem}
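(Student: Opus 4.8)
The plan is to deduce this from the abstract theory of Fortin operators applied to DPG methods, precisely as indicated by the reference to~\cite[Theorem~2.1]{practicalDPG}. First I would recall that the ultraweak bilinear form $b(\cdot,\cdot)$ is bounded on $U\times V$ and satisfies the $\inf$--$\sup$ conditions, both established in the course of proving Theorem~\ref{thm:dpg}; these are the only structural facts about the continuous problem that are needed. The key mechanism is that the fully discrete scheme~\eqref{eq:dpg:fullydiscrete} is equivalent to a mixed formulation: introducing the error representation function $\eps_h\in V_h$ via $\ip{\eps_h}{\vv_h}_V = F(\vv_h) - b(\uu_h,\vv_h)$ for all $\vv_h\in V_h$, the pair $(\uu_h,\eps_h)\in U_h\times V_h$ solves a saddle-point problem whose well-posedness is governed by a discrete $\inf$--$\sup$ condition on $b(\cdot,\cdot)$ restricted to $U_h\times V_h$.

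The main step is therefore to transfer the continuous $\inf$--$\sup$ stability to the discrete pair $(U_h,V_h)$ using the Fortin operator. Given $\uu_h\in U_h$, one uses the continuous $\inf$--$\sup$ condition to pick $\vv\in V$ with $b(\uu_h,\vv)\gtrsim \norm{\uu_h}{U}\norm{\vv}{V}$, then replaces $\vv$ by $\Pi_F\vv\in V_h$. The Fortin property~\eqref{eq:fortin} gives $b(\uu_h,\Pi_F\vv) = b(\uu_h,\vv)$, while the boundedness $\norm{\Pi_F\vv}{V}\leq C_F\norm{\vv}{V}$ ensures the test function has controlled norm; combining these yields a discrete $\inf$--$\sup$ constant that degrades only by the factor $C_F$. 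Since $U_h$ is finite dimensional and $b$ restricted to $U_h\times V$ is injective (by the same discrete $\inf$--$\sup$), a dimension/Fredholm argument gives existence and uniqueness of $\uu_h$ solving~\eqref{eq:dpg:fullydiscrete}. For the error estimate, I would write the standard DPG quasi-optimality chain: the discrete solution is the $b(\cdot,\Theta_h(\cdot))$-orthogonal projection in a suitable sense, so $\norm{\uu-\uu_h}{U}$ is controlled by $\inf_{\ww_h\in U_h}\norm{\uu-\ww_h}{U}$ with constant $C_\mathrm{qopt}C_F$, the extra $C_F$ entering exactly through the Fortin-operator perturbation of the test space.

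I expect the only real obstacle to be bookkeeping rather than mathematics: making the equivalence between~\eqref{eq:dpg:fullydiscrete} and the mixed formulation fully rigorous, and tracking how the constants $C_\mathrm{qopt}$ (from Theorem~\ref{thm:dpg:discrete}, i.e.\ the ideal DPG method) and $C_F$ (from the Fortin operator) combine multiplicatively. Since all of this is carried out in full generality in~\cite[Theorem~2.1]{practicalDPG} and the hypotheses there—boundedness of $b$, continuous $\inf$--$\sup$ stability, and existence of a Fortin operator—are all available here, the cleanest proof simply verifies that~\eqref{eq:fortin} together with the results already proved put us in the setting of that reference and invokes it. If a self-contained argument is preferred, the discrete $\inf$--$\sup$ transfer sketched above is the heart of it, and everything else is the routine Babuška/Lax--Milgram machinery for the resulting symmetric positive-definite normal equations.
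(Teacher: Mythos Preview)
Your proposal is correct and matches the paper's approach exactly: the paper does not give a standalone proof but simply observes that the boundedness and $\inf$--$\sup$ conditions established for $b(\cdot,\cdot)$ (in proving Theorem~\ref{thm:dpg}) together with the Fortin assumption~\eqref{eq:fortin} place us in the setting of~\cite[Theorem~2.1]{practicalDPG}, and invokes that result directly. Your sketch of the self-contained mixed-formulation argument is a faithful unpacking of what that reference contains.
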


\subsection{DPG--Least-squares coupling method}\label{sec:dpglsqmethod}
Another possibility is to combine a least-squares formulation and the ultraweak formulation:
\begin{align}
  \uu^\star = \arg\min_{\uu\in U} \left(
  \norm{\CC\uu}{\HdivDivset\TT'}^2 + \norm{A:\MM-f}{}^2 \right),
\end{align}
where $\CC\colon U\to \HdivDivset\TT'$ is the operator corresponding to the bilinear form
\begin{align}\label{eq:defC}
  c(\uu,\QQ) = \ip{u}{-\dDiv\QQ}_\TT +  \ip{\MM}{\QQ}+\dual{\widehat\uu}\QQ_\cS.
\end{align}
The Euler--Lagrange equations read: $\uu\in U:$
\begin{align}\label{eq:dpglsq:eulerlagrange}
  c(\uu,\Theta^{\dDiv}\ww) + \ip{A:\MM}{A:\ZZ} = \ip{f}{A:\ZZ} \quad\forall \ww = (w,\ZZ,\widehat\ww)\in U,
\end{align}
where $\Theta^{\dDiv}\colon U \to \HdivDivset\TT$ is the trial-to-test operator defined via
\begin{align*}
  \ip{\Theta^{\dDiv}\ww}{\QQ}_{\HdivDivset\TT} = c(\ww,\QQ) \quad\forall\QQ\in \HdivDivset\TT.
\end{align*}
We employed a similar idea in~\cite{DPGBEM} for the coupling of least-squares boundary elements methods and the DPG method.

In the following we use the notation $Q = \HdivDivset\TT$.
We note that
\begin{align*}
  \norm{\CC\uu}{Q'} = \sup_{0\neq \QQ\in Q} \frac{c(\uu,\QQ)}{\norm{\QQ}{Q}} 
  = c(\uu,\Theta^{\dDiv}\uu)^{1/2}
\end{align*}
by some standard arguments.

A proof of the next result is found in Section~\ref{sec:proofs} (and follows from the observation on the equivalence to
the DPG method given in Section~\ref{sec:equivalence} below).
\begin{theorem}\label{thm:dpglsq}
  Let $f\in L^2(\Omega)$. Problem~\eqref{eq:dpglsq:eulerlagrange} admits a unique solution $\uu^\star\in U$ which satisfies
  \begin{align*}
    \norm{\uu^\star}U \leq C \norm{f}{}.
  \end{align*}

  Let $U_h\subset U$ denote a finite dimensional subspace. Then, the problem
  \begin{align}\label{eq:lsq:discrete}
    \uu_h \in U_h : \quad
    c(\uu_h,\Theta^{\dDiv}\ww_h) + \ip{A:\MM_h}{A:\ZZ_h} = \ip{f}{A:\ZZ_h} \quad\forall \ww_h \in U_h
  \end{align}
  admits a unique solution (here, $\uu_h=(u_h,\MM_h,\widehat\uu_h)$ and $\ww_h = (w_h,\ZZ_h,\widehat\ww_h)$).

  Moreover, let $\uu\in U$ and $\uu_h\in U$ denote the solutions of~\eqref{eq:dpglsq:eulerlagrange} and~\eqref{eq:lsq:discrete}
  respectively. Then,
  \begin{align*}
    \norm{\uu-\uu_h}U \leq C_\mathrm{qopt} \min_{\ww_h\in U_h} \norm{\uu-\ww_h}U.
  \end{align*}
\end{theorem}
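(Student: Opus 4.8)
The plan is to reduce Theorem~\ref{thm:dpglsq} to a statement about a coercive and bounded bilinear form on $U\times U$, so that existence, uniqueness, stability, and quasi-optimality all follow from the Lax--Milgram lemma and Céa's lemma in the usual way. The key observation is that the bilinear form appearing on the left-hand side of~\eqref{eq:dpglsq:eulerlagrange} is
\begin{align*}
  \widetilde b(\uu,\ww) := c(\uu,\Theta^{\dDiv}\ww) + \ip{A:\MM}{A:\ZZ},
\end{align*}
and, by the definition of $\Theta^{\dDiv}$ and the self-adjointness of the Riesz map, $c(\uu,\Theta^{\dDiv}\ww) = \ip{\Theta^{\dDiv}\uu}{\Theta^{\dDiv}\ww}_Q$ is symmetric in $\uu$ and $\ww$; hence $\widetilde b$ is symmetric. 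Moreover $\widetilde b(\uu,\uu) = \norm{\CC\uu}{Q'}^2 + \norm{A:\MM}{}^2$, so proving well-posedness amounts to showing
\begin{align}\label{eq:lsq:equiv}
  \norm{\uu}U^2 \simeq \norm{\CC\uu}{Q'}^2 + \norm{A:\MM}{}^2 \quad\forall \uu\in U,
\end{align}
i.e. that the least-squares functional is equivalent to $\norm{\cdot}U^2$.

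The first step is to establish the upper bound in~\eqref{eq:lsq:equiv}, namely that $\norm{\CC\uu}{Q'}$ and $\norm{A:\MM}{}$ are each bounded by $\norm{\uu}U$. This is routine: $\norm{A:\MM}{}\lesssim\norm{\MM}{}$ using essential boundedness of $A$ from~\eqref{eq:cond:ell}, and $\norm{\CC\uu}{Q'}\lesssim\norm{\uu}U$ follows from boundedness of each term in $c(\cdot,\cdot)$ together with Proposition~\ref{prop:traces}, which bounds the trace term $\dual{\widehat\uu}{\QQ}_\cS$ by $\norm{\widehat\uu}{3/2,1/2,\cS}\norm{\QQ}{Q}$. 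The second step, the coercivity estimate $\norm{\uu}U\lesssim(\norm{\CC\uu}{Q'}^2+\norm{A:\MM}{}^2)^{1/2}$, is the substantive one. Here the cleanest route is to invoke the equivalence to the DPG method announced in Section~\ref{sec:equivalence}: the $\inf$--$\sup$ conditions for $b(\cdot,\cdot)$ proved in the course of Theorem~\ref{thm:dpg}, restricted appropriately, give control of $\norm{u}{}+\norm{\widehat\uu}{3/2,1/2,\cS}$ (and of $\norm{\MM}{}$ up to the $A:\MM$ part) in terms of the $Q'$-norm of $\CC\uu$ and the residual $A:\MM-f$; the point is that the full ultraweak bilinear form $b$ differs from $c$ precisely by the term $\ip{\MM}{Av}$, and the scalar equation $A:\MM=f$ (tested in $L^2$) is exactly what the least-squares term $\norm{A:\MM-f}{}^2$ penalizes. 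Concretely, one shows that $A:\MM$ together with $\CC\uu$ determines a functional that is, via the $b$-$\inf$--$\sup$ stability, $\simeq\norm{\uu}U$.

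Once~\eqref{eq:lsq:equiv} is in hand, the rest is immediate. Symmetry and~\eqref{eq:lsq:equiv} show $\widetilde b$ is bounded and coercive on $U\times U$; the right-hand side $\ww\mapsto\ip{f}{A:\ZZ}$ is a bounded linear functional with norm $\lesssim\norm{f}{}$, so Lax--Milgram gives the unique solution $\uu^\star$ of~\eqref{eq:dpglsq:eulerlagrange} with $\norm{\uu^\star}U\le C\norm{f}{}$. For any finite-dimensional $U_h\subset U$, coercivity of $\widetilde b$ on $U_h$ gives unique solvability of~\eqref{eq:lsq:discrete}, and since~\eqref{eq:lsq:discrete} is the Galerkin method for a symmetric coercive bilinear form, Céa's lemma yields the quasi-optimality estimate with $C_\mathrm{qopt}$ depending only on the boundedness and coercivity constants from~\eqref{eq:lsq:equiv}. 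I expect the main obstacle to be the coercivity half of~\eqref{eq:lsq:equiv}: making the reduction to the DPG $\inf$--$\sup$ stability fully rigorous requires carefully tracking how the missing term $\ip{\MM}{Av}$ and the separately-penalized equation $A:\MM=f$ recombine — essentially re-deriving, in the least-squares norm, the stability that Section~\ref{sec:proofs} establishes for $b(\cdot,\cdot)$.
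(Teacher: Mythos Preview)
Your approach is correct and is essentially the paper's own proof, only unpacked: the paper simply observes that Section~\ref{sec:equivalence} gives the identity $b(\uu,\Theta\ww) = c(\uu,\Theta^{\dDiv}\ww) + \ip{A:\MM}{A:\ZZ}$, so that~\eqref{eq:dpglsq:eulerlagrange} and~\eqref{eq:lsq:discrete} are literally the DPG equations, and then invokes Theorem~\ref{thm:dpg} and Theorem~\ref{thm:dpg:discrete}. Your Lax--Milgram/C\'ea formulation is exactly what underlies Theorem~\ref{thm:dpg:discrete}, so the two arguments coincide.

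One comment: you make the coercivity step sound harder than it is. You do not need to ``recombine'' $\CC\uu$ and $A:\MM$ or track the ``missing term'' $\ip{\MM}{Av}$ by hand. The equivalence identity of Section~\ref{sec:equivalence} applied with $\ww=\uu$ already reads
\[
  \norm{\CC\uu}{Q'}^2 + \norm{A:\MM}{}^2 \;=\; \widetilde b(\uu,\uu) \;=\; b(\uu,\Theta\uu)
  \;=\; \Big(\sup_{0\neq\vv\in V}\frac{b(\uu,\vv)}{\norm{\vv}{V}}\Big)^2,
\]
and the right-hand side is $\simeq \norm{\uu}U^2$ directly by the boundedness and $\inf$--$\sup$ of $b$ established for Theorem~\ref{thm:dpg}. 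That single line replaces the paragraph you flagged as the ``main obstacle''.
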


As before we consider finite dimensional spaces $U_h\subset U$ and $Q_h\subset Q$ and replace the optimal test-functions
by discretized ones, i.e., consider the discrete operator $\Theta_h^{\dDiv}\colon U_h \to Q_h$,
\begin{align*}
  \ip{\Theta_h^{\dDiv}\uu_h}{\QQ_h}_Q = c(\uu_h,\QQ_h) \quad\forall \QQ_h\in Q_h.
\end{align*}

A Fortin operator for this problem is an operator $\Pi_F\colon Q\to Q_h$ such that
\begin{align}\label{eq:fortin:lsq}
  c(\uu_h,\QQ) = c(\uu_h,\Pi_F\QQ), \quad \norm{\Pi_F\QQ}{Q}\leq C_F \norm{\QQ}Q \quad\forall
  \uu_h\in U_h, \, \QQ\in Q.
\end{align}

The proof of the following result is postponed to Section~\ref{sec:proofs}.
\begin{theorem}\label{thm:dpglsq:discrete}
  Let $U_h\subset U$, $Q_h\subset Q$ such that a Fortin operator exists, i.e.,~\eqref{eq:fortin:lsq} is satisfied. Then, the
  problem
  \begin{align}\label{eq:lsq:fullydiscrete}
    \uu_h \in U_h : \quad
    c(\uu_h,\Theta_h^{\dDiv}\ww_h) + \ip{A:\MM_h}{A:\ZZ_h} = \ip{f}{A:\ZZ_h} \quad\forall \ww_h \in U_h
  \end{align}
  admits a unique solution.
  
  Let $\uu\in U$ denote the solution of~\eqref{eq:dpglsq:eulerlagrange} and let $\uu_h\in U_h$ denote the solution
  of~\eqref{eq:lsq:fullydiscrete}, then
  \begin{align}\label{eq:quasiopt:lsq:fullydiscrete}
    \norm{\uu-\uu_h}U \leq C_\mathrm{qopt} \max\{C_F,1\} \min_{\ww_h\in U_h} \norm{\uu-\ww_h}U.
  \end{align}
\end{theorem}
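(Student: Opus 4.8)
\textbf{Proof plan for Theorem~\ref{thm:dpglsq:discrete}.} The plan is to follow the standard mixed-method argument that underlies DPG-type schemes with approximate test functions, adapting it to the least-squares--DPG coupling. First I would recast the fully discrete problem~\eqref{eq:lsq:fullydiscrete} as a mixed formulation: introduce the error representation function $\varepsilon_h := \Theta_h^{\dDiv}\uu_h \in Q_h$ and observe that the pair $(\uu_h,\varepsilon_h) \in U_h\times Q_h$ solves the saddle-point system consisting of $\ip{\varepsilon_h}{\QQ_h}_Q - c(\uu_h,\QQ_h) = 0$ for all $\QQ_h\in Q_h$, together with $c(\ww_h,\varepsilon_h) + \ip{A:\MM_h}{A:\ZZ_h} = \ip{f}{A:\ZZ_h}$ for all $\ww_h\in U_h$. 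Conversely, any solution of the mixed system yields a solution of~\eqref{eq:lsq:fullydiscrete}. This is exactly the setting of~\cite[Theorem~2.1]{practicalDPG}, which I would invoke: the ingredients needed are (i) boundedness of $c(\cdot,\cdot)$ on $U\times Q$ and of the least-squares term, (ii) the continuous inf--sup stability of the combined operator $\uu\mapsto(\CC\uu, A:\MM)$ on $U$ (equivalently, coercivity of $\uu\mapsto c(\uu,\Theta^{\dDiv}\uu)^{1/2}+\norm{A:\MM}{}$), which is the content of Theorem~\ref{thm:dpglsq}, and (iii) the Fortin property~\eqref{eq:fortin:lsq} with its stability constant $C_F$.

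Concretely, the key steps in order are: (1) Record that $c(\cdot,\cdot)$ is bounded on $U\times Q$ and $\MM\mapsto A:\MM$ is bounded from $\LL_\mathrm{sym}^2(\Omega)$ to $L^2(\Omega)$ (using~\eqref{eq:cond:ell}), so the full bilinear form of the mixed system is bounded. (2) Use Theorem~\ref{thm:dpglsq} to get the continuous stability estimate $\norm{\uu}U \lesssim \norm{\CC\uu}{Q'}+\norm{A:\MM}{} = c(\uu,\Theta^{\dDiv}\uu)^{1/2}$-type bound, i.e.\ the operator associated with the least-squares functional is an isomorphism on $U$. (3) Apply the Fortin operator $\Pi_F$ to transfer the supremum over $Q$ to a supremum over $Q_h$: for $\uu_h\in U_h$, $\sup_{\QQ\in Q}\frac{c(\uu_h,\QQ)}{\norm{\QQ}Q} = \sup_{\QQ\in Q}\frac{c(\uu_h,\Pi_F\QQ)}{\norm{\QQ}Q} \le C_F \sup_{\QQ_h\in Q_h}\frac{c(\uu_h,\QQ_h)}{\norm{\QQ_h}Q}$, which yields discrete stability of the LS--DPG operator on $U_h$ with a loss of a factor $\max\{C_F,1\}$. (4) Conclude unique solvability of~\eqref{eq:lsq:fullydiscrete} from the discrete inf--sup condition (a square, stable linear system) and obtain the Céa-type estimate~\eqref{eq:quasiopt:lsq:fullydiscrete}: the fully discrete solution is the best approximation in $U_h$ with respect to the mesh-dependent norm $\uu\mapsto(\norm{\CC\uu}{Q'}^2+\norm{A:\MM-f}{}^2)^{1/2}$ restricted via $\Pi_F$, and equivalence of this norm with $\norm\cdot U$ (uniformly, by Theorem~\ref{thm:dpglsq}) upgrades it to quasi-optimality in $\norm\cdot U$ with the stated constant $C_\mathrm{qopt}\max\{C_F,1\}$.

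The main obstacle — really the only nonroutine point — is the correct bookkeeping of how the least-squares term $\ip{A:\MM_h}{A:\ZZ_h}$, which is \emph{not} affected by $\Pi_F$, interacts with the $\Pi_F$-distorted DPG part $c(\uu_h,\Theta_h^{\dDiv}\uu_h)$ when establishing the discrete inf--sup constant. One has to check that applying the Fortin operator only to the $\dDiv$-component and leaving the algebraic least-squares term untouched still produces a coercive (on $U_h$) combined form; this is why the factor is $\max\{C_F,1\}$ rather than $C_F$ — the least-squares contribution carries constant $1$. I would handle this by testing the discrete equations with $\ww_h=\uu_h$ and $\QQ_h = \Theta_h^{\dDiv}\uu_h$, using $\norm{\Theta_h^{\dDiv}\uu_h}Q = \sup_{\QQ_h\in Q_h} c(\uu_h,\QQ_h)/\norm{\QQ_h}Q \ge C_F^{-1}\norm{\CC\uu_h}{Q'}$ from step (3), and then invoking the continuous stability of Theorem~\ref{thm:dpglsq}. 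The remaining arguments (boundedness, the abstract mixed-method machinery of~\cite{practicalDPG,BoffiBrezziFortin}, and the Céa estimate) are entirely standard and I would cite rather than reprove them.
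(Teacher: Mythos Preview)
Your proposal is correct, but the paper takes a different and somewhat slicker route. Rather than working directly with the mixed formulation of the LS--DPG scheme and establishing a discrete inf--sup condition from scratch, the paper reduces Theorem~\ref{thm:dpglsq:discrete} to the already-established fully discrete DPG result via the equivalence of Section~\ref{sec:equivalence}. Concretely, the paper extends the given Fortin operator $\Pi^{\dDiv}\colon Q\to Q_h$ to a Fortin operator on all of $V = L^2(\Omega)\times Q$ by setting $\Pi_F(v,\QQ) = (v,\Pi^{\dDiv}\QQ)$, i.e., the identity on the first component; this operator lands in $V_h := L^2(\Omega)\times Q_h$, has norm $\max\{C_F,1\}$, and satisfies the Fortin property for $b(\cdot,\cdot)$. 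The equivalence argument then shows that the fully discrete DPG scheme with this $V_h$ \emph{is} the fully discrete LS--DPG scheme~\eqref{eq:lsq:fullydiscrete}, so the DPG quasi-optimality theorem applies verbatim and the constant $\max\{C_F,1\}$ drops out as the norm of the extended Fortin operator.

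Your direct approach has the advantage of being self-contained and makes transparent why the LS term carries the factor $1$ while only the $c$-part picks up $C_F$ --- exactly the ``bookkeeping'' you flagged. The paper's approach buys economy: no new coercivity or C\'ea argument is needed, and the origin of $\max\{C_F,1\}$ is structural (norm of $(\mathrm{id},\Pi^{\dDiv})$) rather than the outcome of a case split. Both arrive at the same estimate with the same constant.
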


\subsection{Equivalence of DPG and DPG--Least-squares method}\label{sec:equivalence}
We stress that the DPG--Least-squares coupling is only a special representation of the DPG method. To see this, consider
the trial-to-test operator $\Theta : U\to V'$:
For given $\ww = (w,\ZZ,\widehat\ww)\in U$ we compute $\Theta\ww = \vv = (v,\QQ)\in V$ by
\begin{align*}
  \ip{(v,\QQ)}{(\delta v,\delta\QQ)}_V = b(\ww,(\delta v,\delta\QQ)),
\end{align*}
and for $\delta\QQ=0$ we obtain that
\begin{align*}
  \ip{v}{\delta v} = b(\ww,(\delta v,0)) = \ip{A:\ZZ}{\delta v} \quad\forall \delta v\in L^2(\Omega)
\end{align*}
and therefore $v = A:\ZZ$.
On the other hand, if we test with $\delta v=0$, then,
\begin{align*}
  \ip{\QQ}{\delta\QQ}_Q = b(\ww,(0,\delta\QQ)) = c(\ww,\delta\QQ) \quad\forall \delta\QQ\in Q,
\end{align*}
which means that $\QQ = \Theta^{\dDiv}\ww$. 
These observations yield that for all $\uu,\ww\in U$
\begin{align*}
  b(\uu,\Theta\ww) &= b(\uu,(v,\QQ)) = b(\uu,(0,\QQ)) + \ip{A:\MM}{A:\ZZ}
  = c(\uu,\QQ) + \ip{A:\MM}{A:\ZZ} \\
  &= c(\uu,\Theta^{\dDiv}\ww) + \ip{A:\MM}{A:\ZZ}.
\end{align*}

The only difference between the methods is when it comes to the fully discrete schemes:
For the DPG method (Section~\ref{sec:dpgmethod}) we have two components when computing (discrete) optimal test functions
$\Theta_h\uu_h = (\vv_h,\QQ_h)\in N_h\times Q_h$, whereas for the
DPG--Least-squares scheme (Section~\ref{sec:dpglsqmethod}) we only have one $\QQ_h\in Q_h$.
Nevertheless, we can recover the DPG--Least-squares scheme from the DPG method with the same argumentation as above.
Let $M_h\subset \LL_\mathrm{sym}^2(\Omega)$ denote some finite-dimensional subspace to approximate the matrix-valued
solution component and consider
\begin{align*}
  N_h := \set{A:\MM_h}{\MM_h\in M_h}\subset L^2(\Omega).
\end{align*}
Then, the same calculations as above show that the two methods are equivalent (see also Section~\ref{sec:proofs:dpglsq} for
more details).
However, observe that in practice it is hard to determine a basis for the space $N_h$.

\section{Analysis of the ultraweak formulations}\label{sec:proofs}
In this section we present proofs for the main results of Section~\ref{sec:varforms}.
Here, we follow the concept of ``breaking spaces'' introduced in~\cite{breakSpace} for the proof of Theorem~\ref{thm:dpg}. The
proof of Theorem~\ref{thm:dpglsq} is then a simple corollary.

\subsection{Global adjoint problem}

\begin{lemma}\label{lem:adjoint}
  Let $g\in L^2(\Omega)$, $\GG\in \LL_\mathrm{sym}^2(\Omega)$. Then, the problem
  \begin{align*}
    -\dDiv\QQ &= g, \\
    Av + \QQ &= \GG, \\
    \normal\cdot\QQ\normal|_\Gamma &= 0.
  \end{align*}
  admits a unique solution $(v,\QQ)\in L^2(\Omega) \times \HdivDivset\Omega$.

  Moreover,
  \begin{align}
    \norm{v}{} + \norm{\QQ}{\HdivDivset\Omega} \lesssim \norm{g}{} + \norm{\GG}{}.
  \end{align}
\end{lemma}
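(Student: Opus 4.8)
The plan is to eliminate the matrix unknown and reduce the system to a scalar variational problem to which Lemma~\ref{lem:blfa} applies. From the second equation one has $\QQ = \GG - Av$ (with $Av$ the pointwise product of the matrix $A(x)$ and the scalar $v(x)$); inserting this into $-\dDiv\QQ = g$, testing formally against $w\in X$, and using the boundary condition $\normal\cdot\QQ\normal|_\Gamma = 0$ together with $\ip{Av}{D^2 w} = \ip{v}{A:D^2 w}$, one is led to the auxiliary problem
\begin{align}\label{eq:adjoint-scalar}
  v\in L^2(\Omega):\qquad \ip{v}{A:D^2 w} = \ip{\GG}{D^2 w} + \ip{g}{w}\qquad\forall w\in X.
\end{align}
First I would note that the right-hand side of~\eqref{eq:adjoint-scalar} is a functional $L\in X'$ with $\norm{L}{X'}\le \norm{g}{} + \norm{\GG}{}$, since $\norm{w}{},\norm{D^2 w}{}\le\norm{w}{H^2(\Omega)}$. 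Because the bilinear form $(w,v)\mapsto\ip{A:D^2 w}{v}$ is bounded and satisfies the two $\inf$--$\sup$ conditions on $X\times L^2(\Omega)$ by Lemma~\ref{lem:blfa}, the Banach--Ne\v{c}as--Babu\v{s}ka theorem gives a unique $v\in L^2(\Omega)$ solving~\eqref{eq:adjoint-scalar}, with $\norm{v}{}\lesssim\norm{L}{X'}\lesssim\norm{g}{}+\norm{\GG}{}$.

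Given this $v$, I would set $\QQ := \GG - Av\in\LL_\mathrm{sym}^2(\Omega)$, which is symmetric because $A$ is and satisfies $\norm{\QQ}{}\lesssim\norm{\GG}{}+\norm{v}{}\lesssim\norm{g}{}+\norm{\GG}{}$ by the essential boundedness of $A$. It then remains to verify the three PDE statements. For $\phi\in\DD(\Omega)\subset X$, identity~\eqref{eq:adjoint-scalar} with $w=\phi$ gives $\ip{\QQ}{D^2\phi} = \ip{\GG}{D^2\phi} - \ip{v}{A:D^2\phi} = -\ip{g}{\phi}$, which means $\div\Div\QQ = -g$ in the sense of distributions; since $g\in L^2(\Omega)$ this yields $\QQ\in\HdivDivset\Omega$ with $\norm{\div\Div\QQ}{}=\norm{g}{}$, hence $\norm{\QQ}{\HdivDivset\Omega}\lesssim\norm{g}{}+\norm{\GG}{}$. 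In particular $\trdDiv_\Omega\QQ$, and thus $\normal\cdot\QQ\normal\in X'$, are now well defined, and for every $w\in X$,
\begin{align*}
  \dual{\normal\cdot\QQ\normal}{\partial_\normal w}_\Gamma &= -(\trdDiv_\Omega\QQ)(w) = -\ip{\div\Div\QQ}{w} + \ip{\QQ}{D^2 w} \\
  &= \ip{g}{w} + \ip{\GG}{D^2 w} - \ip{v}{A:D^2 w} = 0
\end{align*}
by~\eqref{eq:adjoint-scalar}; hence $\normal\cdot\QQ\normal = 0$, i.e. $\QQ\in\HdivDivNset\Omega$, and $(v,\QQ)$ solves the system with the claimed bound.

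For uniqueness it suffices to run the same computation with $g=\GG=0$: then $\QQ=-Av$, $\div\Div\QQ=0$, $\normal\cdot\QQ\normal=0$, so $\ip{v}{A:D^2 w} = (\trdDiv_\Omega\QQ)(w) = -\dual{\normal\cdot\QQ\normal}{\partial_\normal w}_\Gamma = 0$ for all $w\in X$, whence $v=0$ by the $\inf$--$\sup$ property in Lemma~\ref{lem:blfa}, and then $\QQ=-Av=0$. The only genuine work here is passing from the scalar identity~\eqref{eq:adjoint-scalar} to the three statements on $\QQ$: this is bookkeeping with the definitions of $\trdDiv_\Omega$ and $\normal\cdot(\cdot)\normal$ from Section~\ref{sec:varforms}, using that testing against $\DD(\Omega)$ pins down $\div\Div\QQ$ and that testing against all of $X$ pins down $\normal\cdot\QQ\normal$; the solvability and stability of~\eqref{eq:adjoint-scalar} itself are immediate from Lemma~\ref{lem:blfa}.
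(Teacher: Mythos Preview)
Your proof is correct and follows essentially the same approach as the paper: reduce to the scalar variational problem~\eqref{eq:adjoint-scalar} via Lemma~\ref{lem:blfa}, define $\QQ=\GG-Av$, then test against $\DD(\Omega)$ to identify $\div\Div\QQ$ and against all of $X$ to verify the boundary condition, with uniqueness handled by the same reduction. The only differences are cosmetic (you name Banach--Ne\v{c}as--Babu\v{s}ka explicitly and spell out the norm bound on $\QQ$ a bit more).
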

\begin{proof}
  We define the variational problem
  \begin{align}\label{eq:adjoint:proof:1}
    v \in L^2(\Omega) : \quad \ip{v}{A:D^2 z} = \ip{g}z + \ip{\GG}{D^2 z} \quad\forall z\in X= H^2(\Omega)\cap H_0^1(\Omega).
  \end{align}
  This problem admits a unique solution since $a(z,v) := \ip{A:D^2 z}{v}$ is bounded on $X\times L^2(\Omega)$ 
  and satisfies the $\inf$--$\sup$ conditions (Lemma~\ref{lem:blfa}).
  Now, let $v\in L^2(\Omega)$ denote the solution of~\eqref{eq:adjoint:proof:1}. Then, we have that
  \begin{align*}
    \norm{v}{} \lesssim \norm{g}{} + \norm{\GG}{}.
  \end{align*}
  We define $\QQ \in \LL_\mathrm{sym}^2(\Omega)$ via the relation $Av+\QQ = \GG$. It remains to verify that $\dDiv\QQ
  = -g$ and $\normal\cdot\QQ\normal|_\Gamma = 0$.
  Taking $\QQ = \GG-Av$ in~\eqref{eq:adjoint:proof:1} gives us
  \begin{align*}
    -\ip{\QQ}{D^2 z} = \ip{g}z \quad\forall z\in X.
  \end{align*}
  To see that $\dDiv\QQ = -g \in L^2(\Omega)$ take $z\in \DD(\Omega)$ in the last identity. Then, 
  \begin{align*}
    \dDiv\QQ(z) = \ip{\QQ}{D^2z} = - \ip{g}z
  \end{align*}
  shows that $\dDiv\QQ = -g$.

  Finally, recall the definitions of $\normal\cdot\QQ\normal$ and
  $\trdDiv_\Omega\QQ$. Using $\QQ=\GG-Av$ and~\eqref{eq:adjoint:proof:1} again we get that
  \begin{align*}
    -\dual{\normal\cdot\QQ\normal}{\partial_\normal z}_\Gamma = \dual{\trdDiv\QQ}{z}_\Gamma = 
    \ip{\dDiv\QQ}z - \ip{\QQ}{D^2 z} = \ip{-g}z - \ip{\GG-Av}{D^2z} = 0
  \end{align*}
  for all $z\in X$ which shows that $\normal\cdot\QQ\normal|_\Gamma = 0$.

  The solution to the mixed problem is also unique: Suppose $g = 0$ and $\GG = 0$ and that the pair $(v,\QQ)$ is a
  solution to the mixed formulation. Testing the first equation with $z\in H^2(\Omega)\cap H_0^1(\Omega)$, integration
  by parts, replacing $\QQ$ with $-Av$ and the boundary condition $\normal\cdot\QQ\normal=0$ show that $v\in
  L^2(\Omega)$ satisfies~\eqref{eq:adjoint:proof:1} with right-hand side equal to zero. Consequently, $v=0$ and $\QQ=-Av
  = 0$.
\end{proof}

\subsection{Trace spaces}
A thorough analysis of the trace spaces used in the present work is found in~\cite{KLove1,ReissnerMindlinDPG}.
We only need the following lemma where its proof is a small modification of~\cite[Proposition~3.8]{KLove1} but follows the very
same steps, see also~\cite[Proposition~11]{ReissnerMindlinDPG}.
Therefore, we omit the proof.
\begin{lemma}\label{lem:traces}
  Let $\QQ\in\HdivDivset\TT$. Then,
  \begin{align*}
    \QQ \in \HdivDivset\Omega \text{ with } \normal\cdot\QQ\normal|_\Gamma = 0 
    \Longleftrightarrow
    \dual{\widehat\uu}{\QQ}_\cS = 0 \quad\forall \widehat\uu \in \widehat U.
  \end{align*}
\end{lemma}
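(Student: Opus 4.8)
The plan is to unravel the definitions of the skeleton pairing $\dual{\cdot}{\cdot}_\cS$, of $\trdDiv_\Omega$, and of the normal--normal trace, and to treat the two implications separately. Throughout I use that for $\widehat\uu=\trHtwo_\TT u$ with $u\in X$ one has, by summing the elementwise definition of $\trHtwo_T$,
\[
  \dual{\widehat\uu}{\QQ}_\cS = \sum_{T\in\TT}\Big(\ip{\div\Div(\QQ|_T)}{u}_T - \ip{\QQ|_T}{D^2u}_T\Big),
\]
and that this number does not depend on the chosen representative $u$ (it is a well-defined functional on $\widehat U$); hence ``for all $\widehat\uu\in\widehat U$'' may be read as ``for all $u\in X$''. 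Write $w\in L^2(\Omega)$ for the piecewise double divergence, $w|_T:=\div\Div(\QQ|_T)$.

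``$\Rightarrow$'': Assume $\QQ\in\HdivDivset\Omega$ with $\normal\cdot\QQ\normal|_\Gamma=0$. Since $\QQ$ lies in the global space, $\div\Div(\QQ|_T)=(\div\Div\QQ)|_T$, so the elementwise contributions assemble to $\dual{\widehat\uu}{\QQ}_\cS = \ip{\div\Div\QQ}{u}_\Omega - \ip{\QQ}{D^2u}_\Omega = (\trdDiv_\Omega\QQ)(u) = \dual{\trdDiv_\Omega\QQ}{u}_\Gamma$ for every $u\in X$. By the definition of the normal--normal trace this equals $-\dual{\normal\cdot\QQ\normal}{\partial_\normal u}_\Gamma$, which vanishes by hypothesis. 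Thus $\dual{\widehat\uu}{\QQ}_\cS=0$ for all $\widehat\uu\in\widehat U$.

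``$\Leftarrow$'': Assume $\dual{\widehat\uu}{\QQ}_\cS=0$ for all $\widehat\uu\in\widehat U$. First I would test only with $u\in\DD(\Omega)\subset X$: then $0=\dual{\widehat\uu}{\QQ}_\cS=\ip{w}{u}_\Omega-\ip{\QQ}{D^2u}_\Omega$, i.e. $\ip{\QQ}{D^2u}_\Omega=\ip{w}{u}_\Omega$ for all $u\in\DD(\Omega)$. Since such $u$ need not vanish across the interelement faces, this is precisely the statement that the distributional $\div\Div$ of $\QQ$ on all of $\Omega$ equals $w\in L^2(\Omega)$; invoking the graph-space characterization of $\HdivDivset\Omega$ from~\cite{KLove1} we get $\QQ\in\HdivDivset\Omega$. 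With this in hand, the computation of the forward direction applies to every $u\in X$ and gives $0=\dual{\widehat\uu}{\QQ}_\cS=(\trdDiv_\Omega\QQ)(u)=-\dual{\normal\cdot\QQ\normal}{\partial_\normal u}_\Gamma$ for all $u\in X$. Since $\normal\cdot\QQ\normal$ is by definition the functional $v\mapsto-\dual{\trdDiv_\Omega\QQ}{v}_\Gamma$ on $X$, this says exactly $\normal\cdot\QQ\normal|_\Gamma=0$.

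The one step that carries content is the first part of ``$\Leftarrow$'': upgrading elementwise $\div\Div$-regularity to global membership in $\HdivDivset\Omega$. The crux is that test functions in $\DD(\Omega)$, although supported away from $\Gamma$, are \emph{not} required to vanish on the internal skeleton, so the vanishing of $\dual{\widehat\uu}{\QQ}_\cS$ against them genuinely kills the interface jumps; combined with the density definition of $\HdivDivset\Omega$ one then lands in the global space. This is the portion that follows~\cite[Proposition~3.8]{KLove1} step by step, which is why the details may be omitted.
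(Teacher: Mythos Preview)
Your proof sketch is correct and follows precisely the approach the paper has in mind: the paper omits the proof entirely, stating that it is a small modification of~\cite[Proposition~3.8]{KLove1} (see also~\cite[Proposition~11]{ReissnerMindlinDPG}), and your argument---summing elementwise traces, testing with $\DD(\Omega)\subset X$ to recover global $\HdivDivset\Omega$-regularity via the graph-space characterization, then testing with all of $X$ to obtain $\normal\cdot\QQ\normal|_\Gamma=0$---is exactly that modification spelled out. There is nothing to add.
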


\subsection{Putting together}
To actually show Theorem~\ref{thm:dpg} we verify the assumptions of~\cite[Theorem~3.3]{breakSpace}. 
We give the results in the notation from the present work.

Let $U_0 = L^2(\Omega)\times\LL_\mathrm{sym}^2(\Omega)$. Clearly, $U = U_0\times \widehat U$.
Let $V_0 = L^2(\Omega)\times \HdivDivNset\Omega$. 
Define the bilinear form $b_0\colon U_0\times V_0 \to \R$ by
\begin{align*}
  b_0( (u,\MM), (v,\QQ) ) := \ip{u}{-\dDiv\QQ} + \ip{\MM}{Av+\QQ}.
\end{align*}

\begin{proposition}\label{prop:infsup}
  It holds that
  \begin{align*}
    \norm{u}{} + \norm{\MM}{}\lesssim \sup_{0\neq (v,\QQ)\in V_0} 
    \frac{b_0( (u,\MM),(v,\QQ))}{\big(\norm{v}{}^2+\norm{\QQ}{\HdivDivset\Omega}^2\big)^{1/2}} 
    \quad\forall (u,\MM)\in U_0,
  \end{align*}
  and
  \begin{align*}
    \set{(v,\QQ)\in V_0}{b_0( (u,\MM),(v,\QQ))=0 \quad \forall (u,\MM)\in U_0} = \{0\}.
  \end{align*}
\end{proposition}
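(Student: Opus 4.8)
The plan is to prove the two assertions of Proposition~\ref{prop:infsup} separately, relying on the solvability of the global adjoint problem established in Lemma~\ref{lem:adjoint}. For the $\inf$--$\sup$ estimate, fix $(u,\MM)\in U_0$ and observe that $b_0$ is precisely the pairing that appears on the left-hand side of the adjoint system in Lemma~\ref{lem:adjoint}. Thus I would take $g = u$ and $\GG = \MM$ and let $(v,\QQ)\in L^2(\Omega)\times\HdivDivset\Omega$ with $\normal\cdot\QQ\normal|_\Gamma=0$, i.e.\ $(v,\QQ)\in V_0$, be the corresponding solution, so that $-\dDiv\QQ = u$ and $Av+\QQ = \MM$. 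Then
\begin{align*}
  b_0((u,\MM),(v,\QQ)) = \ip{u}{-\dDiv\QQ} + \ip{\MM}{Av+\QQ} = \norm{u}{}^2 + \norm{\MM}{}^2.
\end{align*}
Combining this with the stability bound $\norm{v}{} + \norm{\QQ}{\HdivDivset\Omega} \lesssim \norm{u}{} + \norm{\MM}{}$ from Lemma~\ref{lem:adjoint} yields
\begin{align*}
  \sup_{0\neq(v',\QQ')\in V_0}\frac{b_0((u,\MM),(v',\QQ'))}{\big(\norm{v'}{}^2+\norm{\QQ'}{\HdivDivset\Omega}^2\big)^{1/2}}
  \geq \frac{\norm{u}{}^2+\norm{\MM}{}^2}{\big(\norm{v}{}^2+\norm{\QQ}{\HdivDivset\Omega}^2\big)^{1/2}}
  \gtrsim \norm{u}{}+\norm{\MM}{},
\end{align*}
which is the first claim.

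For the second assertion — that the adjoint test space contains no nontrivial element annihilated by all of $U_0$ — I would argue directly. Suppose $(v,\QQ)\in V_0$ satisfies $b_0((u,\MM),(v,\QQ)) = 0$ for all $(u,\MM)\in U_0$. Choosing $\MM=0$ and letting $u$ range over $L^2(\Omega)$ gives $\ip{u}{-\dDiv\QQ}=0$ for all $u\in L^2(\Omega)$; since $\QQ\in\HdivDivset\Omega$ we have $\dDiv\QQ\in L^2(\Omega)$, hence $\dDiv\QQ=0$. Choosing $u=0$ and letting $\MM$ range over $\LL_\mathrm{sym}^2(\Omega)$ gives $\ip{\MM}{Av+\QQ}=0$ for all symmetric $\MM$, hence $Av+\QQ=0$, i.e.\ $\QQ=-Av$. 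Together with $\dDiv\QQ=0$, the boundary condition $\normal\cdot\QQ\normal|_\Gamma=0$ built into $V_0$, and the definition of the traces, this means $v\in L^2(\Omega)$ is exactly a solution of the homogeneous adjoint problem of Lemma~\ref{lem:adjoint} (equivalently of~\eqref{eq:adjoint:proof:1} with zero right-hand side); by the uniqueness part of that lemma, $v=0$, and then $\QQ=-Av=0$.

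The main obstacle, such as it is, lies in making the second step fully rigorous: one must be careful that the pointwise identity $Av+\QQ=\GG$ (here with $\GG=0$) genuinely forces $\QQ\in\LL_\mathrm{sym}^2(\Omega)$ to coincide with $-Av$ — this uses that $A$ maps into symmetric matrices and $v$ is scalar, so $Av$ is symmetric and the test against arbitrary symmetric $\MM$ is legitimate — and that the reduction to the homogeneous adjoint problem indeed matches the setup of Lemma~\ref{lem:adjoint}, including the boundary condition. Both the surjectivity/stability input and the uniqueness input are already packaged in Lemma~\ref{lem:adjoint} (which in turn rests on Lemma~\ref{lem:blfa}), so no further regularity theory is needed; the argument is essentially a transcription of the well-posedness of the adjoint mixed system into the language of the $\inf$--$\sup$ framework of~\cite[Theorem~3.3]{breakSpace}.
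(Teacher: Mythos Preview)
Your proposal is correct and follows essentially the same approach as the paper: both parts are reduced to Lemma~\ref{lem:adjoint}, using its existence/stability for the $\inf$--$\sup$ estimate and its uniqueness for the triviality of the annihilator. The only cosmetic difference is that in the second part the paper tests with the single choice $u=-\dDiv\QQ$, $\MM=Av+\QQ$ to obtain $\norm{\dDiv\QQ}{}^2+\norm{Av+\QQ}{}^2=0$ in one step, whereas you test separately with $(\cdot,0)$ and $(0,\cdot)$; the conclusion is identical.
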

\begin{proof}
  The $\inf$--$\sup$ condition follows from Lemma~\ref{lem:adjoint}: Let $(u,\MM)\in U_0$ be given and choose $g=u$,
  $\GG = \MM$ in Lemma~\ref{lem:adjoint} and let $(v,\QQ)\in V_0$ denote the solution to the system from
  Lemma~\ref{lem:adjoint}. Then,
  \begin{align*}
    \norm{u}{}^2 + \norm{\MM}{}^2 &= b_0( (u,\MM),(v,\QQ)) = 
    \frac{b_0((u,\MM),(v,\QQ))}{\big(\norm{v}{}^2+\norm{\QQ}{\HdivDivset\Omega}^2\big)^{1/2}}
    \big(\norm{v}{}^2+\norm{\QQ}{\HdivDivset\Omega}^2\big)^{1/2} \\
    &\lesssim \frac{b_0((u,\MM),(v,\QQ))}{\big(\norm{v}{}^2+\norm{\QQ}{\HdivDivset\Omega}^2\big)^{1/2}}
    \big(\norm{u}{}^2+\norm{\MM}{}^2\big)^{1/2}.
  \end{align*}
  Dividing by $\big(\norm{u}{}^2+\norm{\MM}{}^2\big)^{1/2}$ and taking the supremum over $V_0$ finishes the proof of the
  $\inf$--$\sup$ condition. 
  
  To see the last assertion suppose that $(v,\QQ)\in V_0$ such that $b_0( (\cdot,\cdot),(v,\QQ)) = 0$, i.e.,
  \begin{align*}
    \ip{u}{-\dDiv\QQ} + \ip{\MM}{Av+\QQ} = 0 \quad\forall (u,\MM)\in U_0 = L^2(\Omega)\times \LL_\mathrm{sym}^2(\Omega).
  \end{align*}
  Take $u = -\dDiv\QQ$ and $\MM = Av+\QQ$. Then,
  \begin{align*}
    \norm{\dDiv\QQ}{}^2 + \norm{Av+\QQ}{}^2 = 0
  \end{align*}
  or equivalently $\div\Div\QQ = 0$ and $Av+\QQ=0$. By Lemma~\ref{lem:adjoint} this homogeneous problem has a unique
  solution equal to $0$ which concludes the proof.
\end{proof}

Define the bilinear form $\widehat b\colon \widehat U \times V \to \R$ by
\begin{align*}
  \widehat b(\widehat\uu,(v,\QQ)) = \dual{\widehat\uu}{\QQ}_{\cS}.
\end{align*}

We note that Lemma~\ref{lem:traces} can also be stated as
\begin{proposition}\label{prop:traces2}
  It holds that
  \begin{align*}
    V_0 = \set{v\in V}{\widehat b(\widehat\uu,\vv) = 0 \quad\forall 
    \widehat\uu \in \widehat U}.
  \end{align*}
\end{proposition}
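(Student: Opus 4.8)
The plan is to unwind the definitions and invoke Lemma~\ref{lem:traces} directly. Write a generic element of $V = L^2(\Omega)\times\HdivDivset\TT$ as $\vv = (v,\QQ)$. By definition of $\widehat b$, the condition $\widehat b(\widehat\uu,\vv)=0$ for all $\widehat\uu\in\widehat U$ reads $\dual{\widehat\uu}{\QQ}_\cS = 0$ for all $\widehat\uu\in\widehat U$; note that this constraint does not involve the component $v$, so the first factor $L^2(\Omega)$ is common to both sides of the claimed identity and it suffices to characterise the admissible $\QQ$.

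Here I would apply Lemma~\ref{lem:traces}: for $\QQ\in\HdivDivset\TT$ one has $\dual{\widehat\uu}{\QQ}_\cS = 0$ for all $\widehat\uu\in\widehat U$ if and only if $\QQ\in\HdivDivset\Omega$ with $\normal\cdot\QQ\normal|_\Gamma = 0$, i.e.\ $\QQ\in\HdivDivNset\Omega$. Combining this with the previous observation yields $\set{\vv\in V}{\widehat b(\widehat\uu,\vv)=0 \ \forall\,\widehat\uu\in\widehat U} = L^2(\Omega)\times\HdivDivNset\Omega = V_0$, which is exactly the assertion.

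I do not expect a genuine obstacle: Proposition~\ref{prop:traces2} is only a reformulation of Lemma~\ref{lem:traces}, whose proof (a small modification of \cite[Proposition~3.8]{KLove1}, see also \cite[Proposition~11]{ReissnerMindlinDPG}) has already been imported. The one point worth stating explicitly is that the pairing $\dual{\cdot}{\cdot}_\cS$ entering $\widehat b$ is the collective trace over the \emph{broken} space $\HdivDivset\TT$, so that vanishing of this pairing against all $\widehat\uu\in\widehat U$ is precisely what forces both interelement matching of $\QQ$ (hence $\QQ\in\HdivDivset\Omega$) and the homogeneous normal-normal trace on $\Gamma$; once this is acknowledged the identification of the two sets is immediate.
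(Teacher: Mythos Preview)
Your proposal is correct and matches the paper's approach exactly: the paper simply introduces Proposition~\ref{prop:traces2} with the phrase ``We note that Lemma~\ref{lem:traces} can also be stated as'' and provides no separate proof, treating it as an immediate reformulation. Your unpacking of the definitions and direct appeal to Lemma~\ref{lem:traces} is precisely what is intended.
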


\textbf{Proof of Theorem~\ref{thm:dpg}.}
We note that Proposition~\ref{prop:traces}, Proposition~\ref{prop:infsup}, and Proposition~\ref{prop:traces2} verify the
assumptions of~\cite[Theorem~3.3]{breakSpace}.
In particular, this implies that problem~\eqref{eq:ultraweak} is well-posed.
By the DPG theory this yields that the semi-discrete problem~\eqref{eq:dpg:discrete} admits a unique solution and the
quasi-optimality stated in Theorem~\ref{thm:dpg}. \qed

\subsection{Analysis of the DPG-Least-squares scheme}\label{sec:proofs:dpglsq}
Theorem~\ref{thm:dpglsq} follows from Theorem~\ref{thm:dpg} and the observation on the equivalence of the two schemes
from Section~\ref{sec:equivalence}.

The results on the fully-discrete scheme from Theorem~\ref{thm:dpglsq:discrete} can be seen as follows.
By assumption there exists an operator $\Pi^{\dDiv} : Q\to Q_h$ with $c(\uu_h,\QQ) = c(\uu_h,\Pi^{\dDiv}\QQ)$ and
$\norm{\Pi^{\dDiv}\QQ}{Q}\leq C_F \norm{\QQ}{Q}$.
Consider the space
\begin{align*}
  V_h = \set{A:\MM}{\MM \in \LL_\mathrm{sym}^2(\Omega)} \times Q_h
  = L^2(\Omega) \times Q_h.
\end{align*}
It is straightforward to show that $\Pi_F\colon V \to V_h$ given by 
\begin{align*}
  \Pi_F(v,\QQ) = (v,\Pi^{\dDiv}\QQ) \quad\text{satisfies}\quad
  \norm{\Pi_F\vv}{V} \leq \max\{C_F,1\} \norm{\vv}V
\end{align*}
and
\begin{align*}
  b(\uu_h,\Pi_F\vv) = b(\uu_h,\vv) \quad\text{for all } \uu_h\in U_h, \vv\in V.
\end{align*}
The same argumentation as in Section~\ref{sec:equivalence} then shows that
\begin{align*}
  b(\uu_h,\Theta_h\ww_h) = c(\uu_h,\Theta_h^{\dDiv}\ww_h) + \ip{A:\MM_h}{A:\ZZ_h}
  \quad\forall \uu_h,\ww_h \in U_h
\end{align*}
and
\begin{align*}
  F(\Theta_h\ww_h) = \ip{f}{A:\ZZ_h} \quad\forall \ww_h\in U_h.
\end{align*}
Therefore, 
Theorem~\ref{thm:dpg:discrete} implies the assertions of Theorem~\ref{thm:dpglsq:discrete}. \qed

\section{A posteriori estimators}\label{sec:aposteriori}
In the following we define a posteriori estimates for the two numerical schemes introduced in this work and state their
efficiency and reliability.
\begin{theorem}\label{thm:est:dpg}
  Suppose that $U_h\subset U$, $V_h\subset V$ and that there exists a Fortin operator~\eqref{eq:fortin} with $\Pi_F\vv =
  (\Pi^{L^2}v,\Pi^{\dDiv}\QQ)\in V_h$ for $\vv=(v,\QQ)\in V$.
  Let $\uu\in U$ and $\uu_h\in U_h$ denote the solution of~\eqref{eq:ultraweak} and~\eqref{eq:dpg:fullydiscrete},
  respectively. Then,
  \begin{align}
    \norm{\uu-\uu_h}U^2 \simeq \etaDPG^2:= \norm{F-B\uu_h}{V_h'}^2 + \norm{A:\MM_h-f}{}^2.
  \end{align}
\end{theorem}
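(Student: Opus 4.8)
The plan is to exploit the equivalence, established in Section~\ref{sec:equivalence}, between the DPG method and the DPG--Least-squares coupling, so that the a posteriori analysis becomes a standard built-in error estimator argument for a DPG--type method combined with a least-squares residual. Write the residual functional $\uu \mapsto F - B\uu_h \in V'$, where $B\colon U \to V'$ is the operator induced by $b(\cdot,\cdot)$. Since $b(\cdot,\cdot)$ is bounded and satisfies the $\inf$--$\sup$ conditions (shown in the proof of Theorem~\ref{thm:dpg}), the operator $B$ is a norm isomorphism from $U$ onto $V'$, hence
\begin{align*}
  \norm{\uu-\uu_h}U \simeq \norm{B(\uu-\uu_h)}{V'} = \norm{F-B\uu_h}{V'}.
\end{align*}
So it suffices to show $\norm{F-B\uu_h}{V'}^2 \simeq \norm{F-B\uu_h}{V_h'}^2 + \norm{A:\MM_h-f}{}^2$, i.e.\ that restricting the residual to the discrete test space $V_h$ only loses (up to equivalence) the least-squares contribution $\norm{A:\MM_h-f}{}$.

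First I would prove the lower bound $\etaDPG \lesssim \norm{\uu-\uu_h}U$ (efficiency). The term $\norm{F-B\uu_h}{V_h'} \le \norm{F-B\uu_h}{V'} = \norm{B(\uu-\uu_h)}{V'} \lesssim \norm{\uu-\uu_h}U$ is immediate from boundedness of $B$. For the second term, note that $A:\MM - f = 0$ a.e.\ (the exact solution satisfies $A:\MM=f$ by the equivalence proposition), so $\norm{A:\MM_h-f}{} = \norm{A:(\MM_h-\MM)}{} \lesssim \norm{\MM_h-\MM}{} \le \norm{\uu-\uu_h}U$, using boundedness of the Frobenius contraction with $A\in L^\infty$.

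For the upper bound (reliability) $\norm{\uu-\uu_h}U \lesssim \etaDPG$, the key is the Fortin operator. Split any $\vv = (v,\QQ)\in V$ and write, using the Fortin property~\eqref{eq:fortin} in the form $b(\uu_h,\Pi_F\vv)=b(\uu_h,\vv)$ and the Galerkin-type identity that $\uu_h$ satisfies,
\begin{align*}
  (F-B\uu_h)(\vv) = (F-B\uu_h)(\vv-\Pi_F\vv) + (F-B\uu_h)(\Pi_F\vv).
\end{align*}
The second term is bounded by $\norm{F-B\uu_h}{V_h'}\,\norm{\Pi_F\vv}{V} \le C_F\norm{F-B\uu_h}{V_h'}\norm{\vv}{V}$. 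For the first term, $b(\uu_h,\vv-\Pi_F\vv)=0$ by the Fortin property, so it reduces to $F(\vv-\Pi_F\vv) = \ip{f}{v-\Pi^{L^2}v}$; here I would use the structural fact (from Section~\ref{sec:equivalence} / Section~\ref{sec:proofs:dpglsq}, reflected in the hypothesis $\Pi_F\vv=(\Pi^{L^2}v,\Pi^{\dDiv}\QQ)$) that the $L^2$-component of the optimal-test machinery forces $v_h = A:\MM_h$ for the discrete solution, and that $\Pi^{L^2}$ can be taken to be the identity on the relevant component or at least that $F(\vv-\Pi_F\vv)$ telescopes into a term controlled by $\norm{A:\MM_h-f}{}\,\norm{\vv}{V}$. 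Concretely, since $F(\vv) = \ip{f}{v}$ and $B\uu_h$ acting on $(v,0)$ gives $\ip{A:\MM_h}{v}$, the residual tested against $(v,0)\in V$ equals $\ip{f-A:\MM_h}{v}$, so that part of $\norm{F-B\uu_h}{V'}$ is exactly $\norm{f-A:\MM_h}{}$; the remaining residual, tested against $(0,\QQ)$, is what is captured by $\norm{F-B\uu_h}{V_h'}$ up to the Fortin constant. Combining, $\norm{F-B\uu_h}{V'} \lesssim \norm{F-B\uu_h}{V_h'} + \norm{f-A:\MM_h}{}$, which is the desired bound.

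The main obstacle is the bookkeeping in the reliability direction: one must carefully account for the fact that the $V_h'$-norm only controls the residual on $Q_h$ (the $\dDiv$-component), not on the full $L^2(\Omega)$-component of $V$, and verify that the missing $L^2$-component of the residual is exactly $f - A:\MM_h$ rather than something involving $u_h$ or $\widehat\uu_h$. This hinges on the decomposition $\Pi_F\vv=(\Pi^{L^2}v,\Pi^{\dDiv}\QQ)$ assumed in the statement together with the observation from Section~\ref{sec:equivalence} that $b(\uu_h,(v,0)) = \ip{A:\MM_h}{v}$ depends only on $\MM_h$; once this is in hand, the two residual pieces decouple cleanly. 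I expect the rest to be routine applications of boundedness of $B$, boundedness of $\Pi_F$, and the triangle inequality.
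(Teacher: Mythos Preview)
Your proposal is correct and follows essentially the same route as the paper, which invokes \cite[Theorem~2.1]{DPGaposteriori} to obtain $\norm{\uu-\uu_h}U^2 \simeq \norm{F-B\uu_h}{V_h'}^2 + \osc(F)^2$ and then shows $\osc(F)\lesssim\norm{A:\MM_h-f}{}\lesssim\norm{\uu-\uu_h}U$; you are simply reproving the cited result from scratch via the Fortin splitting $(F-B\uu_h)(\vv)=(F-B\uu_h)(\vv-\Pi_F\vv)+(F-B\uu_h)(\Pi_F\vv)$.

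The one place where your write-up is vague is the treatment of $F(\vv-\Pi_F\vv)=\ip{f}{v-\Pi^{L^2}v}$. You do not need any statement about ``$\Pi^{L^2}$ being the identity on the relevant component'': the precise mechanism (and this is exactly what the paper does) is that the Fortin property~\eqref{eq:fortin}, applied with $\uu_h=(0,\MM_h,0)$ and $\vv=(v,0)$, gives the orthogonality $\ip{A:\MM_h}{v-\Pi^{L^2}v}=0$, so that
\[
  \ip{f}{v-\Pi^{L^2}v}=\ip{f-A:\MM_h}{v-\Pi^{L^2}v}\lesssim\norm{f-A:\MM_h}{}\,\norm{v}{}.
\]
Your ``Concretely'' paragraph is an equivalent (and equally valid) way to reach the same bound by splitting $V=L^2(\Omega)\times Q$ directly; that argument is in fact what the paper uses in the proof of Corollary~\ref{cor:est:dpg}. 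Either route closes the proof; just replace the hand-waving sentence with the one-line orthogonality above.
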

\begin{proof}
  By~\cite[Theorem~2.1]{DPGaposteriori} we have that
  \begin{align*}
    \norm{\uu-\uu_h}U^2 \simeq \norm{F-B\uu_h}{V_h'}^2 + \osc(F)^2,
  \end{align*}
  where the oscillation term is defined as
  \begin{align*}
    \osc(F) := \sup_{0\neq \vv\in V} \frac{F(\vv-\Pi_F\vv)}{\norm{\vv}V}.
  \end{align*}
  It only remains to show that $\osc(F)\lesssim \norm{A:\MM_h-f}{}\lesssim \norm{\uu-\uu_h}U$.
  Recall that $F(\vv) = \ip{f}v$ for $\vv=(v,\QQ)\in V$. From the Fortin property~\eqref{eq:fortin} we deduce that
  $\ip{A:\MM_h}{v-\Pi^{L^2}v} = 0$ for all $\uu_h = (0,\MM_h,0)\in U_h$, $\vv = (v,0)\in V$. This yields that
  \begin{align*}
    \sup_{0\neq \vv\in V} \frac{F(\vv-\Pi_F\vv)}{\norm{\vv}V} &= \sup_{0\neq v\in L^2(\Omega)}
    \frac{\ip{f}{v-\Pi^{L^2}v}}{\norm{v}{}}
    \\& = \sup_{0\neq v\in L^2(\Omega)} \frac{\ip{f-A:\QQ_h}{v-\Pi^{L^2}v}}{\norm{v}{}}
    \lesssim \norm{f-A:\QQ_h}{}
  \end{align*}
  for any $\QQ_h\in\LL^2_\mathrm{sym}(\Omega)$. Choosing $\QQ_h=\MM_h$ and using that $f = A:D^2 u = A:\MM$ we get that
  \begin{align*}
    \osc(F) \lesssim \norm{f-A:\MM_h}{} = \norm{A:(\MM-\MM_h)}{} \lesssim \norm{\MM-\MM_h}{} \leq \norm{\uu-\uu_h}U
  \end{align*}
  which finishes the proof.
\end{proof}

\begin{corollary}\label{cor:est:dpg}
  With the same notation and assumptions as in Theorem~\ref{thm:est:dpg} it holds that
  \begin{align*}
    \norm{\uu-\uu_h}U^2 \simeq \etaLSQ^2 := \norm{\CC\uu_h}{Q_h'}^2 
    + \norm{A:\MM_h-f}{}^2.
  \end{align*}
\end{corollary}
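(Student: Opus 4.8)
The plan is to deduce the estimate from Theorem~\ref{thm:est:dpg} by showing that the two estimators are equivalent with constants independent of the mesh, i.e. $\etaLSQ^2\simeq\etaDPG^2$; then $\norm{\uu-\uu_h}U^2\simeq\etaDPG^2\simeq\etaLSQ^2$ follows at once.

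First I would rewrite the residual functional. For $\vv=(v,\QQ)\in V$ the definitions of $b(\cdot,\cdot)$ and $F(\cdot)$, the identity $\ip{\MM_h}{Av}=\ip{A:\MM_h}{v}$, and the definition~\eqref{eq:defC} of $c(\cdot,\cdot)$ give
\[
  (F-B\uu_h)(v,\QQ)=\ip{f-A:\MM_h}{v}-c(\uu_h,\QQ),
\]
so the residual decouples into a part depending only on $v$ and a part depending only on $\QQ$. By the hypothesis of Theorem~\ref{thm:est:dpg} the test space has the product form $V_h=N_h\times Q_h$ with $N_h\subset L^2(\Omega)$, $Q_h\subset Q$ (this is implicit in $\Pi_F\vv=(\Pi^{L^2}v,\Pi^{\dDiv}\QQ)\in V_h$), and $\norm{\vv}V^2=\norm{v}{}^2+\norm{\QQ}{Q}^2$ is the $\ell^2$-combination of the two component norms. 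Hence the dual norm splits along the factors,
\[
  \norm{F-B\uu_h}{V_h'}^2=\Big(\sup_{0\neq v\in N_h}\frac{\ip{f-A:\MM_h}{v}}{\norm{v}{}}\Big)^2+\Big(\sup_{0\neq\QQ\in Q_h}\frac{c(\uu_h,\QQ)}{\norm{\QQ}{Q}}\Big)^2.
\]
The second term equals $\norm{\CC\uu_h}{Q_h'}^2$ by the definition of $\CC$, while the first term lies between $0$ and $\norm{A:\MM_h-f}{}^2$, being a restricted supremum of $v\mapsto\ip{f-A:\MM_h}{v}/\norm{v}{}$ over a subspace of $L^2(\Omega)$.

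Feeding these two bounds into the definitions of $\etaDPG$ and $\etaLSQ$ I obtain $\etaLSQ^2\le\etaDPG^2\le 2\,\etaLSQ^2$, and Theorem~\ref{thm:est:dpg} then completes the proof. The computation is essentially bookkeeping; the only step that needs a moment of care is the splitting of $\norm{\cdot}{V_h'}$ over the product structure of $V_h$, for which one uses that $\norm{\cdot}{V}$ is the $\ell^2$-combination of the component norms (so that no cross terms survive under a Cauchy--Schwarz/optimization argument over the two component norms) and that $N_h$ and $Q_h$ are genuinely independent factors of $V_h$.
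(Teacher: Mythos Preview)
Your argument is correct and gives the equivalence with the explicit constants $\etaLSQ^2\le\etaDPG^2\le 2\,\etaLSQ^2$. The decoupling $(F-B\uu_h)(v,\QQ)=\ip{f-A:\MM_h}{v}-c(\uu_h,\QQ)$ and the exact splitting of the dual norm on the Hilbert-space product $V_h=N_h\times Q_h$ are both valid; for the latter one simply notes that the Riesz representer of the joint functional is the pair of component representers, so the squared dual norms add.

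The paper proceeds differently. For reliability it estimates $\norm{F-B\uu_h}{V_h'}\le\norm{\CC\uu_h}{Q_h'}+\norm{f-A:\MM_h}{}$ by a triangle-type bound on the supremum (without invoking the exact product splitting), obtaining $\etaDPG\lesssim\etaLSQ$ and hence $\norm{\uu-\uu_h}U\lesssim\etaLSQ$ via Theorem~\ref{thm:est:dpg}. For efficiency it does \emph{not} compare $\etaLSQ$ back to $\etaDPG$; instead it bounds $\norm{\CC\uu_h}{Q_h'}\lesssim\norm{\uu-\uu_h}U$ directly from the consistency relation $c(\uu,\QQ)=0$ (so $\CC\uu_h=\CC(\uu_h-\uu)$) together with boundedness of $\CC$. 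Your route is more symmetric and delivers both directions at once with explicit constants; the paper's efficiency step is a classical consistency argument that makes the role of the exact solution $\uu\in\ker(\CC)$ more transparent and would work verbatim even if $Q_h$ were replaced by all of $Q$.
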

\begin{proof}
  Starting from Theorem~\ref{thm:est:dpg} we have that (using the notation $V_h=W_h\times Q_h$)
  \begin{align*}
    \norm{F-B\uu_h}{V_h'} &= \sup_{0\neq \vv_h=(v_h,\QQ_h)\in V_h} \frac{\ip{f}{v_h}-b(\uu_h,\vv_h)}{\norm{\vv_h}{V}}
    \\
    &= \sup_{0\neq \vv_h=(v_h,\QQ_h)\in V_h} \frac{\ip{f-A:\MM_h}{v_h}-c(\uu_h,\QQ_h)}{\norm{\vv_h}{V}} \\
    &\leq \sup_{0\neq \QQ_h\in Q_h} \frac{c(\uu_h,\QQ_h)}{\norm{\QQ_h}{\HdivDivset\TT}} 
    + \sup_{0\neq v_h\in W_h} \frac{\ip{f-A:\MM_h}{v_h}}{\norm{v_h}{}} 
    \\
    &\leq \norm{\CC\uu_h}{Q_h'} +
    \norm{f-A:\MM_h}{}.
  \end{align*}
  It only remains to show that $\norm{\CC\uu_h}{Q_h'}\lesssim \norm{\uu-\uu_h}U$ since in the proof of
  Theorem~\ref{thm:est:dpg} it was already shown that $\norm{f-A:\MM_h}{} \lesssim \norm{\uu-\uu_h}U$.
  Note that the exact solution can be written as $\uu = (u,\MM,\widehat\uu) =  (u,D^2u,\trHtwo_\TT u)$ and that
  $c(\uu,\QQ) = 0$ for all $\QQ\in \HdivDivset\TT$, hence, $\uu\in \ker(\CC)$. Finally, this together with boundedness of
  the operator $\CC$
  implies that
  \begin{align*}
    \norm{\CC\uu_h}{Q_h'} = \norm{\CC(\uu-\uu_h)}{Q_h'} \leq \norm{\CC(\uu-\uu_h)}{\HdivDivset\TT'} \lesssim
    \norm{\uu-\uu_h}U
  \end{align*}
  which finishes the proof.
\end{proof}

For the DPG-Least-squares scheme the same estimator as given in Corollary~\ref{cor:est:dpg} can be used.
The proof of the following result is similar to the one of Theorem~\ref{thm:est:dpg} and Corollary~\ref{cor:est:dpg},
and is therefore left to the reader.
\begin{theorem}\label{thm:est:lsq}
  Suppose that $U_h\subset U$, $Q_h\subset Q$ and that there exists a Fortin operator~\eqref{eq:fortin:lsq}.
  Let $\uu\in U$ and $\uu_h\in U_h$ denote the solution of~\eqref{eq:dpglsq:eulerlagrange} and~\eqref{eq:lsq:fullydiscrete},
  respectively. Then,
  \begin{align}
    \norm{\uu-\uu_h}U^2 \simeq \etaLSQ^2:= \norm{\CC\uu_h}{Q_h'}^2 + \norm{A:\MM_h-f}{}^2.
  \end{align}
\end{theorem}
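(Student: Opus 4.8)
The plan is to deduce this from Theorem~\ref{thm:est:dpg} and Corollary~\ref{cor:est:dpg} by making precise, on the fully discrete level, the equivalence between the DPG and the DPG--Least-squares scheme from Section~\ref{sec:equivalence} (see also Section~\ref{sec:proofs:dpglsq}). First I would fix the notation $U_h = U_h^u\times M_h\times\widehat U_h$ with $M_h\subset\LL_\mathrm{sym}^2(\Omega)$ the space used for the $\MM$-component, and set $N_h := \set{A:\MM_h}{\MM_h\in M_h}\subset L^2(\Omega)$ --- a \emph{finite-dimensional} space --- and $V_h := N_h\times Q_h\subset V$. Given the assumed Fortin operator $\Pi^\dDiv\colon Q\to Q_h$ of~\eqref{eq:fortin:lsq}, I would check that $\Pi_F(v,\QQ) := (\Pi^{L^2}v,\Pi^\dDiv\QQ)$, where $\Pi^{L^2}$ is the $L^2(\Omega)$-orthogonal projection onto $N_h$, is a Fortin operator for $b(\cdot,\cdot)$ in the sense of~\eqref{eq:fortin}: it is bounded with constant $\max\{C_F,1\}$, it has precisely the product form required as a hypothesis of Theorem~\ref{thm:est:dpg}, and the Fortin identity $b(\uu_h,\Pi_F\vv)=b(\uu_h,\vv)$ holds because the three $\QQ$-dependent pieces of $b$ reassemble into $c(\uu_h,\QQ)-c(\uu_h,\Pi^\dDiv\QQ)=0$, while the remaining term $\ip{\MM_h}{A(v-\Pi^{L^2}v)}=\ip{A:\MM_h}{v-\Pi^{L^2}v}$ vanishes since $A:\MM_h\in N_h$. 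By the computation in Section~\ref{sec:equivalence}, the fully discrete DPG scheme~\eqref{eq:dpg:fullydiscrete} with this $U_h$ and $V_h$ produces exactly the same solution $\uu_h$ as the DPG--Least-squares scheme~\eqref{eq:lsq:fullydiscrete}.

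Second, I would simply apply Theorem~\ref{thm:est:dpg} to this DPG pair, obtaining $\norm{\uu-\uu_h}U^2\simeq\norm{F-B\uu_h}{V_h'}^2+\norm{A:\MM_h-f}{}^2$, and then repeat the argument in the proof of Corollary~\ref{cor:est:dpg} to replace the residual by $\norm{\CC\uu_h}{Q_h'}$: splitting the supremum defining $\norm{F-B\uu_h}{V_h'}$ over the two factors $N_h$ and $Q_h$, the $Q_h$-part equals $\sup_{0\neq\QQ_h\in Q_h}c(\uu_h,\QQ_h)/\norm{\QQ_h}{\HdivDivset\TT}=\norm{\CC\uu_h}{Q_h'}=c(\uu_h,\Theta_h^\dDiv\uu_h)^{1/2}$ (so the estimator is computable), and the $N_h$-part is bounded by $\norm{f-A:\MM_h}{}$ and is thus absorbed, whereas conversely $\norm{\CC\uu_h}{Q_h'}\le\norm{F-B\uu_h}{V_h'}+\norm{f-A:\MM_h}{}$. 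Combining this with the efficiency bounds $\norm{\CC\uu_h}{Q_h'}\le\norm{\CC(\uu-\uu_h)}{\HdivDivset\TT'}\lesssim\norm{\uu-\uu_h}U$ (using $\CC\uu=0$ and boundedness of $\CC$, exactly as in Corollary~\ref{cor:est:dpg}) and $\norm{f-A:\MM_h}{}=\norm{A:(\MM-\MM_h)}{}\lesssim\norm{\uu-\uu_h}U$ yields $\norm{\uu-\uu_h}U^2\simeq\norm{\CC\uu_h}{Q_h'}^2+\norm{A:\MM_h-f}{}^2=\etaLSQ^2$.

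The chain of equivalences is routine; the only point that needs genuine care --- the main obstacle, such as it is --- is the first step: identifying precisely which DPG test space the DPG--Least-squares method corresponds to (the finite-dimensional $N_h\times Q_h$ with $N_h=A:M_h$, not the informal $L^2(\Omega)\times Q_h$ of Section~\ref{sec:proofs:dpglsq}, which would not be admissible in Theorem~\ref{thm:est:dpg}) and verifying that the induced $\Pi_F$ satisfies both the product-form hypothesis of Theorem~\ref{thm:est:dpg} and the Fortin identity for $b$. Once this bookkeeping is done, reliability and efficiency of $\etaLSQ$ for the DPG--Least-squares scheme follow immediately from the two earlier results, so no new analytical ingredient is required.
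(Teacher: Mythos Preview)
Your proposal is correct and carries out exactly what the paper leaves to the reader (the paper gives no proof, only the remark that the argument is similar to those of Theorem~\ref{thm:est:dpg} and Corollary~\ref{cor:est:dpg}). Your reduction via the equivalence of Section~\ref{sec:equivalence}, with the finite-dimensional test space $V_h=N_h\times Q_h$ where $N_h=\{A{:}\MM_h:\MM_h\in M_h\}$, is in fact more careful than the paper's own treatment in Section~\ref{sec:proofs:dpglsq}, which used the infinite-dimensional choice $L^2(\Omega)\times Q_h$; your choice keeps $V_h$ finite-dimensional (as the general DPG setup in Section~\ref{sec:dpgmethod} assumes) while still recovering the DPG--Least-squares solution exactly, because $A{:}\ZZ_h\in N_h$ for every $\ZZ_h\in M_h$.
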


\section{Fortin operators}\label{sec:fullydiscrete}
We restrict the presentation of Fortin operators to the lowest-order case (for the trial space) and $d=2$ which allows
us to use results established in~\cite{KLove2}.

\subsection{Discretization}
Let $\TT$ denote a shape-regular triangulation of the domain $\Omega\subset \R^2$.
With $\PP^p(T)$ we denote the space of polynomials on $T\in\TT$ with degree less or equal to $p\in\N_0$.
We consider the space
\begin{align*}
  \PP^p(\TT) = \set{v\in L^2(\Omega)}{v|_T\in \PP^p(T) \,\, \forall T\in\TT}.
\end{align*}

For a triangle $T\in\TT$ we denote with $\EE_T$ the set of its edges, $\EE := \bigcup_{T\in\TT} \EE_T$.
Similar to the definition of $\PP^p(T)$ we denote with $\PP^p(E)$ the set of polynomials on $E\in\EE$ with degree less
or equal to $p\in\N_0$ and 
\begin{align*}
  \PP^p(\EE_T) := \set{v\in L^2(\partial T)}{v|_E \in \PP^p(E) \,\,\forall E\in\EE_T}.
\end{align*}

For $T\in\TT$ we define the local space
\begin{align*}
  U_T := \set{v\in H^2(T)}{\Delta^2 v + v = 0, \, v|_{\partial T} \in \PP^3(\EE_T), \, 
  \normal_T\cdot\nabla v|_{\partial T} \in \PP^1(\EE_T)}
\end{align*}
and the local trace space
\begin{align*}
  \widehat U_T := \trHtwo_T(U_T).
\end{align*}
We note that $\dim(\widehat U_T) = \dim(U_T) = 9$ and that $\widehat U_T$ is also the trace space of the
rHCT element (cf.~\cite{KLove1}). The local degrees of freedom of this element are associated to the nodal values of the
trace and the nodal values of the trace of the gradient, $\{(v(z),\nabla v(z))\,:\,z \text{ is vertex of } T\}$.
The global approximation space is then given by
\begin{align*}
  \widehat U_h = \set{\widehat\vv\in\widehat U}{\widehat\vv|_{\partial T} \in \widehat U_T \quad\forall T\in\TT}.
\end{align*}
We investigate the boundary condition. To that end let $\VV_\Gamma$ denote the boundary vertices of the triangulation
$\TT$. We decompose $\VV_\Gamma = \VV_c \cup \VV_0$ where $\VV_c$ denotes the set of all corner vertices, i.e., all
vertices where the (interior) angle between adjacent edges is strictly less than $\pi$.
Consequently, $\VV_0$ is then the set of boundary vertices where the angle between adjacent edges equals $\pi$ or in
other words the tangential vectors of the edges are equal.
Let $\widehat\vv\in \widehat U_h$. Note that we can identify $\widehat\vv|_\Gamma$ with the set $\{(v(z),\nabla
v(z))\,:\,z\in\VV_\Gamma\}$.
Recall that $v|_\Gamma$ is a polynomial of degree less or equal than three and is determined by its nodal values and the
nodal values of its gradient. In particular, $v|_\Gamma = 0$ is equivalent to
\begin{align*}
  v(z) &= 0 \quad\forall z\in \VV_\Gamma, \\
  \tangential(z)\cdot\nabla v(z) &=0 \quad\forall z\in \VV_0, \\
  \nabla v(z) &=0 \quad\forall z\in \VV_c.
\end{align*}
Here, $\tangential(z)$ denotes the tangential vector in $z\in\Gamma$ which is well-defined for $z\in
\VV_0$. 

\begin{lemma}\label{lem:approx}
  For $u\in H^3(\Omega)$ we have that
  \begin{align*}
    \min_{\widehat\vv_h\in \widehat U_h}\norm{\trHtwo_\TT u - \widehat\vv_h}{3/2,1/2,\cS} \leq C h \norm{u}{H^3(\Omega)}.
  \end{align*}
\end{lemma}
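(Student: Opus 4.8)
The plan is to establish the approximation estimate by constructing an explicit quasi-interpolant into $\widehat U_h$ and bounding the trace norm of the error by the $H^2(\Omega)$-norm of a suitable extension. First I would recall that, by definition of the minimum energy extension norm and Proposition~\ref{prop:traces}, for any $u\in H^3(\Omega)$ and any $\widehat\vv_h\in\widehat U_h$ we have
\begin{align*}
  \norm{\trHtwo_\TT u-\widehat\vv_h}{3/2,1/2,\cS} \le \norm{w-v_h}{H^2(\Omega)}
\end{align*}
for any $w\in X$ with $\trHtwo_\TT w = \trHtwo_\TT u$ and any $v_h\in X$ with $\trHtwo_\TT v_h = \widehat\vv_h$. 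In particular, taking $w=u$ and letting $v_h=I_h u$ be a concrete rHCT interpolant of $u$, the trace-norm error is controlled by $\norm{u-I_h u}{H^2(\Omega)}$. Thus the statement reduces to the well-known interpolation error estimate for the rHCT element: $\norm{u-I_h u}{H^2(\Omega)} \le C h \norm{u}{H^3(\Omega)}$ on shape-regular triangulations. This is where I would invoke the results of~\cite{KLove2} (and standard Bramble--Hilbert / scaling arguments for $C^1$ macro-elements), since the rHCT interpolant reproduces cubics on each macro-triangle and is $H^2$-conforming.

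The one point requiring care is that $I_h u$ must lie in $X=H^2(\Omega)\cap H_0^1(\Omega)$, i.e.\ the interpolant must satisfy the homogeneous Dirichlet boundary condition, so that its trace is an admissible competitor in $\widehat U_h$ as characterized by the nodal conditions $v(z)=0$ on $\VV_\Gamma$, $\tangential(z)\cdot\nabla v(z)=0$ on $\VV_0$, and $\nabla v(z)=0$ on $\VV_c$. Since $u\in H^3(\Omega)\cap H_0^1(\Omega)$ vanishes on $\Gamma$, its trace along each boundary edge is identically zero; hence $u(z)=0$ at every boundary vertex and the tangential derivative $\tangential(z)\cdot\nabla u(z)=0$ at every $z\in\VV_0$ (where the tangent is well-defined). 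At corner vertices $z\in\VV_c$ the function $u$ vanishes along two non-parallel edges, forcing $\nabla u(z)=0$. Therefore the nodal values of $u$ (which are well-defined by the Sobolev embedding $H^3(\Omega)\hookrightarrow C^1(\overline\Omega)$) already satisfy all the constraints that define the homogeneous subspace, so the rHCT interpolant $I_h u$ automatically belongs to $X$ and its trace to $\widehat U_h$.

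I expect the main obstacle to be purely bookkeeping: verifying that the boundary constraints defining $\widehat U_h$ are exactly the nodal conditions forced by $u|_\Gamma=0$, and making sure the rHCT interpolation operator used is the same one whose trace spans $\widehat U_T$ (degree $\le 3$ on edges, normal derivative affine on edges). Given the detailed description of the degrees of freedom already in the text and the cited estimates from~\cite{KLove2}, the proof is then a two-line combination: bound the trace-norm error by the $H^2$-interpolation error via Proposition~\ref{prop:traces}, and apply the standard rHCT estimate. No genuinely hard analysis is needed beyond quoting~\cite{KLove1,KLove2}.
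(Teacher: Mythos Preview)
Your proposal is correct and follows essentially the same two-step argument as the paper: bound the trace norm by the $H^2(\Omega)$-norm of an extension via the definition of the minimum energy extension norm, then invoke the rHCT interpolation estimate (the paper cites~\cite{CiarletHCT} rather than~\cite{KLove2}). Your extra paragraph verifying that the rHCT interpolant inherits the homogeneous boundary constraints is careful bookkeeping that the paper's two-line proof simply omits.
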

\begin{proof}
  The trace theorem implies that
  \begin{align*}
    \norm{\trHtwo_\TT u-\widehat\vv_h}{3/2,1/2,\cS} \leq \norm{u-v_h}{H^2(\Omega)}
  \end{align*}
  for all $\widehat\vv_h$ and $v_h$ with $\trHtwo_\TT v_h = \widehat\vv_h$.
  Interpolation error estimates for rHCT elements (see~\cite{CiarletHCT}) give
  \begin{align*}
    \norm{u-v_h}{H^2(\Omega)} \lesssim h\norm{u}{H^3(\Omega)}
  \end{align*}
  which concludes the proof.
\end{proof}

Using the discrete trial space
\begin{align*}
U_h := \PP^0(\TT)\times (\PP^0(\TT)^{2\times 2}\cap\LL^2_\mathrm{sym}(\Omega)) \times \widehat U_h
\end{align*}
we conclude together with standard approximation results ($\norm{(1-\Pi^0)w}{}\lesssim h\norm{\nabla w}{}$) the
following result:
\begin{corollary}
  Let $u\in H^3(\Omega)\cap X$ and $\uu = (u,D^2 u,\trHtwo_\TT u)\in U$. Then,
  \begin{align*}
    \min_{\ww_h\in U_h} \norm{\uu-\ww_h}{U} \lesssim h \norm{u}{H^3(\Omega)}.
  \end{align*}
\end{corollary}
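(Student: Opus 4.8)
The plan is to reduce the claim to the two separate approximation estimates already at hand, one for the trace component and one for the volume components, and then combine them by the definition of the product norm $\norm{\cdot}{U}$. Concretely, for a chosen $\ww_h=(v_h,\MM_h,\widehat\vv_h)\in U_h$ we have
\begin{align*}
  \norm{\uu-\ww_h}{U}^2 = \norm{u-v_h}{}^2 + \norm{D^2u-\MM_h}{}^2 + \norm{\trHtwo_\TT u - \widehat\vv_h}{3/2,1/2,\cS}^2,
\end{align*}
so it suffices to pick each component independently and bound the three terms by $h\norm{u}{H^3(\Omega)}$ (or better).

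First I would handle the trace term: since $u\in H^3(\Omega)\cap X$, Lemma~\ref{lem:approx} applies verbatim and gives a $\widehat\vv_h\in\widehat U_h$ with $\norm{\trHtwo_\TT u - \widehat\vv_h}{3/2,1/2,\cS}\le C h\norm{u}{H^3(\Omega)}$. Next I would take $v_h=\Pi^0 u\in\PP^0(\TT)$, the elementwise $L^2$ projection onto piecewise constants; the stated standard estimate $\norm{(1-\Pi^0)w}{}\lesssim h\norm{\nabla w}{}$ with $w=u$ yields $\norm{u-v_h}{}\lesssim h\norm{\nabla u}{}\le h\norm{u}{H^3(\Omega)}$. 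For the matrix component I would take $\MM_h$ to be the elementwise $L^2$ projection of $D^2u$ onto $\PP^0(\TT)^{2\times2}\cap\LL^2_\mathrm{sym}(\Omega)$ (the projection of a symmetric matrix field is again symmetric, so this lies in $U_h$); applying the same projection estimate componentwise with $w$ a second derivative of $u\in H^3(\Omega)$ gives $\norm{D^2u-\MM_h}{}\lesssim h\norm{D^3 u}{}\le h\norm{u}{H^3(\Omega)}$.

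Combining the three bounds, $\min_{\ww_h\in U_h}\norm{\uu-\ww_h}{U}\le\norm{\uu-\ww_h}{U}\lesssim h\norm{u}{H^3(\Omega)}$, which is the assertion. There is essentially no obstacle here: the only point requiring a moment's care is that the rHCT interpolant used in Lemma~\ref{lem:approx} respects the homogeneous Dirichlet boundary condition encoded in $\widehat U_h$ (so that $\widehat\vv_h$ genuinely lies in $\widehat U_h$ and not merely in some larger unconstrained trace space), but this is exactly what Lemma~\ref{lem:approx} already asserts for $u\in X$, so it may be invoked directly. The estimate is not claimed to be sharp in the trace norm; it is stated only to the order $h$ matching the lowest-order trial space, so no refined trace interpolation is needed.
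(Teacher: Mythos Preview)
Your proposal is correct and follows exactly the approach the paper intends: the paper proves the corollary by combining Lemma~\ref{lem:approx} for the trace component with the standard piecewise-constant projection estimate $\norm{(1-\Pi^0)w}{}\lesssim h\norm{\nabla w}{}$ applied to $u$ and to the entries of $D^2u$. Your write-up simply spells out the three individual bounds that the paper leaves implicit.
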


\begin{remark}\label{rem:poly}
  It is also possible to define the approximation spaces for polygonal shaped elements.
  Consider a polygonal shaped element $K$ with the set of its edges $\EE_K$ and the space
  \begin{align*}
    U_K := \set{v\in H^2(K)}{\Delta^2 v + v = 0, \, v|_{\partial K} \in \PP^3(\EE_K), \, 
    \normal_K\cdot\nabla v|_{\partial K} \in \PP^1(\EE_K)}.
  \end{align*}
  We then follow the same ideas as presented above by defining the local trace space $\widehat U_K := \trHtwo_K(U_K)$. 
  The local degrees of freedom are associated to the nodal values and nodal values of the gradient which gives
  $\dim(\widehat U_K) = \dim(U_K) = 3\#(\text{vertices of }K)$.
  To obtain approximation results similar to the ones in Lemma~\ref{lem:approx} one needs some restriction on the shape of the element
  $K$. We refer to~\cite{PolyDPG} where all details have been worked out for Poisson's equation (trace spaces of
  $H^1(\Omega)$ functions).
\end{remark}

\begin{remark}\label{rem:higherorder}
  It is also possible to define spaces with higher-order approximation properties.
  Consider
  \begin{align*}
    U_T^p := \begin{cases}
        \set{v\in H^2(T)}{\Delta^2 v + v = 0, \, v|_{\partial T} \in \PP^3(\EE_T), \, 
        \normal_T\cdot\nabla v|_{\partial T} \in \PP^1(\EE_T)} & p=1, \\
        \set{v\in H^2(T)}{\Delta^2 v + v = 0, \, v|_{\partial T} \in \PP^3(\EE_T), \, 
        \normal_T\cdot\nabla v|_{\partial T} \in \PP^2(\EE_T)} & p=2, \\
        \set{v\in H^2(T)}{\Delta^2 v + v = 0, \, v|_{\partial T} \in \PP^{p+1}(\EE_T), \, 
        \normal_T\cdot\nabla v|_{\partial T} \in \PP^{p}(\EE_T)} & p\geq 3,
    \end{cases}
  \end{align*}
  and $\widehat U_T^p = \trHtwo_T(U_T^p)$. Then, for $p=1$ we recover the space $\widehat U_T$. We note that elements of 
  $\widehat U_T^2$ are the traces of HCT elements. The approximation order is $h^2$.
  Without further details we stress that the approximation order of $\widehat U_T^p$ (and the corresponding global
  space) is $h^p$.
\end{remark}

\subsection{Fortin operator}\label{sec:fortin}
We start by citing a result from~\cite{KLove2}. Recall that $Q=\HdivDivset\TT$. We use the discrete space
$Q_h := \PP^4(\TT)^{2\times 2}\cap \LL^2_\mathrm{sym}(\Omega) \subset Q$.
Let $\Pi^p\colon L^2(\Omega)\to \PP^p(\TT)$ denote the $L^2(\Omega)$-projection.

\begin{lemma}[{\cite[Lemma~16]{KLove2}}]\label{lem:fortin}
  There exists $\Pi^{\dDiv}\colon Q\to Q_h$ such that
  \begin{alignat*}{2}
    \dual{\widehat u}{\Pi^{\dDiv}\QQ}_\cS &= \dual{\widehat u}{\QQ}_\cS &\quad& \forall \widehat\uu\in \widehat U_h, \\
    \ip{\MM}{\Pi^{\dDiv}\QQ} &= \ip{\MM}{\QQ} &\quad& \forall \MM\in \PP^0(\TT)^{2\times
    2}\cap\LL^2_\mathrm{sym}(\Omega) \\
    \ip{u}{\div\Div\Pi^{\dDiv}\QQ}_\TT &= \ip{u}{\div\Div\QQ}_\TT 
    &\quad&\forall u\in \PP^2(\TT)
  \end{alignat*}
  for any $\QQ\in Q$. 

  Moreover, 
  \begin{align*}
    \norm{\Pi^{\dDiv}\QQ}{\HdivDivset\TT} \lesssim \norm{\QQ}{\HdivDivset\TT}
    \quad\forall \QQ\in Q.
  \end{align*}
\end{lemma}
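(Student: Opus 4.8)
The plan is to build $\Pi^{\dDiv}$ element by element. Since $Q=\HdivDivset{\TT}$ and $Q_h$ are broken spaces, the pairings $\dual{\cdot}{\cdot}_\cS$, $\ip{\cdot}{\cdot}$ and $\ip{\cdot}{\div\Div(\cdot)}_\TT$ all split over $\TT$, and every $\widehat\uu\in\widehat U_h$ restricts on each $T\in\TT$ to a local trace in $\widehat U_T=\trHtwo_T(U_T)$. Hence it suffices to construct, for each $T\in\TT$, a linear operator $\Pi^{\dDiv}_T\colon\HdivDivset{T}\to\PP^4(T)^{2\times2}\cap\LL^2_\mathrm{sym}(T)$ with $\dual{\trHtwo_T u}{\Pi^{\dDiv}_T\QQ}_{\partial T}=\dual{\trHtwo_T u}{\QQ}_{\partial T}$ for all $u\in U_T$, $\ip{\MM}{\Pi^{\dDiv}_T\QQ}_T=\ip{\MM}{\QQ}_T$ for all constant symmetric matrices $\MM$, and $\ip{w}{\div\Div\Pi^{\dDiv}_T\QQ}_T=\ip{w}{\div\Div\QQ}_T$ for all $w\in\PP^2(T)$, together with a local bound $\norm{\Pi^{\dDiv}_T\QQ}{\HdivDivset{T}}\lesssim\norm{\QQ}{\HdivDivset{T}}$ whose constant depends only on shape-regularity. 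Summing over $\TT$ then yields all four assertions.

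For the local construction I would count degrees of freedom: $\dim\big(\PP^4(T)^{2\times2}\cap\LL^2_\mathrm{sym}(T)\big)=45$, while the three families above amount to $9+3+6=18$ linear conditions. Integrating the trace pairing by parts twice shows that, for $u\in U_T$, the functional $\QQ\mapsto\dual{\trHtwo_T u}{\QQ}_{\partial T}$ only involves the edge moments of $\normal\cdot\QQ\normal$ of degree $\le1$ (tested against $\normal_T\cdot\nabla u|_{\partial T}\in\PP^1(\EE_T)$), the edge moments of a fixed combination of $\Div\QQ\cdot\normal$ and $\partial_\ttl(\ttl\cdot\QQ\normal)$ of degree $\le3$ (tested against $u|_{\partial T}\in\PP^3(\EE_T)$), and the vertex values of the jumps of $\ttl\cdot\QQ\normal$ --- precisely the $9$ linear combinations of these edge and vertex quantities dual to the $9$ (r)HCT boundary degrees of freedom. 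One then checks that the resulting $18$ functionals are linearly independent on $\PP^4(T)^{2\times2}\cap\LL^2_\mathrm{sym}(T)$, i.e., that $\QQ$ can be mapped onto any prescribed list of these $18$ numbers; granting this, we prescribe the $18$ values produced by $\QQ$ and fix a complementary set of $27$ degrees of freedom to zero. This determines $\Pi^{\dDiv}_T\QQ$ uniquely and linearly, and the three identities hold by construction.

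For stability I would split $\norm{\cdot}{\HdivDivset{T}}$ into its two pieces. Since $\div\Div\Pi^{\dDiv}_T\QQ\in\PP^2(T)$, the third identity forces $\div\Div\Pi^{\dDiv}_T\QQ=\Pi^2(\div\Div\QQ)$ (the elementwise $L^2$-projection), whence $\norm{\div\Div\Pi^{\dDiv}_T\QQ}{T}\le\norm{\div\Div\QQ}{T}$ with constant $1$ and no geometric dependence. For the $L^2$-part I would pass to a reference configuration: after rescaling $T$ to diameter $\simeq1$, the admissible triangles form a compact family on which $\Pi^{\dDiv}_T$ depends continuously, so it suffices to bound $\norm{\Pi^{\dDiv}_T\QQ}{T}$ there; but on such $T$ every defining functional of $\Pi^{\dDiv}_T\QQ$ is bounded by $\norm{\QQ}{\HdivDivset{T}}$ --- the bulk moments by Cauchy--Schwarz, the boundary trace moments via $|\dual{\trHtwo_T u}{\QQ}_{\partial T}|\le\norm{u}{H^2(T)}\norm{\QQ}{\HdivDivset{T}}$ and the trace theory for $\HdivDivset{T}$ from~\cite{KLove1} --- so finite-dimensionality of the target space gives $\norm{\Pi^{\dDiv}_T\QQ}{T}\lesssim\norm{\QQ}{\HdivDivset{T}}$. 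The residual mismatch between the scalings of the $L^2$- and $\div\Div$-parts under the dilation back to $T$ is harmless because $h_T\lesssim\diam\Omega$ absorbs the extra factor into the generic constant.

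The main obstacle is the unisolvence step of the second paragraph: verifying that the $9$ boundary trace functionals --- which see only the $9$ (r)HCT-dual combinations of the $21$ edge and vertex moments listed above --- together with the $3$ constant-matrix moments and the $6$ moments against $\div\Div$ are linearly independent over $\PP^4(T)^{2\times2}\cap\LL^2_\mathrm{sym}(T)$. I would verify this by exhibiting, for a spanning family of these $18$ functionals, matrix fields in $\PP^4(T)^{2\times2}\cap\LL^2_\mathrm{sym}(T)$ picked off by exactly one of them (a partial dual basis), reducing the computation to the reference triangle by affine equivalence. A secondary, milder technicality is keeping track of the anisotropic scaling of $\norm{\cdot}{\HdivDivset{T}}$, which is exactly why the $\div\Div$- and $L^2$-contributions to the stability estimate must be handled separately as above.
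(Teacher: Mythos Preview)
The paper does not prove this lemma at all; it is quoted directly from \cite[Lemma~16]{KLove2} and no argument is supplied beyond the citation. Your elementwise moment-matching construction together with a scaling argument for stability is the standard route for such Fortin operators and is certainly in the spirit of the cited reference.

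There is, however, a real gap in your unisolvence step. The $18$ functionals you list are \emph{not} linearly independent, not even on $\PP^4(T)^{2\times2}\cap\LL^2_\mathrm{sym}(T)$. For every $u\in\PP^2(T)$ one has $\trHtwo_T u\in\widehat U_T$ (the boundary data of a quadratic polynomial lie in the rHCT trace class), and the very definition of the trace operator gives, for all $\QQ\in\HdivDivset{T}$,
\begin{align*}
\dual{\trHtwo_T u}{\QQ}_{\partial T}=\ip{u}{\div\Div\QQ}_T-\ip{D^2 u}{\QQ}_T.
\end{align*}
Thus each such trace functional is already a linear combination of one of your six $\div\Div$-moments and one of your three constant-matrix moments. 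Since $\trHtwo_T$ is injective on $\PP^2(T)$, this yields six independent linear relations among your $18$ conditions, so at most $12$ of them are independent. Your sentence ``one then checks that the resulting $18$ functionals are linearly independent'' therefore cannot be carried out as written, and the partial-dual-basis verification you propose would fail.

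This does not destroy the construction: the $18$ target values, coming from $\QQ$ itself, automatically satisfy the same six relations, so the moment system is consistent; what one actually needs is that the remaining $12$ independent functionals are surjective from the $45$-dimensional target space, and then a complementary set of $33$ (not $27$) degrees of freedom is fixed. But your argument must be repaired along these lines before it goes through. The stability portion of your sketch, separating the $\div\Div$-part (handled exactly via the $\PP^2$ projection) from the $L^2$-part (handled by scaling), is fine.
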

It immediately follows:
\begin{corollary}
  The operator $\Pi^{\dDiv}\colon Q\to Q_h$ (Lemma~\ref{lem:fortin}) is a Fortin operator for the problem from Section~\ref{sec:dpglsqmethod},
  i.e.,~\eqref{eq:fortin:lsq} is satisfied with $\Pi_F=\Pi^{\dDiv}$ and $Q_h$, $U_h$ as defined in this section.
\end{corollary}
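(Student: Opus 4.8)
The plan is to unwind the definition of the bilinear form $c(\cdot,\cdot)$ from~\eqref{eq:defC} and match each of its three terms against one of the three moment conditions collected in Lemma~\ref{lem:fortin}. First I would fix $\uu_h=(u_h,\MM_h,\widehat\uu_h)\in U_h$ and $\QQ\in Q$, abbreviate the projection error as $\QQ-\Pi^{\dDiv}\QQ$, and compute
\begin{align*}
  c(\uu_h,\QQ)-c(\uu_h,\Pi^{\dDiv}\QQ)
  = \ip{u_h}{-\dDiv(\QQ-\Pi^{\dDiv}\QQ)}_\TT + \ip{\MM_h}{\QQ-\Pi^{\dDiv}\QQ} + \dual{\widehat\uu_h}{\QQ-\Pi^{\dDiv}\QQ}_\cS.
\end{align*}

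Then I would argue that each summand vanishes. Since $U_h$ uses the lowest-order trial space, $u_h\in\PP^0(\TT)\subset\PP^2(\TT)$, so the first term vanishes by the third identity of Lemma~\ref{lem:fortin} (recalling $\dDiv=\div\Div$). The component $\MM_h$ lies in $\PP^0(\TT)^{2\times2}\cap\LL^2_\mathrm{sym}(\Omega)$, which is precisely the space appearing in the second identity of Lemma~\ref{lem:fortin}, so the second term vanishes. Finally $\widehat\uu_h\in\widehat U_h$, hence the first identity of Lemma~\ref{lem:fortin} annihilates the third term. Thus $c(\uu_h,\QQ)=c(\uu_h,\Pi^{\dDiv}\QQ)$ for all $\uu_h\in U_h$, $\QQ\in Q$, which is the first half of~\eqref{eq:fortin:lsq}.

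The boundedness requirement in~\eqref{eq:fortin:lsq} is nothing but the \enquote{moreover} estimate $\norm{\Pi^{\dDiv}\QQ}{\HdivDivset\TT}\lesssim\norm{\QQ}{\HdivDivset\TT}$ already contained in Lemma~\ref{lem:fortin}, which supplies the constant $C_F$. There is no real obstacle here: the statement is an immediate bookkeeping corollary of Lemma~\ref{lem:fortin}, the only points deserving a word of justification being the inclusion $\PP^0(\TT)\subset\PP^2(\TT)$ and the identification $\dDiv=\div\Div$; the genuine work, namely the construction of $\Pi^{\dDiv}$ together with the verification of the three moment conditions and the stability bound, has been carried out in~\cite{KLove2}.
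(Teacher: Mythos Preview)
Your argument is correct and is exactly the verification the paper leaves implicit with the words ``It immediately follows'': you split $c(\uu_h,\QQ)-c(\uu_h,\Pi^{\dDiv}\QQ)$ into its three summands and kill each with the corresponding identity from Lemma~\ref{lem:fortin}, then invoke the boundedness estimate for $C_F$. There is nothing to add.
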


Define the discrete test space
\begin{align*}
  V_{hp} := \PP^p(\TT) \times Q_h.
\end{align*}

\begin{theorem}\label{thm:fortin}
  Suppose that $A\in \PP^p(\TT)^{d\times d}\cap\LL^2_\mathrm{sym}(\Omega)$. 
  Then, $\Pi_F = (\Pi^p,\Pi^{\dDiv})\colon V\to V_{hp}$ is a Fortin operator for the problem from
  Section~\ref{sec:dpgmethod}, i.e.,~\eqref{eq:fortin} is satisfied.
\end{theorem}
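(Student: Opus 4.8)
The plan is to verify the two defining properties of a Fortin operator in~\eqref{eq:fortin} for the candidate $\Pi_F = (\Pi^p,\Pi^{\dDiv})$. Boundedness is immediate: $\Pi^p$ is the $L^2$-projection, hence $\norm{\Pi^p v}{}\leq\norm{v}{}$, while the bound $\norm{\Pi^{\dDiv}\QQ}{\HdivDivset\TT}\lesssim\norm{\QQ}{\HdivDivset\TT}$ is part of Lemma~\ref{lem:fortin}; combining these two gives $\norm{\Pi_F\vv}{V}\leq C_F\norm{\vv}{V}$ with $C_F$ absorbing the constant from Lemma~\ref{lem:fortin}. So the real content is the commutation property $b(\uu_h,\Pi_F\vv) = b(\uu_h,\vv)$ for all $\uu_h\in U_h$, $\vv=(v,\QQ)\in V$.

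To check the commutation property I would expand $b$ on the difference. Writing $\uu_h=(u_h,\MM_h,\widehat\uu_h)$ with $u_h\in\PP^0(\TT)$, $\MM_h\in\PP^0(\TT)^{2\times2}\cap\LL^2_\mathrm{sym}(\Omega)$, and $\widehat\uu_h\in\widehat U_h$, and using the definition of the bilinear form,
\begin{align*}
  b(\uu_h,\vv)-b(\uu_h,\Pi_F\vv)
  &= \ip{u_h}{-\dDiv(\QQ-\Pi^{\dDiv}\QQ)}_\TT
   + \ip{\MM_h}{A(v-\Pi^p v)+(\QQ-\Pi^{\dDiv}\QQ)} \\
  &\quad + \dual{\widehat\uu_h}{\QQ-\Pi^{\dDiv}\QQ}_\cS.
\end{align*}
Now I would dispatch the terms one by one. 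The first term vanishes because $u_h\in\PP^0(\TT)\subset\PP^2(\TT)$ and the third property of Lemma~\ref{lem:fortin} says $\Pi^{\dDiv}$ preserves $\ip{u}{\div\Div(\cdot)}_\TT$ against $\PP^2(\TT)$. The term $\ip{\MM_h}{\QQ-\Pi^{\dDiv}\QQ}$ vanishes by the second property of Lemma~\ref{lem:fortin} since $\MM_h\in\PP^0(\TT)^{2\times2}\cap\LL^2_\mathrm{sym}(\Omega)$. The trace term $\dual{\widehat\uu_h}{\QQ-\Pi^{\dDiv}\QQ}_\cS$ vanishes by the first property of Lemma~\ref{lem:fortin}, since $\widehat\uu_h\in\widehat U_h$. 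The remaining term is $\ip{\MM_h}{A(v-\Pi^p v)}$; here I would use the symmetry of $A$ and rewrite it as $\ip{A\MM_h}{v-\Pi^p v}$ (Frobenius-product pairing), and since $A\in\PP^p(\TT)^{d\times d}\cap\LL^2_\mathrm{sym}(\Omega)$ and $\MM_h\in\PP^0(\TT)^{2\times2}$, the product $A:\MM_h$ is elementwise in $\PP^p(T)$, so it is orthogonal to $v-\Pi^p v$ by the defining property of the $L^2$-projection $\Pi^p$ onto $\PP^p(\TT)$. Hence every term vanishes and the commutation property holds.

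The step I expect to require the most care is precisely this last term: it is the only place where the hypothesis $A\in\PP^p(\TT)^{d\times d}\cap\LL^2_\mathrm{sym}(\Omega)$ is used, and one must be careful that it is $A:\MM_h$ (not $\MM_h$ alone) whose polynomial degree must be controlled — the product of a degree-$p$ matrix field with a piecewise constant matrix is piecewise degree $p$, which is exactly why the test space is chosen as $\PP^p(\TT)$ in the first component of $V_{hp}$. All other terms reduce directly to the three identities already established in Lemma~\ref{lem:fortin}, so no further estimates are needed.
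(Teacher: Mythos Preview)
Your proposal is correct and follows essentially the same approach as the paper: you verify boundedness via $\norm{\Pi^p v}{}\leq\norm{v}{}$ and Lemma~\ref{lem:fortin}, and then check the commutation property by observing that $A:\MM_h\in\PP^p(\TT)$ (the only place the hypothesis on $A$ enters) together with the three identities of Lemma~\ref{lem:fortin}. The paper's proof is terser but identical in substance; your term-by-term expansion of $b(\uu_h,\vv)-b(\uu_h,\Pi_F\vv)$ just makes explicit what the paper summarizes in one line.
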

\begin{proof}
  Since $\MM_h \in \PP^0(\TT)^{d\times d}$ this shows that $A:\MM_h \in \PP^p(\TT)$ and therefore
  \begin{align*}
    \ip{A:\MM_h}{\Pi^pv} = \ip{A:\MM_h}v \quad
    \forall \MM_h\in \PP^0(\TT)\cap \LL^2_\mathrm{sym}(\Omega), \, 
    v\in L^2(\Omega).
  \end{align*}
  Moreover, $\norm{\Pi^p v}{}\leq \norm{v}{}$.
  Together with Lemma~\ref{lem:fortin} we conclude that
  \begin{align*}
    b(\uu_h,\Pi_F \vv) = b(\uu_h,\vv) \quad\forall \vv\in V
  \end{align*}
  and $\norm{\Pi_F \vv}{V} \lesssim \norm{\vv}V$ which finishes the proof.
\end{proof}

\section{Numerical Studies}\label{sec:examples}
In this section we present several numerical experiments showing the performance of our proposed methods.
Throughout we consider the computational domain $\Omega = (-1,1)^2$ and the initial triangulation is shown in
Figure~\ref{fig:mesh}.
Throughout we use the test space $V_{h0}$ and $Q_h$ defined in Section~\ref{sec:fortin} for the methods from
Section~\ref{sec:dpgmethod} and Section~\ref{sec:dpglsqmethod}, respectively.
For both methods we use the trial space $U_h$ defined above for the examples from
Section~\ref{sec:ex1}---\ref{sec:ex2}.
In Section~\ref{sec:ex4} we consider the augmented trial space
\begin{align*}
  U_h^+ = \PP^1(\TT) \times \big(\PP^0(\TT)^{2\times 2}\cap \LL^2_\mathrm{sym}(\Omega) \big) \times \widehat U_h.
\end{align*}
This example gives numerical evidence that higher convergence rates are possible for one solution component by increasing the polynomial degree.

\begin{figure}
  \begin{center}
    \includegraphics{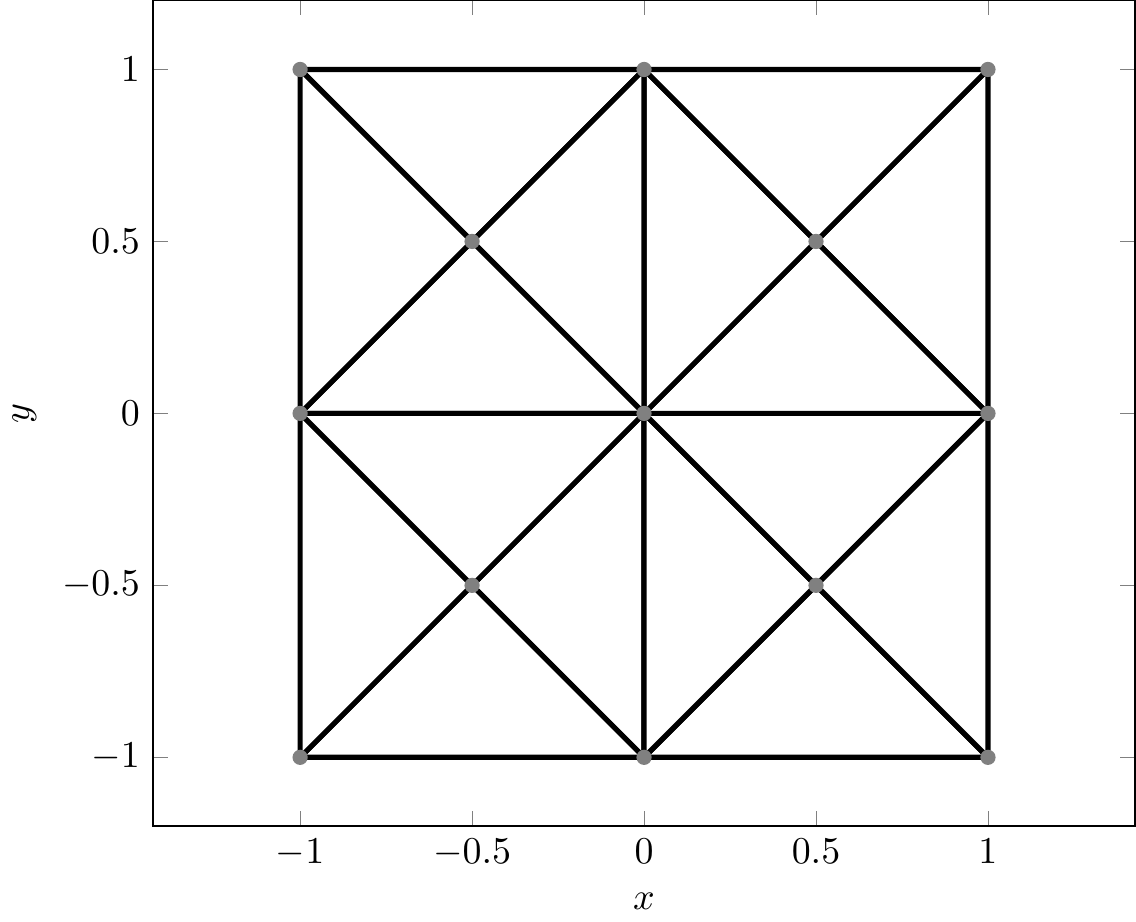}
  \end{center}
  \caption{Initial triangulation of $\Omega = (-1,1)^2$ with $16$ elements.}
  \label{fig:mesh}
\end{figure}

\subsection{Example with regular solution}\label{sec:ex1}
We consider the problem from~\cite[Section~6.1]{SmearsSueli13}, where
\begin{align*}
  A = A(x,y) = \begin{pmatrix}
    2 & \sign(xy) \\
    \sign(xy) & 2
  \end{pmatrix}
\end{align*}
and $f$ is chosen such that
\begin{align*}
  u(x,y) = \left( x e^{1-|x|}-x\right)\left(ye^{1-|y|}-y\right)
\end{align*}
is the exact solution of problem~\ref{eq:model}.
Note that $A$ satisfies the Cordes condition with $\varepsilon = 3/5$.
\begin{figure}
  \begin{center}
    \includegraphics{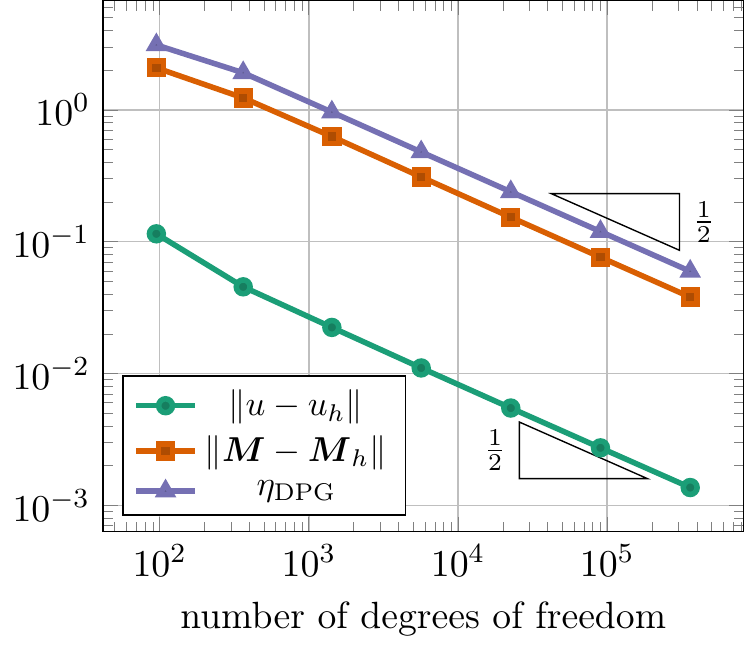}
    \includegraphics{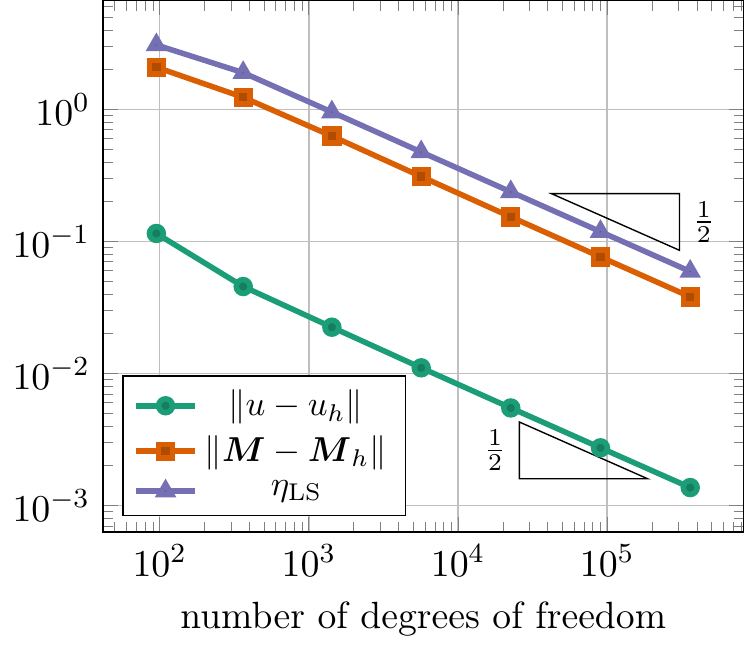}
  \end{center}
  \caption{Errors of field variables and estimators for the example from Section~\ref{sec:ex1}.}
  \label{fig:ex1}
\end{figure}
Figure~\ref{fig:ex1} shows the $L^2$ errors of the field variables compared to the error estimator.
The left plot shows the results for the method from Section~\ref{sec:dpgmethod} and the right plot the results for the method
from Section~\ref{sec:dpglsqmethod}.
Note that the discontinuities of $A$ are aligned with the initial mesh and that the coefficients of $A$ are constant on
each element. Since the two methods are equivalent we expect the same error curves which is also observed in
Figure~\ref{fig:ex1}.

\subsection{Example with known singular solution}\label{sec:exSing}
We use the same coefficient matrix as in Section~\ref{sec:ex1} and use the manufactured solution
\begin{align*}
  u(x,y) = (x^2+y^2)^{5/6}.
\end{align*}
The right-hand side data is then computed using~\eqref{eq:model}. We stress that $u$ does not satisfy the homogeneous
boundary conditions. We implemented the inhomogeneous boundary condition by lifting an approximation of $u|_\Gamma$,
see~\cite{DPGoverview}.

Note that $u\in H^{2+2/3-\delta}(\Omega)$ for all $\delta>0$. We therefore expect that uniform mesh-refinements lead to
suboptimal rates.
This can be observed in Figure~\ref{fig:exSing} which shows the error curves and the estimator for the method
from Section~\ref{sec:dpgmethod}.
Using an adaptive strategy the optimal rates are recovered, see again Figure~\ref{fig:exSing}.
A sequence of adaptive meshes created by the adaptive algorithm is visualized in Figure~\ref{fig:exSing:Mesh}. 
We observe strong refinements towards the ``singular'' node $(x,y) = (0,0)$.

\begin{figure}
  \begin{center}
    \includegraphics{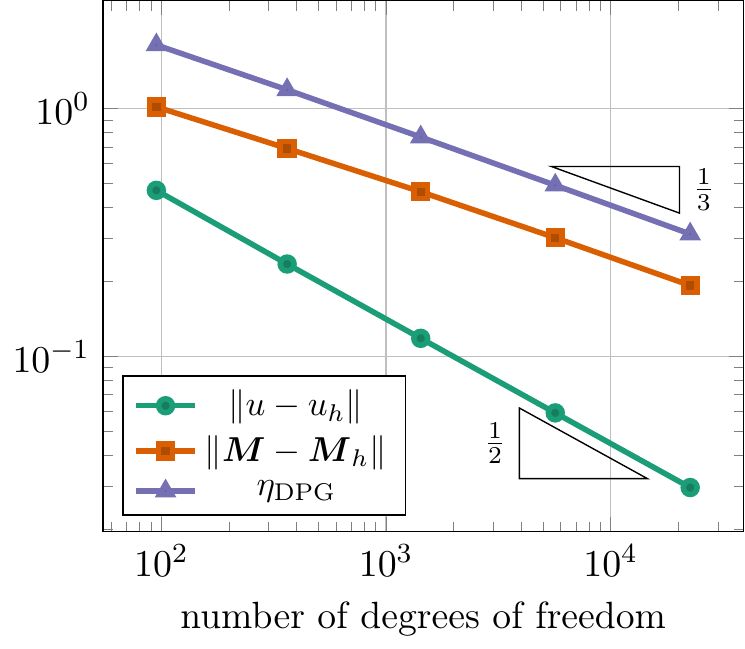}
    \includegraphics{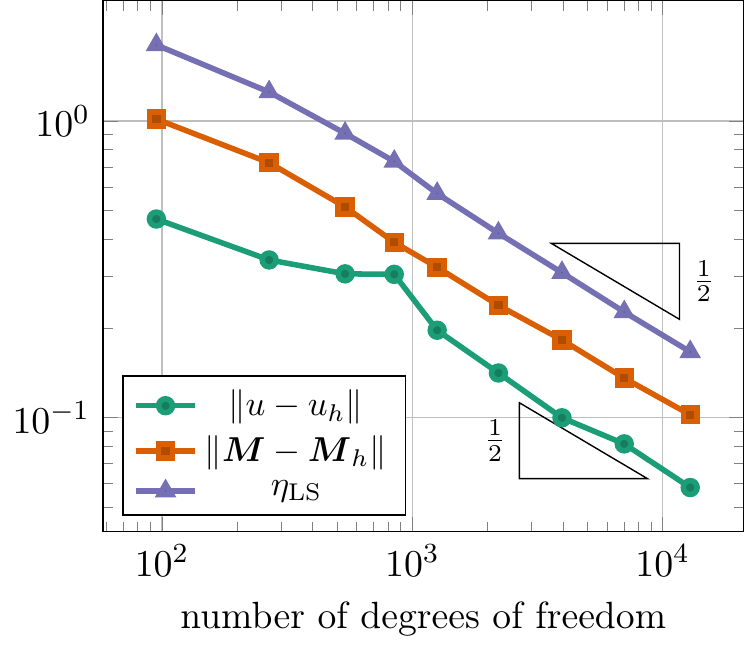}
  \end{center}
  \caption{$L^2$ field errors and error estimator for the example from Section~\ref{sec:exSing} The left plot shows the
  results on a sequence of uniform refinements and the right plot shows the results in the case of adaptive refinements.}
  \label{fig:exSing}
\end{figure}

\begin{figure}
  \begin{center}
    \includegraphics[width=0.35\textwidth]{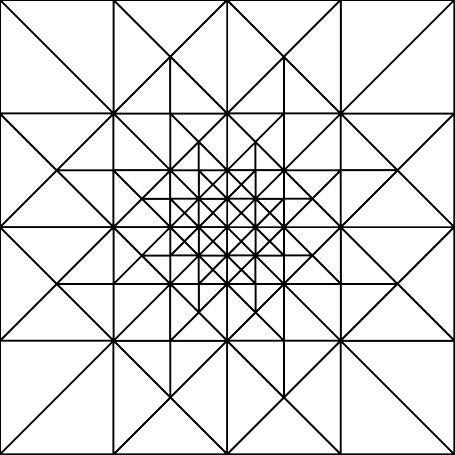}
    \includegraphics[width=0.35\textwidth]{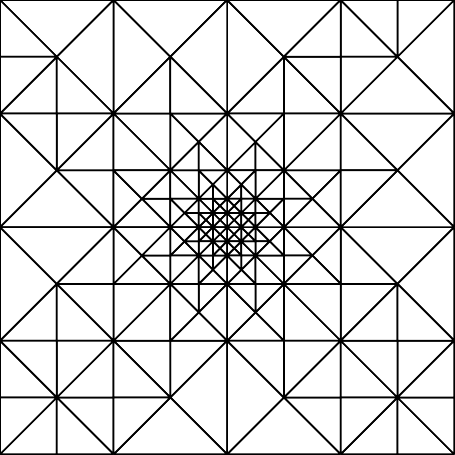}
    \includegraphics[width=0.35\textwidth]{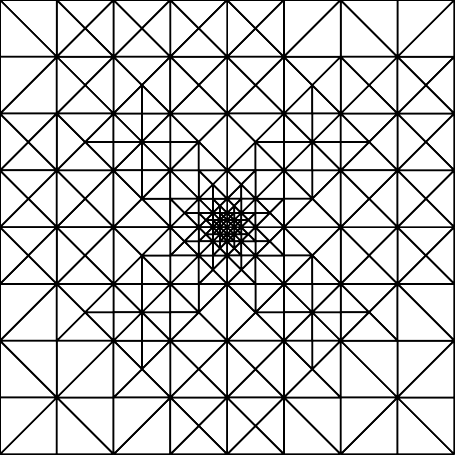}
    \includegraphics[width=0.35\textwidth]{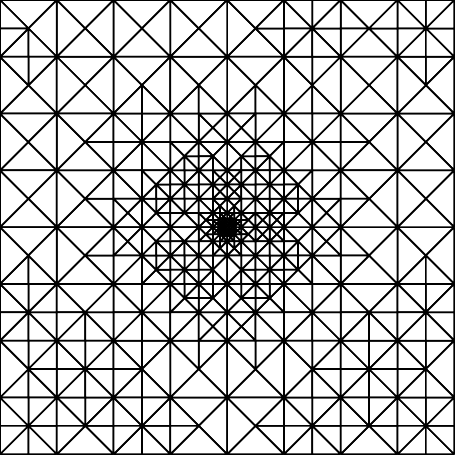}
  \end{center}
  \caption{Meshes generated by adaptive algorithm for the example from Section~\ref{sec:exSing}.}
  \label{fig:exSing:Mesh}
\end{figure}

\subsection{Example with unknown solution}\label{sec:ex2}
In this example we consider the solution to~\eqref{eq:model} with right-hand side $f=1$ and coefficient matrix
\begin{align*}
  A = A(x,y) = \begin{pmatrix}
  2 & g(x,y)\sign(xy) \\ g(x,y)\sign(xy) & 2 \end{pmatrix},
\end{align*}
where 
\begin{align*}
  g(x,y) = \begin{cases}
    1 & \text{if } 0\leq \sqrt{x^2+y^2} <1/3, \\
    -1& \text{if } 1/3 < \sqrt{x^2+y^2} <2/3, \\
    0 & \text{else}
  \end{cases}.
\end{align*}
In this case an exact solution is not known. 
As can be observed from Figure~\ref{fig:ex2} uniform mesh-refinement does not lead to the optimal orders of
convergence.
We therefore consider an adaptive algorithm where mesh-refinement is steered by the local mesh-indicators.
Figure~\ref{fig:ex2} shows the estimators for the method from
Section~\ref{sec:dpgmethod} and Section~\ref{sec:dpglsqmethod}. Note that the discontinuities of the coefficients are
not aligned with the mesh. Therefore, both methods are not equivalent (Theorem~\ref{thm:fortin} does not hold)
and we expect that different approximations are obtained which is also observed in Figure~\ref{fig:ex2}.

\begin{figure}
  \begin{center}
    \includegraphics{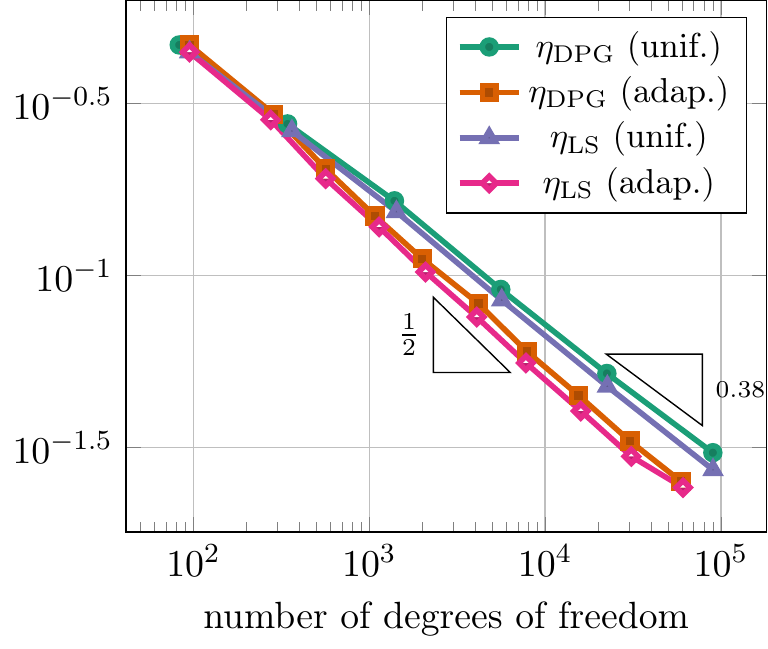}
  \end{center}
  \caption{Energy error for the example from Section~\ref{sec:ex2}.}
  \label{fig:ex2}
\end{figure}

\begin{figure}
  \begin{center}
    \includegraphics[width=0.3\textwidth]{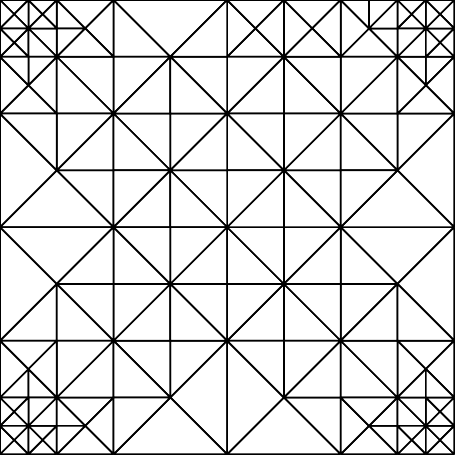}
    \includegraphics[width=0.45\textwidth]{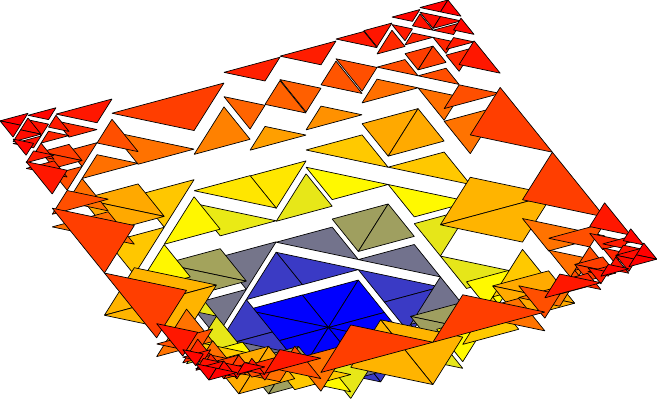}
    \includegraphics[width=0.3\textwidth]{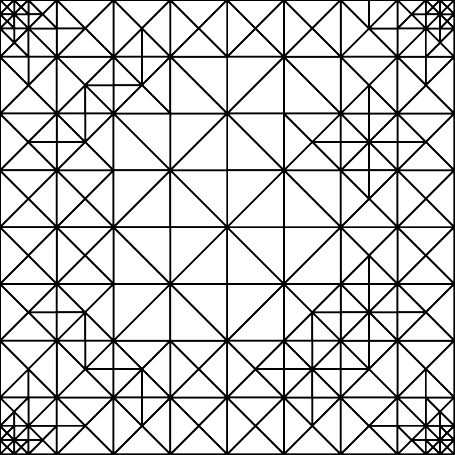}
    \includegraphics[width=0.45\textwidth]{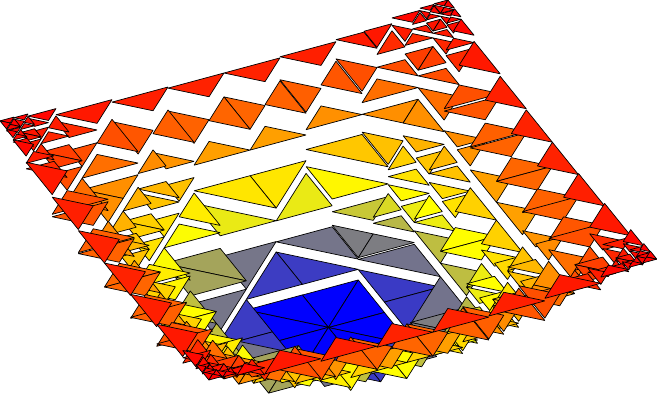}
    \includegraphics[width=0.3\textwidth]{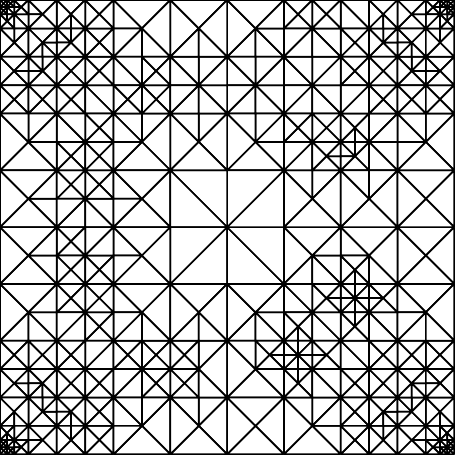}
    \includegraphics[width=0.45\textwidth]{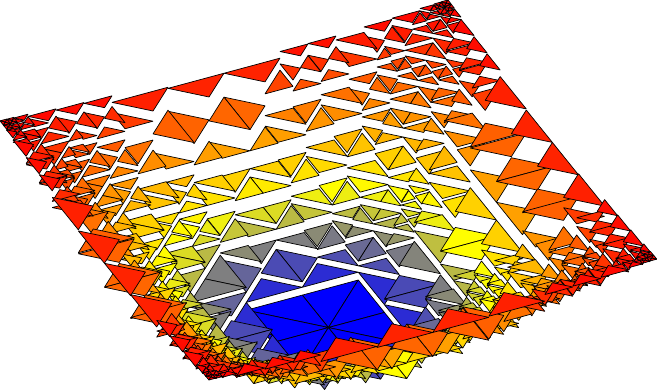}
    \includegraphics[width=0.3\textwidth]{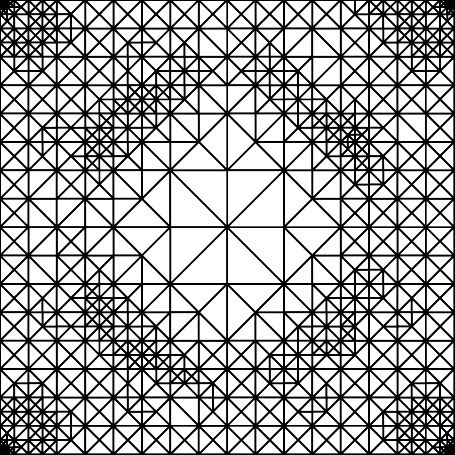}
    \includegraphics[width=0.45\textwidth]{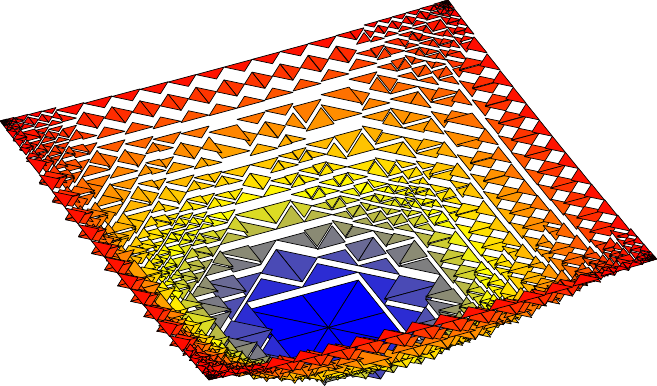}
  \end{center}
  \caption{Sequence of adaptively generated meshes and corresponding solution component $u_h$
  for the example from Section~\ref{sec:ex2}.}
  \label{fig:ex2:sol}
\end{figure}

\begin{figure}
  \begin{center}
    \includegraphics[width=0.3\textwidth]{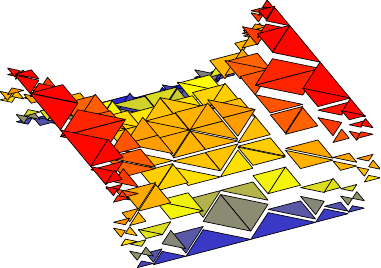}
    \includegraphics[width=0.3\textwidth]{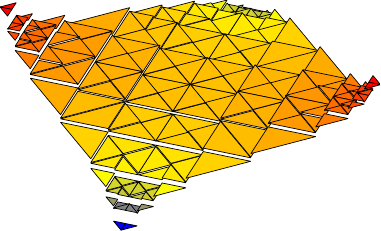}
    \includegraphics[width=0.3\textwidth]{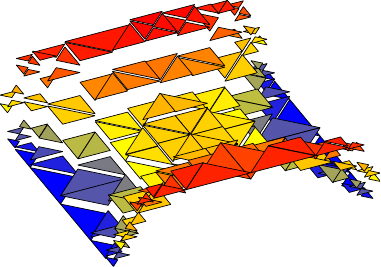}
    \includegraphics[width=0.3\textwidth]{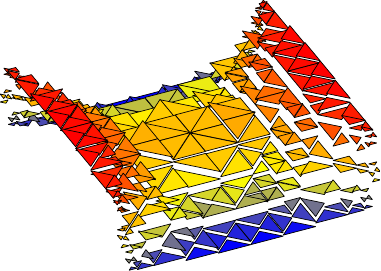}
    \includegraphics[width=0.3\textwidth]{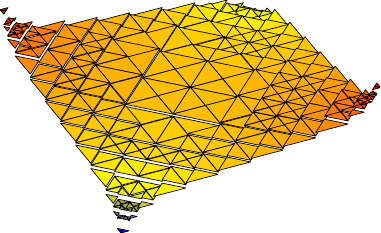}
    \includegraphics[width=0.3\textwidth]{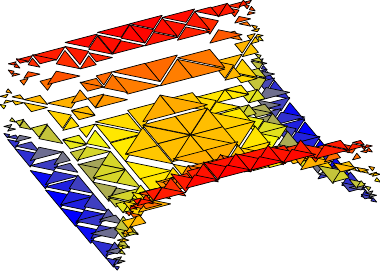}
    \includegraphics[width=0.3\textwidth]{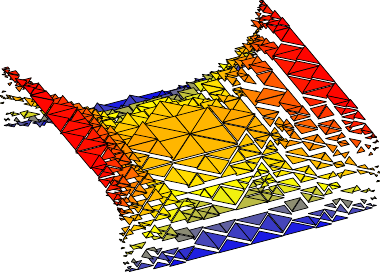}
    \includegraphics[width=0.3\textwidth]{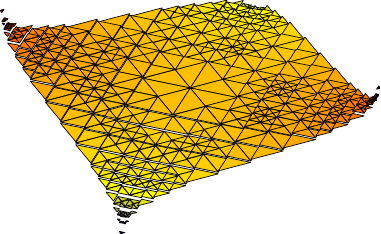}
    \includegraphics[width=0.3\textwidth]{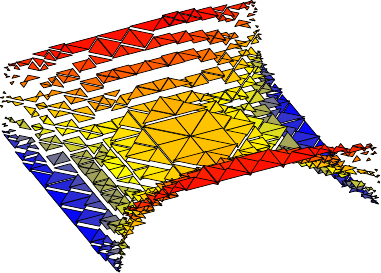}
    \includegraphics[width=0.3\textwidth]{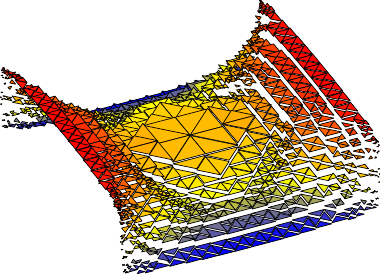}
    \includegraphics[width=0.3\textwidth]{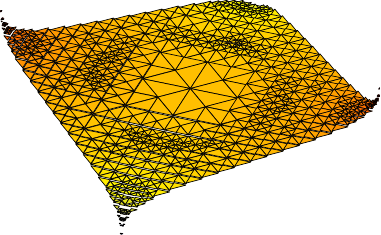}
    \includegraphics[width=0.3\textwidth]{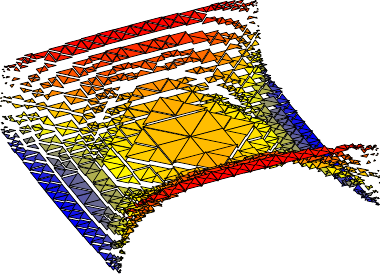}
  \end{center}
  \caption{Approximation of the Hessian $\MM = D^2 u$, i.e., $\MM_{h,11}$ (left), $\MM_{h,12}$ (middle), $\MM_{h,22}$
  (right) on a sequence of adaptively generated meshes for the problem from Section~\ref{sec:ex2}.}
  \label{fig:ex2:M}
\end{figure}

Figure~\ref{fig:ex2:sol} shows a sequence of meshes generated by the adaptive algorithm with corresponding solution
component $u_h$ (computed with the method from Section~\ref{sec:dpgmethod}). Figure~\ref{fig:ex2:M} visualizes the
corresponding approximations of the Hessian.

\subsection{Example with regular solution using augmented trial space}\label{sec:ex4}

\begin{figure}
  \begin{center}
    \includegraphics{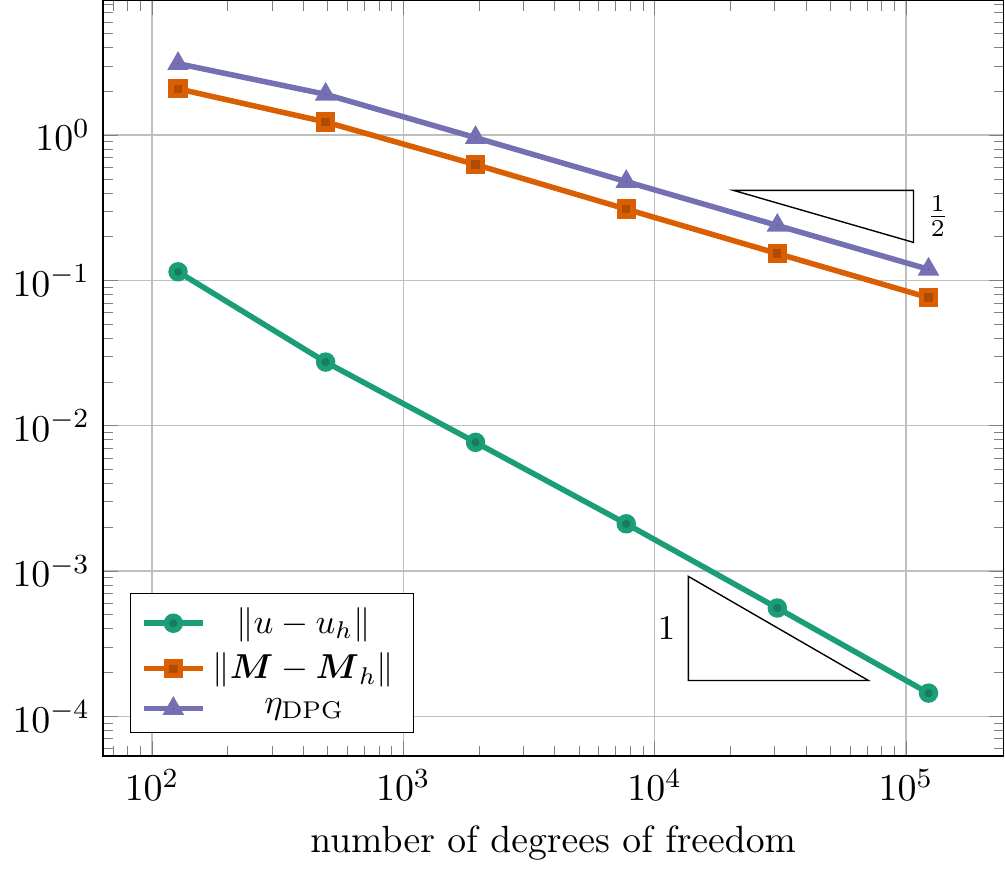}
  \end{center}
  \caption{Errors of field variables and estimators for the example from Section~\ref{sec:ex4}.}
  \label{fig:ex4}
\end{figure}

We consider the problem from Section~\ref{sec:ex1} but instead of seeking the approximation $\uu_h$ in $U_h$ we consider
the augmented trial space
\begin{align*}
  U_h^+ = \PP^1(\TT) \times \big( \PP^0(\TT)^{2\times 2}\cap \LL^2_\mathrm{sym}(\Omega) \big) \times \widehat U_h.
\end{align*}
The idea of using augmented trial spaces in DPG methods based on ultraweak formulations stems from the author's recent
works~\cite{SupConv1,SupConv2}.
There it was shown, under some standard regularity assumptions, that the use of augmented trial spaces leads to higher
convergence rates.
Figure~\ref{fig:ex4} gives numerical evidence that higher convergence rates are possible for the problem under
consideration.

%===================================================================================================
\bibliographystyle{abbrv}
\bibliography{literature}
%===================================================================================================

\end{document}